\def\Projan{\mathop{\rm Projan}}
\newtheorem{theorem}{Theorem}[section]
\newtheorem{exa}[theorem]{Example}
\newtheorem{rem}[theorem]{Remark}
\newtheorem{cor}[theorem]{Corollary}
\newtheorem{tont}[theorem]{Definition}
\newenvironment{definition}{\begin{tont} \em}{\end{tont}}
\newtheorem{lem}[theorem]{Lemma}
\newenvironment{Lemme}{\begin{lem} \em}{\end{lem}}
\newtheorem{Prop}[theorem]{Proposition}
\let\To=\longrightarrow
\newcommand{\Mat}{Mat}
\newcommand{\cO}{{\mathcal O}}
\newcommand{\cR}{{\mathcal R}}
\newcommand{\cX}{{\mathcal X}}
\def\Projan{\mathop{\rm Projan}}
\def\Af{\hbox{\rm A$_f$\hskip 2pt}}
\def\AF{\hbox{\rm A$_F$\hskip 2pt}}
\def\Wf{\hbox{\rm W$_f$\hskip 2pt}}
\def\WF{\hbox{\rm W$_F$\hskip 2pt}}
\begin{document}

\title [Determinantal singularities]{Pairs of modules and determinantal isolated singularities }

\author[T. Gaffney]{Terence Gaffney}\thanks{T.~Gaffney was partially supported by PVE-CNPq Proc. 401565/2014-9}
 \address{T. Gaffney, Department of Mathematics\\
  Northeastern University\\
  Boston, MA 02215}

\author[A. Rangachev]{Antoni Rangachev}
\address {A. Rangachev, Department of Mathematics\\
  Northeastern University\\
  Boston, MA 02215 \\
and Department of Analysis, Geometry and Topology, Institute of Mathematics \\
and Informatics, Bulgarian Academy of Sciences\\
Akad. G. Bonchev, Sofia 1113, Bulgaria}

\begin{abstract}
We continue the development of the study of the equisingularity of isolated singularities, in the determinantal case.
\end{abstract}

\maketitle

\selectlanguage{english}

\section{Introduction}

This paper studies the Whitney equisingularity of  families of determinantal singularities. In the study of equisingularity of sets we are given a family of sets (or mappings) and we want to find invariants which depend only on the members of the family, whose independence of parameter ensures that an equisingularity condition holds. Successful examples include hypersurfaces (\cite {T_C}) and complete intersections with isolated singularities (\cite{G-2}). Smoothable determinantal singularities include these two classes of singularities and can be regarded as the next class in order of complexity, as we shall see.

In the case of isolated hypersurface and complete intersection singularities, the invariant which is related to the infinitesimal geometry of the set has a geometric interpretation in terms of the generic element to which the object deforms. In the case of germs of functions with an isolated critical point,  the generic element is a function with only Morse critical points, and the number of such points is the Milnor number--the multiplicity of the Jacobian ideal. In turn, this number is the degree of the exceptional divisor of the blowup of the ambient space by the Jacobian ideal. 

If the object is a hypersurface $X$ with an isolated singularity then the generic element is a smoothing, and the basic infinitesimal invariant is sum of the Milnor number of $X$ and of a generic hyperplane slice of $X$. This is also the multiplicity of the Jacobian ideal in the local ring of $X,0$.

In the study of determinantal singularities, the desire for invariants depending only on the members of the family and the desire to preserve the connection between the invariants of the infinitesimal geometry and those related to the geometry of the generic element to which the singularity deforms, leads us to introduce the notion of the {\it landscape} of a singularity. 

Choosing the  landscape of a singularity $X$ consists of defining the allowable families that include the set, and its generic perturbations. Each set should have a unique generic element that it deforms to.  There should exist a connection between invariants related to  the infinitesimal geometry of $X$, and some elements of the topology of this generic element.  Describing the connection between  the infinitesimal geometry of $X$ and the topology of the generic element related to $X$ is part of understanding the landscape. 

In studying the equisingularity of a family of isolated singularities, choosing the landscape can be done by fixing in advance a component of the base space of the versal deformation, from which the given family is induced. This can be done explicitly or implicitly as in our first case where we restrict to determinantal deformations. This has the effect of fixing the generic fiber of the versal deformation to which all members of our family can be deformed. Invariants associated with the geometry/topology of this general member  provide important information about the singularities in the original family.

As the example in section 6 shows, for the same singularity $X$, there can be two different choices of landscape. In the example, for each choice of landscape there is a Whitney equisingular family which contains $X$. The invariants of $X$ which control the Whitney equisingularity of the family depend on the choice of landscape.  Each choice of landscape gives a different generic object to which $X$ deforms, and their differing topology accounts for the differing infinitesimal invariants.

In an earlier paper the first author introduced a framework for studying the equisingularity of families of isolated singularities using the multiplicity of pairs of modules and their polar curves (\cite{Gaff1}). The Jacobian module of $X$, $JM(X)$, which is the module generated by the partial derivatives of a set of defining equations for $X$, is one of the elements of the pair as it relates well to the Whitney conditions. 

The choice of  allowable deformations determines the corresponding first order infinitesimal deformations of  $X$. These make up a module $N(X)$, which is the second and larger module in the pair.   For the case of sets, the invariants we need for checking condition W come from the pair $(JM(X), N(X))$ and $N(X)$ by itself. A change at the infinitesimal level of the family is always tied to a change in topology of the generic related elements. 

 Recall that a determinantal singularity is one whose ideal is generated by the minors of a  matrix, where the singularity so defined has the expected dimension. A determinantal singularity is a maximal rank singularity if the order of the minors is maximal for the given matrix. This type of singularity contains all ICIS singularities, and is the type studied in this paper. The allowable deformations of $X$ are those induced by a deformation of the presentation matrix of $X$. We choose the size of the matrix and the dimension of the ambient space ${\mathbb C}^q$ so that $X$ is smoothable, hence the generic element to which $X$ deforms is a smooth manifold.

Because they are given by a presentation matrix, we can add some more structure to the study of determinantal singularities.  Determinantal singularities are sections of stable singularities.  We say a singularity $S$ is {\it stable} if the Jacobian module of $S$ agrees with its module of infinitesimal deformations. We show that  the singularities $\overline{\Sigma}_r$, the matrices in $Hom ({\mathbb C}^n, {\mathbb C}^{n+k})$ of kernel rank $r$ or more are stable. 

A determinantal singularity $X$, with presentation matrix $M_X$, can be viewed as the intersection of the graph of $M_X$, seen as a map from ${\mathbb C}^q$ to $Hom ({\mathbb C}^n, {\mathbb C}^{n+k})$, with ${\mathbb C}^q\times \overline{\Sigma}_r$. It is in this sense that they are sections of stable singularities.

We are interested in the case where the section has a smoothing by varying the section of $S$. The invariant of interest from our framework for equisingularity purposes, is the sum of multiplicity of the pair of modules--Jacobian module of $X$ and $N(X)$, the pullback of the infinitesimal deformations of $S$ and the multiplicity of the polar curve of $N(\cX)$, $\cX$ a deformation of $X$ to a smoothing.  Because the larger module in our pair is the pullback of a module on $S$, it is {\it universal}. This means that $N(\cX)$ specializes in families, avoiding the problem that occurs for the choice of $N$ in \cite{Gaff1}, for isolated singularities whose versal deformation space does not have a smooth base.

In the case of ICIS singularities (\cite{G-4}), because we can freely deform the defining equations of our space, $N(X)$ is free, and the multiplicity of the pair becomes the Buchsbaum-Rim multiplicity of the Jacobian module.   Again, because $N(X)$ is free, it has no polar varieties. In the determinantal case, we cannot deform the equations of $X$ freely, however we can deform the entries of the presentation matrix $M_X$ freely. This means that  $N(X)$ is not free and in general, for determinantal singularities, $\Projan {\mathcal R}N(X)$ has nontrivial geometry which enters into the invariants.  This change in the nature of $N(X)$ is the main reason this class of singularities is more complex.

 Combining Theorem 2.5 and Corollary 3.4, we show that the multiplicity of the polar curve of $N(\cX)$ is the intersection number of the image of the section with the polar variety of $\overline{\Sigma}_r$, of complementary dimension. We compute formulas for the polar varieties of $\overline{\Sigma}_r$ for the case of maximal rank singularities. This in turn computes the  multiplicity of the polar curve of $N(\cX)$ in terms of the presentation matrix of $X$. We then use the framework to study equisingularity problems for these singularities. The framework yields invariants which control the Whitney equisingularity type whose definition is independent of the family under study. These invariants are also connected with the topology of the smoothing.

Since the polar varieties of $S$ are important to us, and they are defined using the conormal variety of $S$,  as part of the development of this setting, we give a description of the conormal variety of the stable singularities of which determinantal varieties are sections. These are the sets $\overline{\Sigma}_r$, the matrices in $Hom ({\mathbb C}^n, {\mathbb C}^{n+k})$ of kernel rank $r$ or more. This enables us to show easily that the intersection numbers we need are zero in a range of dimensions because the complementary polars are empty. In this case our invariants simplify to $e(JM(X),N(X))$.

The paper is organized as follows.

In the second section we introduce background material from theory of integral closure of modules, and describe the framework developed in  (\cite{Gaff1}) for proving equisingularity results. We introduce the notion of a stable singularity, and consider the landscape given by sections $M$ of these singularities. We compute the multiplicity of the polar varieties of $N(X)$ as an intersection number of the polar varieties of the stable singularity with the map defining $X$. We prove a result about the conormal spaces of the $\overline{\Sigma}_r$ singularities. We also prove a result showing that the polar variety of codimension $d$ in a family of $d$-dimensional spaces can be controlled using the codimension $d$ polar varieties in a larger family. 

 In the third section we look at this framework for sections of determinantal singularities and introduce the pair of modules we use to control equisingularity. One is the Jacobian module JM(X) of the singularity $X$, the other is the first order infinitesimal determinantal deformations of the singularity. We denote this module by $N(X)$ or $N$ when $X$ is understood.  

To calculate the multiplicity of the polar curve of $N$ in a deformation to a smoothing, it is necessary describe $\Projan \cR(N)$. In the maximal rank case, we do this by showing an equivalence between $\Projan \cR(N)$ and a modification of $X$ based on the presentation matrix of the singularity. This equivalence then gives a decomposition of the multiplicity of the polar of $N$ as a sum of intersection numbers of generic plane sections with the exceptional fiber of the modification. This means we
can compute the  intersection number of the image of the section with the polar variety of $S$ of complementary dimension, as a sum of intersection numbers of modules naturally associated with the singularity; these intersection numbers in turn are the colengths of a collection of ideals. Using this, we give a formula for the codimension $d$ polar multiplicity of the singularity in terms of the multiplicity of the pair and an alternating sum of colengths of ideals.

 In section four we compute these intersection numbers as the alternating sum of intersections of modules which depend only on the presentation matrix, and give an example of a computation for a family of space curves. 

In section 5 we review various equisingularity conditions and describe the consequences for these conditions based on section 4. The framework of section three joined with the results of section 4 give conditions for Whitney equisingularity, and the relative conditions $\Af$ and $\Wf$, which depend only on the presentation matrix of the singularity, and the multiplicity of the  pair of modules corresponding to the condition. 

Section 6  contains the example of a singularity which is a member of two Whitney equisingular families, whose generic elements have topologically distinct smoothings. This example shows that it is impossible to find an invariant which depends only on an analytic space $X$ with an isolated singularity, whose value is independent of parameter for all Whitney equisingular deformations of $X$, and which is determined by the geometry of a smoothing of $X$.

The authors are happy to acknowledge helpful conversations with Steven Kleiman, whose paper \cite{K2} provides a useful result for calculating the multiplicity of a pair of modules in the case of curves, and to Anne Fr\"ubis-Kr\"uger for some background on Tjurina transforms.

\section{The theory of the integral closure of modules, polar varieties and conormal spaces }

Let  $(X, x)$ be a germ of a complex analytic space and $X$ a
small representative of the germ, and let $\mathcal{O}_{X}$ denote the
structure sheaf on a complex analytic space $X$. For simplicity we assume $X$ is equidimensional, and if $M$ is a sheaf of modules on $X$, $M$  a subsheaf of a free sheaf $F$, then $g$ is the generic rank of $M$ on each component of $X$. If we fix a set of generators of a module $M$, then we write $[M]$ for the matrix of generators.

\begin{definition} Suppose $(X, x)$ is the germ of a complex analytic space,
$M$ a submodule of $\mathcal{O}_{X,x}^{p}$. Then $h \in
\mathcal{O}_{X,x}^{p}$ is in the integral closure of $M$, denoted
$\overline{M}$, if for all analytic $\phi : (\mathbb{C}, 0) \to (X,
x)$, $h \circ \phi \in (\phi^{*}M)\mathcal{O}_{1}$. If $M$ is a
submodule of $N$ and $\overline{M} = \overline{N}$ we say that $M$
is a reduction of $N$.
\end{definition}

To check the definition it suffices to check along a finite number of curves whose generic point is in the Zariski open subset of $X$ along which $M$ has maximal rank. (Cf. \cite {G-2}.)

If a module $M$ has finite colength in $\mathcal{O}_{X,x}^{p}$, it
is possible to attach a number to the module, its Buchsbaum-Rim
multiplicity,  $e(M,\mathcal{O}_{X,x}^{p}).$ We can also define the multiplicity $e(M,N)$ of a pair of
modules $M \subset N$, $M$ of finite colength in $N$, as well, even
if $N$ does not have finite colength in $\mathcal{O}_{X}^{p}$.

We recall how to construct the multiplicity of a pair of modules using the approach of
Kleiman and Thorup \cite{KT}. Given a submodule $M$ of a free
$\mathcal{O}_{X^{d}}$ module $F$ of rank $p$, we can associate a
subalgebra $\mathcal{R}(M)$ of the symmetric $\mathcal{O}_{X^{d}}$
algebra on $p$ generators. This is known as the Rees algebra of $M$.
If $(m_{1},\ldots ,m_{p})$ is an element of $M$ then $\sum
m_{i}T_{i}$ is the corresponding element of $\mathcal{R}(M)$. Then
$\Projan(\mathcal{R}(M))$, the projective analytic spectrum of
$\mathcal{R}(M)$ is the closure of the projectivized row spaces of
$M$ at points where the rank of a matrix of generators of $M$ is
maximal. Denote the projection to $X^{d}$ by $c$. If $M$ is a
submodule of $N$ or $h$ is a section of $N$, then $h$ and $M$
generate ideals on $\Projan \mathcal{R}(N)$; denote them by $\rho(h)$
and $\rho(\mathcal{M})$. If we can express $h$ in terms of a set of
generators $\{n_{i}\}$ of $N$ as $\sum g_{i}n_{i},$ then in the
chart in which $T_{1}\neq 0,$ we can express a generator of
$\rho(h)$ by $\sum g_{i}T_{i}/T_{1}.$ Having defined the ideal sheaf
$\rho(\mathcal{M}),$ we blow it up.

On the blow up $B_{\rho(\mathcal{M})}(\Projan \mathcal{R}(N))$ we have
two tautological bundles. One is the pullback of the bundle on
$\Projan \mathcal{R}(N)$. The other comes from $\Projan
\mathcal{R}(M)$. Denote the corresponding Chern classes by $c_{M}$
and $c_{N}$, and denote the exceptional divisor by $D_{M,N}$.
Suppose the generic rank of $N$ on every component of $X$ is also $g$.

Then the multiplicity of a pair of modules $M, N$ is:

$$
e(M,N) = \sum_{j=0}^{d+g-2}\int D_{M,N}\cdot c_{M}^{d+g-2-j}\cdot
c_{N}^{j}.
$$

Kleiman and Thorup show that this multiplicity is well defined at $x
\in X$ as long as $\overline{M} = \overline{N}$ on a deleted
neighborhood of $x$. This condition implies that $D_{M,N}$ lies in
the fiber over $x$, hence is compact. Notice that when $N=F$ and $M$ has finite colength in $F$ then $e(M,N)$ is the Buchsbaum-Rim multiplicity $e(M,\mathcal{O}_{X,x}^{p})$. There is a fundamental result due to Kleiman and Thorup, the principle of additivity \cite{KT}, which states that given a sequence of $\mathcal{O}_{X,x}$-modules $M\subset N \subset P$  such that the multiplicity of the pairs is well defined, then$$e(M,P)=e(M,N)+e(N,P).$$ Also if $\overline{M}=\overline{N}$ then $e(M,N)=0$ and the converse also holds if $X$ is equidimensional. Combining these two results we get that  $\overline{M}=\overline{N}$ if and only if  $e(M,N)=e(N,P).$ These results will be used in Section 5.

In studying the geometry of singular spaces, it is natural to study
pairs of modules. In dealing with non-isolated singularities, the
modules that describe the geometry have non-finite colength, so
their multiplicity is not defined. Instead, it is possible to define
a decreasing sequence of modules, each with finite colength inside
its predecessor, when restricted to a suitable complementary plane.
Each pair controls the geometry in a particular codimension.

We also need the notion of the polar varieties of $M$. The {\it polar variety of codimension $l$} of $M$ in $X$, denoted
$\Gamma_l(M)$, is constructed by intersecting $\Projan{\mathcal R}(M)$
 with $X\times H_{g+l-1}$ where
$H_{g+l-1}$ is a general plane of codimension $g+l-1$, then projecting to
$X$.

The polar varieties of $M$ can be constructed by working only on $X$. The plane $H_{g+l-1}$ consists of all hyperplanes containing a fixed plane $H_K$ of dimension $g+l-1$; by multiplying the matrix of generators of $M$ by a basis of $H_K$ we obtain a submodule of $M$ denoted $M_H$.

\begin {Prop} In this set-up the polar variety of codimension $l$ consists of the closure in $X$ of the set of points where the rank of $M_H$ is less than $g$, and the rank of $M$ is $g$.\end{Prop}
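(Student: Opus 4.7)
The plan is to analyze both sides of the claimed equality on the Zariski open set $U \subset X$ where $[M]$ has maximal rank $g$, and then take closures in $X$.

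First I would unwind the structure of $\Projan \mathcal{R}(M)$ over $U$. By the description recalled just before the proposition, the projection $c \colon \Projan \mathcal{R}(M) \to X$ is, over $U$, a $\mathbb{P}^{g-1}$-bundle whose fiber at $x$ is the projectivization $L(x) \subset \mathbb{P}^{p-1}$ of the $g$-dimensional row span of $[M](x)$. Consequently the fiber of $\Projan \mathcal{R}(M) \cap (X \times H_{g+l-1})$ over $x \in U$ is exactly $L(x) \cap H_{g+l-1}$, so
$$ c\bigl(\Projan \mathcal{R}(M) \cap (X \times H_{g+l-1})\bigr) \cap U \;=\; \{\, x \in U : L(x) \cap H_{g+l-1} \neq \emptyset \,\}. $$

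Next I would translate this geometric intersection condition into a rank condition on $M_H$. Writing $L^{\mathrm{aff}}(x) \subset \mathbb{C}^p$ for the affine row span of $[M](x)$ and $H_{g+l-1}^{\mathrm{aff}} \subset \mathbb{C}^p$ for the common kernel of the linear forms whose basis of $H_K$ was used to define $M_H$, the very construction of $M_H$ shows that its matrix at $x$ factors as $[M](x)$ followed by evaluation against those forms. A straightforward rank–nullity calculation then gives
$$ \operatorname{rank}[M_H](x) \;=\; g \;-\; \dim\bigl(L^{\mathrm{aff}}(x) \cap H_{g+l-1}^{\mathrm{aff}}\bigr) $$
for each $x \in U$, whence $L(x) \cap H_{g+l-1} \neq \emptyset$ if and only if $\operatorname{rank}[M_H](x) < g$. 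Combining this with the first step yields
$$ \Gamma_l(M) \cap U \;=\; \{\, x \in U : \operatorname{rank}[M_H](x) < g \,\}, $$
which is precisely the set whose closure appears in the statement.

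Finally, I would verify that $\Gamma_l(M)$ is actually the closure in $X$ of this locus. The space $\Projan \mathcal{R}(M)$ is equidimensional of dimension $d + g - 1$ and its restriction over $U$ is dense; the remainder $\Projan \mathcal{R}(M) \setminus c^{-1}(U)$ sits over a proper closed subset of $X$ of dimension at most $d-1$. For a sufficiently general plane $H_{g+l-1}$ of codimension $g+l-1$, a transversality argument forces the intersection of $\Projan \mathcal{R}(M)$ with $X \times H_{g+l-1}$ to be proper on both pieces, so the image under $c$ of the part lying over $X \setminus U$ has dimension strictly smaller than the expected dimension $d - l$ of $\Gamma_l(M)$. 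Hence no top-dimensional component of $\Gamma_l(M)$ lies outside the closure of $\Gamma_l(M) \cap U$, and the proposition follows. The main obstacle is exactly this last step: one must invoke the genericity of $H_{g+l-1}$ to discard the spurious contributions coming from $c^{-1}(X \setminus U)$, where the fibers of $c$ may jump in dimension. The linear-algebra identification in the middle step is essentially formal once the conventions relating $M_H$ to the defining forms of $H_{g+l-1}$ are fixed.
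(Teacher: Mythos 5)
Your proof is correct and follows essentially the same route as the paper's: identify the fiber of $\Projan\mathcal{R}(M)$ over the maximal-rank locus with the projectivized row space, convert the intersection with $H_{g+l-1}$ into the rank-drop condition on $M_H$ by linear algebra (the paper exhibits a covector $v$ with $v[M_H]=0$ but $v[M]\neq 0$ where you use rank–nullity, which is the same computation), and use genericity of the plane to see that no component of the intersection lies entirely over the degenerate locus. The only caveat is notational: the fibers of $\Projan\mathcal{R}(M)$ naturally sit in $\mathbb{P}^{j-1}$, with $j$ the number of generators of $M$, rather than in $\mathbb{P}^{p-1}$, but this does not affect the substance of your argument.
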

\begin{proof} Since $H_{g+l-1}$ is generic, the general point of $\Projan{\mathcal R}(M)\cap X\times H_{g+l-1}$ lies over points where the rank of $M$ is $g$. Choose coordinates so that a basis for $H_K$ consists of the last $g+l-1$ elements of the standard basis of $\mathbb C^j$, $j$ the number of generators of $M$. We can find $v$ such that $v[M_H]=0$ but $v[M]\ne 0$ if and only if we are at a point where the rank of $M_H<g$. The existence of $v$ is equivalent to being able to find a combination of the rows of $[M]$, such that the last $g+l-1$ entries are $0$. This row is a hyperplane which lies in $H_{g+l-1}$.
\end{proof}

Setup: We suppose we have families  of modules $M\subset  N$, $M$ and $N$
 submodules of a free module $F$ of rank $p$
 on an equidimensional family of spaces with equidimensional
 fibers ${\mathcal X}^{d+k}$, ${\mathcal X}$ a family over a smooth base
$Y^k$. We assume that the generic rank of $M$, $N$ is $g\le p$ on every component of the fibers.  Let
$P(M)$ denote $\Projan {\mathcal R}(M)$, $\pi_M$
 the projection to ${\mathcal X}$.

We will be interested in computing, as we move from the special point $0$ to a generic point, the change in the multiplicity of
the pair $(M,N),$ denoted $\Delta(e(M,N))$. We will assume that the integral closures of $M$ and $N$ agree off a set $C$ of dimension
$k$ which is finite over $Y$, and assume we are working on a
sufficiently small neighborhood of the origin, so that every component
of $C$ contains the origin in its closure. Then $e(M,N, y)$ is the
sum of the multiplicities of the pair at all points in the fiber of
$C$ over $y$, and $\Delta(e(M,N))$ is the change in this number from
$0$ to a generic value of $y.$ If we have a set $S$ which is finite
over $Y$, then we can project $S$ to $Y$, and the degree of the
branched cover at $0$ is $mult_{y} S.$ (Of course, this is just the
number of points in the fiber of $S$ over our generic $y.$)

Let $C(M)$ denote the locus of points where $M$ is not free, {\it i.e.}, the
points where the rank of $M$ is less
than $g$, $C(\Projan {\mathcal R}(M))$
its inverse image under $\pi_M$.

We can now state the Multiplicity Polar Theorem. The proof in the ideal case appears in \cite{Gaff1}; the general proof appears in \cite{Gaff}.

\begin{theorem}(Multiplicity Polar Theorem) Suppose in the above
setup we have that $\overline{M} = \overline{N}$ off a set $C$ of
dimension $k$ which is finite over $Y$. Suppose further that
$C(\Projan\mathcal{R}(M))(0) = C(\Projan\mathcal{R}(M(0)))$ except
possibly at the points which project to $0 \in \mathcal{X}(0).$
Then, for y a generic point of $Y$, $$\Delta(e(M,N)) =
mult_{y}\Gamma_{d}(M) - mult_{y}\Gamma_{d}(N)$$
where ${\mathcal X}(0)$ is the fiber over $0$ of the family ${\mathcal X}^{d+k}$, $C(\Projan\mathcal{R}(M))(0)$ is the fiber of $C(\Projan\mathcal{R}(M))$ over $0$ and $M(0)$ is the restriction of the module $M$ to  ${\mathcal X}(0)$. \end{theorem}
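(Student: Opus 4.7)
The plan is to prove the Multiplicity Polar Theorem by a conservation-of-number argument on a generic one-parameter deformation in $Y$, using the Chern-class description of $e(M,N)$ on the double blow-up.

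First I would set up the relative picture. By the Kleiman--Thorup formula, $e(M,N)$ is computed on $B = B_{\rho(\mathcal{M})}(\Projan \cR(N))$ as $\sum_j \int D_{M,N}\cdot c_M^{d+g-2-j}\cdot c_N^{j}$. The hypothesis $\overline{M}=\overline{N}$ off the $k$-dimensional set $C$ forces $D_{M,N}$ to lie over $C$, hence to be proper over $Y$. I would view $\Projan \cR(M)$, $\Projan \cR(N)$, and $B$ as families over $Y$ whose generic fibers compute $e(M(y),N(y))$. The additional hypothesis $C(\Projan\cR(M))(0) = C(\Projan\cR(M(0)))$ ensures that the central fiber of $\Projan\cR(M)$ agrees, away from the excluded locus over $0\in\cX(0)$, with the space $\Projan\cR(M(0))$ built directly from the restricted module.

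Next I would reinterpret each Chern-class term via an iterated generic hyperplane section. By Proposition~2.2, intersecting $\Projan \cR(M)$ with a generic $H_{g+d-1}$ and projecting to $\cX$ yields the codimension-$d$ polar variety $\Gamma_d(M)$, and analogously for $N$. Because $\cX^{d+k}$ has equidimensional $d$-dimensional fibers over $Y^k$, both $\Gamma_d(M)$ and $\Gamma_d(N)$ are $k$-dimensional and finite over $Y$ of generic degrees $mult_y\,\Gamma_d(M)$ and $mult_y\,\Gamma_d(N)$. Choosing a generic analytic arc $y:(\mathbb{C},0)\to(Y,0)$ reduces the problem to a one-parameter family over a disc, in which the polar loci become branched covers of finite degree.

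I would then execute the specialization along this arc. For $t\ne 0$, the Chern-class sum $\sum_j\int D_{M,N}(t)\cdot c_M^{d+g-2-j}\cdot c_N^j$ equals $e(M(t),N(t))$; as $t\to 0$, intersection points of $\Projan\cR(M)(t)$ and $\Projan\cR(N)(t)$ with the fixed generic planes flow from the generic fibers into the special fiber over $C(0)$, and these migrating points are exactly counted by the generic degrees of the polar curves $\Gamma_d(M)$ and $\Gamma_d(N)$. The tautological classes $c_M$ and $c_N$ enter the Kleiman--Thorup sum with opposite orientations on the double blow-up, so the contribution of $\Gamma_d(M)$ appears with a plus sign and that of $\Gamma_d(N)$ with a minus sign. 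Invoking conservation of number on the proper family $D_{M,N}\to Y$ then yields $\Delta(e(M,N)) = mult_y\,\Gamma_d(M) - mult_y\,\Gamma_d(N)$.

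The chief obstacle is justifying that the Chern-class integral on the double blow-up commutes with specialization from a generic $y$ to $0$. This is precisely the role of the hypothesis $C(\Projan\cR(M))(0) = C(\Projan\cR(M(0)))$: without it, $\Projan \cR(M)$ could acquire embedded components over points projecting to $0\in\cX(0)$, producing phantom contributions to $e(M,N)(0)$ that no polar-variety multiplicity can detect. Once the specialization is shown to yield the expected cycle, not merely the expected set, away from the excluded locus, the remainder of the argument is a formal accounting of how points moving in and out of the central fiber modify the Chern-class sum.
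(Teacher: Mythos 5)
A preliminary remark: the paper does not prove this theorem itself; it states it and refers to \cite{Gaff1} for the ideal case and \cite{Gaff} for the general case. So your proposal can only be compared with the strategy of those proofs, which it does follow in outline: compute $e(M,N)$ by the Kleiman--Thorup intersection formula on $B_{\rho(\mathcal{M})}(\Projan\cR(N))$, note that $D_{M,N}$ is proper over $Y$, and run a conservation-of-number argument over $Y$ in which the polar varieties account for the intersection points that escape into the generic fiber. That is the right skeleton.

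The genuine gap is at the central step. The sentence ``the tautological classes $c_M$ and $c_N$ enter the Kleiman--Thorup sum with opposite orientations, so the contribution of $\Gamma_d(M)$ appears with a plus sign and that of $\Gamma_d(N)$ with a minus sign'' is not an argument; it is a restatement of the conclusion. What actually drives the proof is the divisor-class identity on the double blow-up relating $[D_{M,N}]$ to $c_M-c_N$ (up to the sign convention for the exceptional divisor of the blow-up of $\rho(\mathcal{M})$), which makes the Kleiman--Thorup sum telescope: $\sum_{j} D_{M,N}\cdot c_M^{d+g-2-j}\cdot c_N^{j}$ collapses to the difference of the two pure powers $c_M^{d+g-1}$ and $c_N^{d+g-1}$. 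Only after this reduction is $e(M,N)$ expressed through classes whose generic hyperplane representatives cut out precisely the codimension-$d$ polar varieties of $M$ and of $N$ (Proposition 2.2); conservation of number applied to each of these two $k$-dimensional cycles, finite over $Y$, then produces the difference of polar multiplicities with the stated signs. Without the telescoping identity, the claim that the migrating points ``are exactly counted by the generic degrees of the polar curves'' with those signs is precisely what has to be proved.

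A second, smaller but real error concerns the hypothesis $C(\Projan\cR(M))(0)=C(\Projan\cR(M(0)))$. You say that without it one would get phantom contributions ``over points projecting to $0\in\cX(0)$''; but the theorem explicitly \emph{permits} failure of the equality over those points (that is where any discrepancy is absorbed into $e(M(0),N(0))$ and the polar multiplicity at the origin). What the hypothesis forbids is excess behavior at the \emph{other} points of the special fiber: it guarantees that, away from the origin, the fiber over $0\in Y$ of the relative construction agrees with the construction for the restricted module $M(0)$, so that $\Gamma_d(M)$ acquires no vertical component in $\cX(0)$ outside the origin and the two counts being compared by conservation of number are finite and count the same kind of points. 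Your reading inverts where the exception lives.
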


One of application of  the Multiplicity Polar theorem we will need pertains to the intersection multiplicity of two modules as defined by Serre (\cite{S}).  Given modules $M_1\subset F_1$ and $M_2\subset F_2$, $F_i$ free $ {\mathcal{O}}_{X^{d},x}$ modules  of rank $p_i$ as above, Serre's intersection number is the alternating sum of the lengths of the $Tor^i(F^{p_1}/M_1, F^{p_2}/M_2)$. (Cf. \cite  {G-G} for all of the necessary hypotheses for this number to be defined.)  If the local ring of $X$ is Cohen Macaulay, then we can hope to calculate this number as a length.  The two candidates are $e(M_1, {\mathcal{O}}_{C(M_2),x})$ and $e(M_2,  {\mathcal{O}}_{C(M_1),x})$. If the local ring of $X$ is Cohen Macaulay, then these multiplicities are the colength of the ideal of maximal minors of each module (\cite {G-G} cor 2.4). In \cite {G-G} Theorem 2.3 and Corollary 2.5 it is shown that these numbers are equal and both are equal to  Serre's intersection number. 

We now discuss the framework for addressing equisingularity problems we will use in this paper. 

We are interested in equisingularity conditions which are equivalent to the inclusion of the partial derivatives of a map germ with respect to the parameter values in the integral closure of a module. The conditions which can be studied in this way include the Whitney conditions, and Thom's A$_f$ condition. 

Given the total space of a  family of spaces and the  module, the inclusion conditions depend on the polar varieties of the module.

Suppose we have a family of $d$-dimensional spaces parametrized by a smooth $Y^k$; then in \cite{Gaff1} it is shown that if the codimension $d$ polar variety of $M$ is empty, and $h$ is generically in the integral closure of $M$, then it is in the integral closure of $M$. The multiplicity polar theorem shows that to ensure that the polar variety of codimension $d$ is empty, i.e. has multiplicity $0$ over $Y$, it suffices that $e(M(0),N(0))+mult_{y}\Gamma_{d}(N)$ is the same as the multiplicity of $e(M(y),N(y)$ for a generic value of $Y$. 

In this setting it then becomes important to see what the correct choice of $N$ is, and to control its codimension $d$ polar variety. 

Sometimes the correct choice of $N$ is clear from the construction of $X$. For example, if $X$ is a section of a stable singularity $S$, by a map $M(X)$, and we vary $X$ through sections of 
$S$, using $M(\cX)$, then the correct choice of $N(X)$, is just the restriction to $X$ of the infinitesimal deformations of $S$. (Think of fixing the map to the ambient space of $S$, then varying $S$ to induce deformations of $X$.) Since $S$ is stable, the sheaf of infinitesimal deformations of $S$ is just the sheaf associated with the Jacobian module of $S$. If $\cX$ is a family of sections, and $i$ the inclusion of $X$ in family then it is obvious that $M(X)^*(JM(S))=N(X)=i^*(N(\cX))=i^*(M(\cX)^*(JM(S)))$.

If $S\subset \mathbb C^p$ is stable, then we have a nice description of the polar varieties of $N(X)$. 

\begin{Prop} Suppose $S$ is a stable singularity, and $M\:\mathbb C^q\to \mathbb C^p$ defines $M^{-1}(S)=X^d$ with expected codimension. Then $$\Gamma_i(N(X))=M^{-1}(\Gamma_i(C(S)))$$ for $i<d$.\end{Prop}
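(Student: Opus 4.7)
The plan is to identify $\Projan \mathcal{R}(N(X))$ with the pullback of the conormal variety $C(S)$ under $M$, after which the claim reduces to chasing the definition of the polar variety: the polar varieties on both sides of the claimed equality will be obtained from the same hyperplane section of $C(S)$, projected either to $X$ or to $S$.

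First, at each smooth point $s \in S$ the generators of $JM(S)$ (the differentials of a set of defining equations of $S$) span the conormal space at $s$, of dimension $c = \codim S$. Passing to closures therefore gives
$$\Projan \mathcal{R}(JM(S)) = C(S) \subset S \times \mathbb{P}^{p-1}.$$
Next, since $S$ is stable its module of infinitesimal deformations equals $JM(S)$, so $N(X) = M^*(JM(S))$. The expected codimension hypothesis forces this pullback to retain generic rank $c$ on $X$, with a typical generator at $x \in X$ given by $df_i(M(x))$. Thus the projectivized row spaces at $x$ and at $M(x)$ coincide under the canonical identification of the two $\mathbb{P}^{p-1}$ factors, and passing again to closures yields
$$\Projan \mathcal{R}(N(X)) = (M \times \mathrm{id})^{-1}(C(S)) \subset X \times \mathbb{P}^{p-1}.$$

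Applying Proposition 2.2, $\Gamma_i(N(X))$ is the projection to $X$ of the intersection of $\Projan \mathcal{R}(N(X))$ with $X \times H_{c+i-1}$ for a generic $H_{c+i-1}$, and hence
$$\Gamma_i(N(X)) = \pi_X\bigl((M \times \mathrm{id})^{-1}(C(S) \cap S \times H_{c+i-1})\bigr) = M^{-1}\bigl(\pi_S(C(S) \cap S \times H_{c+i-1})\bigr) = M^{-1}(\Gamma_i(C(S))),$$
the middle equality holding because $x$ lies in the leftmost set iff some $H \in H_{c+i-1}$ satisfies $(M(x), H) \in C(S)$, which is precisely the defining condition for $M(x) \in \Gamma_i(C(S))$.

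The main technical obstacle is the second step: justifying cleanly that the pullback of modules is compatible with formation of $\Projan \mathcal{R}$, i.e., that no spurious components appear in the closure over the locus where $M$ fails to be transverse to the singular stratum of $S$. The expected codimension and stability hypotheses together confine this locus to small enough codimension that the generic identification extends uniquely to the closure. The restriction $i < d$ is needed in the last step to guarantee that $H_{c+i-1}$ can be chosen generic enough for both polar varieties to attain their expected dimension, and that pulling the hyperplane section back through $M$ preserves the transversality required for the application of Proposition 2.2 on $X$.
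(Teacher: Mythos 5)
Your overall strategy --- reduce everything to the fact that $N(X)$ is the pullback $M^*(JM(S))$ and that generic data for $S$ pulls back to generic data for $X$ --- is the right one, and your first step ($\Projan\mathcal{R}(JM(S))=C(S)$ because $S$ is stable) matches the paper. But your pivotal intermediate claim, the set-theoretic equality $\Projan\mathcal{R}(N(X))=(M\times\mathrm{id})^{-1}(C(S))$, is not established and is in general only a containment. By definition $\Projan\mathcal{R}(N(X))$ is the \emph{closure} of the projectivized row spaces over the locus where $N(X)$ has maximal rank, i.e.\ over $M^{-1}(S_{\mathrm{smooth}})$, whereas the full preimage $(M\times\mathrm{id})^{-1}(C(S))$ also contains, over every point of $M^{-1}(\mathrm{Sing}(S))$, the entire (larger) fiber of $C(S)$ at the corresponding singular point of $S$. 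These loci can contribute extra components to the right-hand side that are not limits from the good locus, and nothing in your argument rules out that a generic $H_{c+i-1}$ meets them; your closing remark that the expected-codimension and stability hypotheses ``confine this locus to small enough codimension'' is exactly the assertion that needs proof, and it is where the content of the proposition lives. With the equality replaced by the true containment, your chain of equalities only yields $\Gamma_i(N(X))\subseteq M^{-1}(\Gamma_i(C(S)))$, and the reverse inclusion is not addressed.

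The paper sidesteps this entirely by staying downstairs: it invokes Proposition 2.2, which describes $\Gamma_i$ of a module as the closure of the set of points where $g+i-1$ generic generators drop below the generic rank $g$ \emph{while the full module still has rank $g$}. That last clause automatically excludes $M^{-1}(\mathrm{Sing}(S))$, so one never needs to know what $\Projan\mathcal{R}(N(X))$ or the preimage of $C(S)$ looks like over the degenerate locus. The argument then reduces to two pointwise observations --- the rank of a pulled-back matrix of generators at $x$ equals its rank at $M(x)$, and a choice of $g+i-1$ generators generic for $JM(S)$ can simultaneously be taken generic for $M^*(JM(S))$ --- together with the expected-codimension hypothesis, which guarantees that taking closures commutes with taking preimages. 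If you want to salvage your Projan-level route, you would need to prove that the intersection of $(M\times\mathrm{id})^{-1}(C(S))$ with a generic $X\times H_{c+i-1}$ lies in the closure of its part over $M^{-1}(S_{\mathrm{smooth}})$; it is simpler to convert your argument into the paper's rank-drop form.
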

\begin{proof} Since $S$ is stable, $\Gamma_i(N(S))=\Gamma_i(C(S))$. Since $X$ has the expected dimension, sets such as as $M^{-1}(\Gamma_i(C(S)))$ have expected codimension as well for $i\le d$. This implies that $JM(S)$ and $M^*(JM(S))$ have the same generic rank. Now by Proposition 2.2, we know that  $\Gamma_i(C(S))$ and $\Gamma_i(M^*N)$ are defined by the points where $g+i-1$ generic generators of the sheaves have rank less than $g$, where $g$ is the common generic rank. Now we can choose $g+i-1$ generators of $JM(S)$ which are generic for $JM(S)$ and which pullback to generic generators for $M^*(N)$. The pullbacks have rank less than $g$ exactly on
$M^{-1}(\Gamma_i(C(S)))$, which finishes the proof.
\end{proof}

In what follows we will want to consider the intersection number of a map $M:\mathbb C^q\to \mathbb C^p$ with subsets $C$ of $S$. Recall how to think of this. Consider the graph of $M$ in $\mathbb C^q\times \mathbb C^p$, and intersect with $\mathbb C^q\times C$. If the intersection is proper  and of dimension $0$ then the intersection number is well defined, and if we move the graph so that it intersects $\mathbb C^q\times C$ transversely and at smooth points, the number of points will be the intersection multiplicity. 

In the construction of our invariants which depend only on $X$ and its landscape, we will be particularly interested in maps $\tilde M$, which define a deformation of $X$ to its smoothing in the fixed landscape.

 \begin{theorem} Suppose $S$ is a stable singularity, and $M:\mathbb C^q\to \mathbb C^p$ defines $M^{-1}(S)=X^d_M$ with expected codimension, $M$ is proper. Suppose $\tilde M$ is the map which defines a deformation $\tilde X_{\tilde M}$ of $X_M$ to a smoothing. Then $$mult_{{\mathbb C}}\Gamma_d(N(\tilde X_{\tilde M}))=M({\mathbb C}^q)\cdot \Gamma_d(S)$$.  

\end{theorem}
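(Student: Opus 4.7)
The plan is to reduce the statement to Proposition 2.4 applied not to $X_M$ itself but to its one-parameter smoothing $\tilde X_{\tilde M}$, and then to reinterpret the multiplicity $mult_{\mathbb{C}}$ as a generic fiber count, which by conservation equals the desired intersection number. Concretely, I view $\tilde X_{\tilde M}$ as a germ of dimension $d+1$ sitting inside $\mathbb{C}^{q+1} = \mathbb{C}^q \times \mathbb{C}$, defined as $\tilde M^{-1}(S)$ with expected codimension. Since $d < d+1 = \dim \tilde X_{\tilde M}$, Proposition 2.4 applies and gives
$$\Gamma_d(N(\tilde X_{\tilde M})) = \tilde M^{-1}(\Gamma_d(C(S))).$$
Because $\tilde X_{\tilde M}$ has $d$-dimensional fibers over the parameter line $\mathbb{C}$, and $\Gamma_d(N(\tilde X_{\tilde M}))$ has codimension $d$ in $\tilde X_{\tilde M}$, this polar locus is a curve whose projection to $\mathbb{C}$ computes $mult_{\mathbb{C}}\Gamma_d(N(\tilde X_{\tilde M}))$ as the cardinality of a generic fiber.

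Next I identify the generic fiber of $\tilde M^{-1}(\Gamma_d(C(S)))$ over $\mathbb{C}$ with the intersection number. For generic $t \in \mathbb{C}$ the fiber is $\tilde M_t^{-1}(\Gamma_d(S))$, where $\tilde M_t \colon \mathbb{C}^q \to \mathbb{C}^p$ is the section defining the smoothing $\tilde X_{\tilde M}(t)$. Since $\tilde X_{\tilde M}(t)$ is smooth and $\Gamma_d(S)$ is of complementary dimension in $\mathbb{C}^p$ (its codimension in $\mathbb{C}^p$ equals the codimension of $S$ plus $d$, which is $p-q+d = q$ in the complementary slicing used to compute the intersection with the image of a map from $\mathbb{C}^q$), a general $t$ makes the graph of $\tilde M_t$ meet $\mathbb{C}^q \times \Gamma_d(S)$ transversely at a finite number of smooth points. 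By the intersection-theoretic description recalled just before the theorem, this cardinality equals $\tilde M_t(\mathbb{C}^q) \cdot \Gamma_d(S)$.

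Finally, the properness of $M$ (and hence of $\tilde M$) guarantees that the intersection number $\tilde M_t(\mathbb{C}^q) \cdot \Gamma_d(S)$ is independent of the parameter $t$ in the deformation. Taking $t = 0$ identifies it with $M(\mathbb{C}^q) \cdot \Gamma_d(S)$, which is the desired equality.

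The main obstacle I expect is verifying the dimensional/transversality bookkeeping that makes $\Gamma_d(N(\tilde X_{\tilde M}))$ a genuine curve finite over $\mathbb{C}$ and that makes $\tilde M_t \pitchfork \Gamma_d(S)$ hold at smooth points for generic $t$. The first relies on the hypothesis that $\tilde M$ defines a \emph{smoothing}, so the only contribution to $\Gamma_d(N(\tilde X_{\tilde M}))$ away from the central fiber comes from a proper, zero-dimensional intersection of $\tilde M_t(\mathbb{C}^q)$ with $\Gamma_d(S)$; the second follows by a standard genericity argument, using that the one-parameter family $\tilde M$ moves the graph of $M$ off any positive-codimensional stratum of $\Gamma_d(S)$. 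Once these two points are secured, the rest is the bookkeeping outlined above together with the conservation-of-number principle permitted by properness.
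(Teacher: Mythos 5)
Your proposal is correct and follows essentially the same route as the paper: apply Proposition 2.4 to the total space of the smoothing to identify $\Gamma_d(N(\tilde X_{\tilde M}))$ with $\tilde M^{-1}(\Gamma_d(S))$, then use transversality of $\tilde M_t$ to $\Gamma_d(S)$ for generic $t$ together with conservation of number to read off the multiplicity over ${\mathbb C}$ as the intersection number $M({\mathbb C}^q)\cdot\Gamma_d(S)$. (Only a cosmetic slip: the codimension of $\Gamma_d(S)$ in ${\mathbb C}^p$ is $(q-d)+d=q$, not $p-q+d$; the complementarity conclusion you draw is nonetheless the right one.)
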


\begin{proof} We know from the previous proposition that $mult_{{\mathbb C}}\Gamma_d(N(\tilde X_{\tilde M}))=mult_{{\mathbb C}}\tilde M^{-1}(\Gamma_d(S))$. Given $M$ we can choose our smoothing so that $\tilde M$ is transverse to both $S$ and $\Gamma_d(S)$. Then $mult_{{\mathbb C}}\tilde M^{-1}(\Gamma_d(S))=M({\mathbb C}^q)\cdot \Gamma_d(S)$.
\end{proof}
This result shows that  $mult_{{\mathbb C}}\Gamma_d(N(\tilde X_{\tilde M}))$ is independent of the choice of family. In the following sections we will compute this intersection number for maximal rank determinantal singularities, essentially by calculating the polar varieties of $\Sigma_1$.

Since the polar varieties of $S$ are intimately connected with our invariant, and the polar varieties are obtained by intersecting the conormal $C(S)$ of $S$ with enough generic hyperplanes, then projecting to $S$, we next prove a result about $C(\overline{\Sigma}_r)$, the matrices of kernel rank $r$, as $\overline{\Sigma}_r$ is a good first example of an $S$.

We know that the fiber to the normal bundle to the smooth manifold $\Sigma_r$ at $M\in \Sigma_r$, is $Hom(K(M),C(M))$ where $K(M)$ denotes the kernel of $M$ and $C(M)$ denotes the cokernel, which we think of as the vectors in ${\mathbb C}^{n+k}$ which annihilate the image of $M$. So we do not treat this case in the next proposition.

So up to some identifications, the fiber of $C(\overline{\Sigma}_r)$ at $M$ is inside the projectivization of $Hom(K(M),C(M))$, which we denote by ${\mathbb P}Hom(K(M),C(M))$. Let $\Sigma_r(M)$ denote the elements of $Hom(K(M),C(M))$ of kernel rank $r$.

We are guided by the following result of Gelfand, Kapranov and Zelevinsky about dual varieties. 
They considered the projective tangent cone of $\overline{\Sigma}_r$ at the origin in $Hom ({\mathbb C}^n, {\mathbb C}^{n+k})$, and computed its dual variety. Let $X_r$ denote the projective variety determined by $\overline{\Sigma}_r$. Then, they showed that the dual cone of $X_r$ is $X_{n-r}$  (\cite{GKZ}, p36, Prop. 4.11). 

If $M\in Hom(\mathbb C^n,\mathbb C^{n+k})$, then we denote $\mathbb P(\overline{\Sigma}_r(M))$ by $X_r(M)$.

\begin{Prop} Suppose $M$ is in  $\Sigma_s$, $s> r$. Then the fiber of the conormal of $C(\overline{\Sigma}_r)$ at $M$ is $X_{s-r}(M)$.

\end{Prop}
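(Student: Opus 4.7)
The plan is to identify the conormal fiber at $M \in \Sigma_s$ as the set of limits of conormal hyperplanes to $\Sigma_r$ at smooth points approaching $M$, and to show set-theoretically that this equals $X_{s-r}(M)$. Under the trace pairing $V^* \cong \mathbb C^n \otimes (\mathbb C^{n+k})^*$ with $V = Hom(\mathbb C^n, \mathbb C^{n+k})$, the conormal fiber over a smooth $M' \in \Sigma_r$ is $\mathbb P(K(M') \otimes C(M'))$, where $C(M')$ is realized as the annihilator of $Im(M')$ in $(\mathbb C^{n+k})^*$; this is the block-form computation underlying the remarks that precede the proposition. The engine of the argument is that for $M_t \to M$ with $M_t \in \Sigma_r$, one has, after passing to a subsequence, $K(M_t) \to K'$ in $\mathrm{Gr}(r, \mathbb C^n)$ and $Im(M_t) \to I'$ in $\mathrm{Gr}(n-r, \mathbb C^{n+k})$, with $K' \subseteq K(M)$ and $I' \supseteq Im(M)$ by continuity; hence $C(M_t) = Im(M_t)^\perp \to C'' := (I')^\perp \subseteq C(M)$ of dimension $r + k$.

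For the inclusion $C(\overline{\Sigma}_r)(M) \subseteq X_{s-r}(M)$, any limit tensor $L$ of $L_t \in K(M_t) \otimes C(M_t)$ lies in $K' \otimes C'' \subseteq K(M) \otimes C(M)$, so, viewed as a map $K(M) \to C(M)$, it has rank at most $\dim K' = r$, equivalently kernel rank at least $s - r$. This is the easy direction and amounts to lower-semicontinuity of tensor rank.

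For the reverse inclusion $X_{s-r}(M) \subseteq C(\overline{\Sigma}_r)(M)$, I would construct an explicit one-parameter family. Given $[L] \in X_{s-r}(M)$, write $L = \sum_{i=1}^r k_i \otimes c_i$ and enlarge to $K' \subseteq K(M)$ of dimension $r$ and $C'' \subseteq C(M)$ of dimension $r + k$ containing the $k_i$ and $c_i$ respectively. Choose a decomposition $\mathbb C^n = L_0 \oplus U \oplus K'$ with $L_0$ a complement of $K(M)$ and $U$ a complement of $K'$ in $K(M)$. Since $C'' \subseteq Im(M)^\perp$ forces $(C'')^\perp \supseteq Im(M)$ with $\dim (C'')^\perp = n - r$, one may pick $W \subseteq \mathbb C^{n+k}$ of dimension $s - r$ with $Im(M) \oplus W = (C'')^\perp$. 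Fix an isomorphism $\alpha \colon U \to W$ and define $M_t$ by $M_t|_{L_0} = M|_{L_0}$, $M_t|_U = t\alpha$, $M_t|_{K'} = 0$. Then $M_t \to M$; for $t \neq 0$ one reads off $K(M_t) = K'$ and $Im(M_t) = Im(M) \oplus W = (C'')^\perp$, whence $C(M_t) = C''$. Consequently $L \in K(M_t) \otimes C(M_t)$ for every $t \neq 0$, which exhibits $(M, [L])$ as a limit of points of the conormal over the smooth stratum of $\overline{\Sigma}_r$.

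The main obstacle is the reverse inclusion, specifically producing the curve $M_t$ whose kernel and image realize the prescribed subspaces $K'$ and $(C'')^\perp$. The critical feasibility check is that the rank hypothesis $\mathrm{rank}(L) \leq r$ in $Hom(K(M), C(M))$ translates precisely to the existence of an $(s-r)$-dimensional $W$ complementary to $Im(M)$ inside $(C'')^\perp$; this is what links the tensor-rank stratification on the conormal fiber to the matrix-rank stratification of the strata $\Sigma_r$ upstairs, and it is what allows the naive constant lift $L_t \equiv L$ to be a genuine family of conormal vectors.
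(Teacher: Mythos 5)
Your proof is correct, but it takes a genuinely different route from the paper's. The paper decomposes the fiber of $C(\overline{\Sigma}_r)$ at $M$ via the L\^e--Teissier structure theorem into the dual of the tangent cone together with duals of possible exceptional subcones; it then computes the dual of the tangent cone by slicing $\Sigma_s$ transversally at $M$ and invoking the Gelfand--Kapranov--Zelevinsky computation of the dual variety of a generic determinantal variety (the dual of $X_r$ is $X_{n-r}$), extends back to the ambient $Hom(\mathbb C^n,\mathbb C^{n+k})$ using the product structure of the tangent cone along $\Sigma_s$, and finally rules out exceptional cones by a kernel-dimension comparison on limiting hyperplanes. You instead prove the two set-theoretic inclusions directly: the inclusion $C(\overline{\Sigma}_r)(M)\subseteq X_{s-r}(M)$ by semicontinuity (compactness of Grassmannians gives limits $K'\subseteq K(M)$ and $C''=(I')^{\perp}\subseteq C(M)$ of the kernels and cokernels, and closedness of the incidence locus $\{T\in V'\otimes W'\}$ bounds the tensor rank of the limit by $r$), and the reverse inclusion by an explicit one-parameter degeneration $M_t$ whose kernel and image realize any prescribed admissible pair $(K',(C'')^{\perp})$, so that the constant lift $L_t\equiv L$ is a genuine curve in the conormal. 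Your argument is elementary and self-contained -- it needs neither GKZ nor the theory of exceptional cones, and the explicit family $M_t$ simultaneously establishes surjectivity onto $X_{s-r}(M)$ and the absence of extra components, which in the paper's framework is exactly the statement that there are no exceptional subcones. What the paper's route buys in exchange is the connection to the defining equations $[T_{i,j}]^tM=0$, $M[T_{i,j}]^t=0$ of $\Projan\cR(N_D)$ and to the tangent-cone/dual-variety picture that is reused in the polar computations of Sections 3 and 4; your route gives no such byproduct but is arguably the cleaner proof of the proposition itself. The one point worth making explicit in a write-up is the identification of the conormal fiber at a smooth point $M'\in\Sigma_r$ with $\mathbb P(K(M')\otimes C(M'))$ under the trace pairing (with $C(M')$ realized inside $(\mathbb C^{n+k})^{*}$ as the annihilator of $Im(M')$), since the rank stratification of $X_{s-r}(M)$ is read off through that identification; the dimension count $\dim K(M')\cdot\dim C(M')=r(r+k)=\codim\Sigma_r$ confirms it.
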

\begin{proof} 
    The fiber of $C(\overline{\Sigma}_r)$ at $x$ consists of the union of the dual cone of the tangent cone at $x$ and the dual cones of certain subcones which are exceptional  (\cite {Le-T}). In our proof we will compute the dual cones of the tangent cones, then show that there are no exceptional subcones. 


We will see in the next section that some of the equations of $C(\overline{\Sigma}_r)$ will be 
$$[T_{i,j}]^tM=0$$  $$M[T_{i,j}]^t=0$$
where $[T_{i,j}]$ is a matrix of indeterminates whose entries are coordinates on ${\mathbb P}^{n(n+k)-1}$ and $M\in Hom(\mathbb C^n,\mathbb C^{n+k})$. As we calculate the fiber of $C(\overline{\Sigma}_r)$ at $M$, we fix $M$.

These equations can be re-written by taking transpose as:

$$M^t[T_{i,j}]=0$$
$$[T_{i,j}]M^t=0$$

The first equation implies that the image of $[T_{i,j}]$ is in $K(M^t)$; in turn this implies that the image of $[T_{i,j}]$ is in $C(M)$. The second equation implies $K(T)\supset im(M^t)$. 
Notice that we have that $\mathbb C^n=K(M)+ im(M^t)$. So, if $L\in Hom(K(M),C(M))$, then we can view $L$ as in $Hom(\mathbb C^n,\mathbb C^{n+k})$, using the inclusion of $C(M)$ into 
$\mathbb C^{n+k}$, and extending by $0$ over $im(M^t)$. Extending in this way, $L$ will satisfy both sets of equations. Further if $L\in X_{s-r}(M)$, then $L\in X_{n-r}$, because we have added $im(M^t)$ to the kernel.

We assume coordinates chosen so that the upper left corner of $M$ is an $n-s\times n-s$ identity matrix, all other entries $0$. Let $H$ be the set of matrices whose upper left corner is an $n-s\times n-s$ identity matrix, and lower right corner is a $(s+k)\times s$ matrix of indeterminates. The family $H$ is a transverse section of $\Sigma_s$ at $M$ of complementary dimension. By  \cite{GKZ}, p36, Prop. 4.1, we know that the dual to the tangent cone to  $\overline{\Sigma}_r\cap H$ is $X_{s-r}(M)$. The elements of this last set give elements of 
$X_{n-r}$ as we remarked earlier. 

The tangent vectors to $Hom(\mathbb C^{n},\mathbb C^{n+k})$ can be thought of as elements of $Hom(\mathbb C^{n},\mathbb C^{n+k})$ as well. To check if a tangent vector is in the hyperplane defined by a linear form $L$ (which also can be thought of as an element of $Hom(\mathbb C^{n},\mathbb C^{n+k}))$, take the entry-wise dot product of $L$ and $B$.

Going back to $M$, the normal space consists of the matrices of whose lower right corner is a $(s+k)\times s$ matrix, all other entries $0$. These other entries give the tangent vectors to 
$\Sigma_s$ at $M$. This implies that the extensions of  $X_{s-r}(M)$ to $X_{n-r}$ are the duals to the tangent cone to $\Sigma_s$ at $M$, as the analytic triviality of $\overline{\Sigma}_r$ along $\Sigma_s$ implies the tangent cone of $\overline{\Sigma}_r$ is a product. 

Now we have to show that there are no exceptional subcones. From the group action of $GL(n+k)\times GL(n)$ on the  matrices  we know that the germ of ${\Sigma}_r$ at every smooth point of $\Sigma_r$ is analytically equivalent, so the exceptional cones would have to be  the tangent cones to the singular set of $\overline{\Sigma}_r$ at $M$. Suppose one of these cones, say that of $\overline{\Sigma}_s$, is an exceptional cone. Then every tangent hyperplane to  ${\Sigma}_s $ must be a limit of tangent hyperplanes from $\Sigma_r$. Suppose $M\in {\Sigma}_s$. Consider sequences of points from $C(\overline{\Sigma}_r)$, which converge to $M,H$, $H$ a hyperplane in $\mathbb P(Hom(K(M),C(M)))$ which is of rank $s$. This implies that extension of $H$ to $Hom(\mathbb C^{n},\mathbb C^{n+k})$  has rank $s$ and kernel $imM^t$. The dimension of $imM^t$ is $n-s$. Meanwhile the limiting hyperplanes from $\Sigma_r$ have kernel dimension $n-r>n-s$, which is impossible.
\end{proof} 

\begin{cor} $\Gamma^u(\overline{\Sigma}_r)$ is empty for $u\le (n-r)(n+k-r)-1$.
\end{cor}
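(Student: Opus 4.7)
The plan is to derive the corollary directly from Proposition 2.6. The polar variety $\Gamma^u(\overline{\Sigma}_r)$ is obtained by intersecting the conormal $C(\overline{\Sigma}_r) \subset \overline{\Sigma}_r \times \mathbb P^{n(n+k)-1}$ with $\overline{\Sigma}_r \times L$ for a generic linear subspace $L \subset \mathbb P^{n(n+k)-1}$, and then projecting to $\overline{\Sigma}_r$; here the upper index denotes the dimension of the resulting polar variety. Since $\dim C(\overline{\Sigma}_r) = n(n+k) - 1$, a dimension count in the incidence shows that to obtain a polar of dimension $u$ one takes $\dim L = u$, and hence $\Gamma^u(\overline{\Sigma}_r) = \emptyset$ precisely when the image of the projection $p_2\colon C(\overline{\Sigma}_r) \to \mathbb P^{n(n+k)-1}$ avoids a generic $L$ of dimension $u$.

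The key step is to bound this image. By Proposition 2.6, for any $M \in \Sigma_s \subseteq \overline{\Sigma}_r$ (so $s \geq r$) the fiber of $C(\overline{\Sigma}_r)$ is $X_{s-r}(M)$, consisting of elements $L \in \mathbb P\, Hom(K(M), C(M))$ of rank at most $r$. As was already observed in the proof of Proposition 2.6, extending such an $L$ by zero on $\mathrm{im}(M^t)$ realizes it as an element of $Hom(\mathbb C^n, \mathbb C^{n+k})$ of rank at most $r$. Therefore $p_2(C(\overline{\Sigma}_r)) \subseteq X_{n-r}$, whose dimension equals $n(n+k) - (n-r)(n+k-r) - 1$.

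Finally, a generic linear subspace $L \subset \mathbb P^{n(n+k)-1}$ of dimension $u$ meets a subvariety of dimension $D$ exactly when $u + D \geq n(n+k) - 1$. Applied to $X_{n-r}$, this says $L$ misses $p_2(C(\overline{\Sigma}_r))$ whenever $u < (n-r)(n+k-r)$, and hence $\Gamma^u(\overline{\Sigma}_r) = \emptyset$ for $u \leq (n-r)(n+k-r) - 1$. There is no real obstacle: the result is essentially immediate from the description of the conormal fibers in Proposition 2.6, once one keeps the convention in mind that $\Gamma^u$ refers to the polar of dimension $u$ (so that the bound is nontrivial only for small $u$, and expresses the dual defect $(n-r)(n+k-r)$ of $\overline{\Sigma}_r$).
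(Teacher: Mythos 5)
Your argument is correct and is essentially the paper's own proof: both use Proposition 2.6 to identify the relevant part of $C(\overline{\Sigma}_r)$ in $\mathbb{P}Hom(\mathbb C^n,\mathbb C^{n+k})$ with (a subset of) $X_{n-r}$, whose codimension is $(n-r)(n+k-r)$, and then conclude by counting dimensions against a generic linear space. The only cosmetic difference is that you bound the full image $p_2(C(\overline{\Sigma}_r))$ while the paper works with the fiber over the origin; by homogeneity these coincide with $X_{n-r}$ and the dimension count is identical.
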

\begin{proof} Let $h$ denote the dimension of $Hom(\mathbb C^{n},\mathbb C^{n+k})$. Let $c$ denote the codimension of $\overline{\Sigma}_{n-r}$ in $Hom(\mathbb C^{n},\mathbb C^{n+k})$. By the last proposition $c$ is also the codimension of the fiber of $C(\overline{\Sigma}_r)$ over the origin in $\mathbb PHom(\mathbb C^{n},\mathbb C^{n+k})$.

If $j$ satisfies 
$$j\ge h-c,$$
then for a generic choice of $j$ hyperplane sections, the polar determined by these sections will be empty as the number of sections is greater than the dimension of the fiber over zero.

Using $j$ sections determines the polar of dimension $h-1-j$, so the polar of dimension less than or equal to $h-1-(h-c)=c-1=(n-r)(n+k-r)-1$ is empty.
\end{proof}
For example, this implies that for maximal rank deteminantal singularities on ${\mathbb C}^5$ with presentation matrix $M$ of size $(3,2)$, $mult_{{\mathbb C}}\Gamma_d(N(\tilde X_{\tilde M}))$ is always $0$.

Now we describe how to use the information coming from a smoothing of $X$ to study the polar varieties of a family containing $X$ in the fixed landscape. We do this in a little more generality than we need for our applications.

Here is our setup. Let $\mathcal{X}_{Y} \subset \mathcal{X}_Z$ be the total spaces of two families over smooth bases $(Y,0)$ and $(Z,0)$ such that $(Y,0)$ is a proper closed subspace of $(Z,0)$, and $Z \subset \mathcal{X}_Z$. Assume that the central fiber $\mathcal{X}_{0}$ of the two families is $(X,0)$. Let $M$ be a sheaf of modules on $\mathcal{X}_Z$, which is a subsheaf of a free sheaf $F$, of rank $g$ off a closed subspace $C(M)$ finite over $(Z,0)$.

Let $Z_0$ be a small enough neighborhood of $Z$ around $0$. Let $y \in Z_0 \cap Y$ be a generic point of $Y$ to be specified later, and let $Z_y$  be a small enough neighborhood of $y$ such that $Z_y \subset Z_0$.

Work over $(\mathcal{X}_Z,0)$. Let $\pi_{M}$ be the structure map $\pi_{M}: \mathrm{Proj}(\mathcal{R}(M)) \rightarrow (\mathcal{X}_Z,0)$.  Consider the composition of maps

$$\pi_{M}^{-1} (C(M)) \hookrightarrow  \mathcal{X}_Z \times \mathbb{P}^{g(M)-1} \xrightarrow{pr_2} \mathbb{P}^{g(M)-1}.$$
As $M$ is of generic rank $g$, by the Kleiman Transversality Theorem (see \cite{K}), the intersection of $\pi_{M}^{-1} (C(M))$ with a general plane $H_{d+g-1}$ from  $\mathbb{P}^{g(M)-1}$ of codimension $d+g-1$, is of dimension at most $\dim Z - 1$. Therefore, for a generic $z \in Z_0$ the fiber over $z_0$ of the projection $\Gamma_d(M)$ of $\mathrm{Proj}(\mathcal{R}(M)) \cap H_{d+g-1}$ to $\mathcal{X}_Z$ consists of the same number of points, each of them appearing with multiplicity one, and at which the rank of $M$ is maximal. Denote this number by $\mathrm{mult}_{Z_0}\Gamma_d(M)$.

In the same way define $\mathrm{mult}_{Y}\Gamma_d(M)$ for $(\mathcal{X}_Y,0)$, where we identify $M$  with its image in $F \otimes_{\mathcal{O}_{\mathcal{X}_Z}}\mathcal{O}_{\mathcal{X}_Y}$. Let $y \in Z_0$ be generic enough so that the cover $\Gamma_d(M) \rightarrow (Y,0)$ is unramified at $y$. Denote by $y_1,\ldots,y_k$ the points in the fiber $C(M)_y$. Define $\mathrm{mult}_{Z_{y},y_i}\Gamma_d(M)$ as above for small enough neighborhood $(\mathcal{X}_y,y_i)$. Set
$$\mathrm{mult}_{Z_y}\Gamma_d(M):=\sum_{i=1}^{k}\mathrm{mult}_{Z_{y},y_i}\Gamma_d(M).$$
The following result shows that the invariants $\mathrm{mult}_{Z_0}\Gamma_d(M)$ and $\mathrm{mult}_{Z_y}\Gamma_d(M)$
control the presence of $\Gamma_d(M)$, which in our applications is the obstruction to various equisingularity conditions for a suitable choice of $M$.

\begin{theorem}\label{covering argument}
We have
\begin{equation}\label{key polar eq}
\mathrm{mult}_{Z_0}\Gamma_d(M) - \mathrm{mult}_{Z_y}\Gamma_d(M)=\mathrm{mult}_{Y}\Gamma_d(M).
\end{equation}
\end{theorem}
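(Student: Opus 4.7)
The strategy is to read \eqref{key polar eq} as a statement of conservation of intersection number for the generically finite projection $\pi \colon \Gamma_d(M) \to Z$ obtained by composing the inclusion $\Gamma_d(M) \hookrightarrow \mathcal{X}_Z$ with the structure map $\mathcal{X}_Z \to Z$. By the Kleiman transversality invoked in the setup, $\mathrm{Proj}(\mathcal{R}(M))\cap H_{d+g-1}$ has the expected dimension $\dim Z$ and projects birationally to $\mathcal{X}_Z$, so $\Gamma_d(M)$ is pure of dimension $\dim Z$ and $\pi$ is generically finite, with generic degree equal to $\mathrm{mult}_{Z_0}\Gamma_d(M)$ near $0$. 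The principle of continuity of intersection numbers then gives
\[
\mathrm{mult}_{Z_0}\Gamma_d(M) \;=\; \sum_{p \in \Gamma_d(M)\cap\pi^{-1}(y)} i_p\bigl(\Gamma_d(M),\pi^{-1}(y)\bigr)
\]
for any $y \in Z_0 \cap Y$, and in particular for a $y$ generic enough that $C(M)_y = \{y_1,\ldots,y_k\}$ and $\Gamma_d(M|_Y) \to Y$ is unramified at $y$.

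Next I would split $\Gamma_d(M) \cap \pi^{-1}(y)$ into points lying in $C(M)_y$ and points lying off $C(M)_y$. For each $y_i \in C(M)_y$, the local intersection multiplicity $i_{y_i}(\Gamma_d(M),\pi^{-1}(y))$ agrees with the local degree of $\pi$ at $y_i$, which by the definition in the setup equals $\mathrm{mult}_{Z_y,y_i}\Gamma_d(M)$; summing over $i$ gives $\mathrm{mult}_{Z_y}\Gamma_d(M)$. For a point $p$ off $C(M)_y$, $M$ has maximal rank at $p$, and by Proposition~2.2 the smooth locus of $\Gamma_d(M)$ near $p$ is cut out by the rank drop of $M_H$---a condition whose restriction to $\mathcal{X}_Y$ cuts out the smooth locus of $\Gamma_d(M|_Y)$. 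Hence $p \in \Gamma_d(M|_Y)$, the unramified hypothesis forces $i_p=1$, and the total number of such $p$ equals the generic fiber size of $\Gamma_d(M|_Y)\to Y$, namely $\mathrm{mult}_Y\Gamma_d(M)$. Adding the two contributions yields $\mathrm{mult}_{Z_0}\Gamma_d(M) = \mathrm{mult}_{Z_y}\Gamma_d(M) + \mathrm{mult}_Y\Gamma_d(M)$, which rearranges to \eqref{key polar eq}.

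The main obstacle is rigorously identifying $\mathrm{mult}_{Z_y,y_i}\Gamma_d(M)$---defined as the count of preimages of a nearby generic $z \in Z_y$ inside a small neighborhood of $y_i$---with the scheme-theoretic local intersection multiplicity $i_{y_i}(\Gamma_d(M),\pi^{-1}(y))$. For sufficiently small neighborhoods the two coincide by a standard argument on the local degree of a finite map, valid because $C(M)$ is finite over $Z$, so the points $y_1,\ldots,y_k$ are isolated in the fiber of $C(M)$ and can be separated by disjoint neighborhoods.
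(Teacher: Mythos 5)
Your proof is correct and follows essentially the same route as the paper: both treat $\Gamma_d(M) \to Z$ as a branched cover of degree $\mathrm{mult}_{Z_0}\Gamma_d(M)$ near $0$ and decompose its fiber over a generic $y \in Y$ into the points of $C(M)_y$ (contributing $\mathrm{mult}_{Z_y}\Gamma_d(M)$ via the local degrees at the $y_i$) and the points where $M$ has maximal rank, which for a common general plane $H_{d+g-1}$ are exactly the fiber of $\Gamma_d(M|_Y)\to Y$ and contribute $\mathrm{mult}_{Y}\Gamma_d(M)$. Your final remark on identifying $\mathrm{mult}_{Z_y,y_i}\Gamma_d(M)$ with the local intersection multiplicity corresponds precisely to the paper's choice of disjoint small neighborhoods of the $y_i$ avoiding the branches that do not pass through them.
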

\begin{proof} Let $H_{d+g-1}$ be a general plane that produces the covers $\Gamma_d(M) \rightarrow (Z,0)$ and $\Gamma_d(M) \rightarrow (Y,0)$. We claim that $H_{d+g-1}$ gives each of the covers $\Gamma_d(M) \rightarrow (Z_y,y)$, where by abuse of notation $M$ is identified with $M \otimes_{{\mathcal{O}_{\mathcal{X}_Z}}}\mathcal{O}_{\mathcal{X}_{y},y_i}$. Indeed, for each $i$, let $(\mathcal{X}_y,y_i)$ be a small enough neighborhood so that it avoids the branches of the cover $\Gamma_d(M) \rightarrow (Z,0)$ that do not pass through $y_i$. As the codimension of $H_{d+g-1}$ is right and the cover $\Gamma_d(M) \rightarrow (Z,0)$ is unramified for generic $z \in Z_y$, we get that the branches passing through $y_i$ form the polars $\Gamma_d(M) \rightarrow (Z_y,y)$ for the various restrictions of $M$ to $(\mathcal{X}_y,y_i)$. Observe that for generic $z \in Z_y$ close enough to $y$ the degree of the cover $\Gamma_d(M) \rightarrow (Z,0)$ is $\mathrm{mult}_{Z_0}\Gamma_d(M)$. Over $y$ some of these branches merge at the $y_i$ and their number is $\mathrm{mult}_{Z_y}\Gamma_d(M)$ as showed above. The rest of the branches intersect $\mathcal{X}_y$ at points where the rank of $M$ is maximal by our choice of $y$. Moreover, their number is $\mathrm{mult}_{Y}\Gamma_d(M)$ by our choice of $H_{d+g-1}$. This finishes the proof.
\end{proof}
Throughout the rest of the paper we will apply Theorem \ref{covering argument} in the following setting: $(X,0)$ is an isolated singularity, $Z$ is a smoothing component of the miniversal base space of $(X,0)$, and $M$ is an appropriate modification of the relative Jacobian module of the induced deformation $\mathcal{X}_Z$. Because $Z$ is a smoothing component, we will compute $\mathrm{mult}_{Z_0}\Gamma_d(M)$ as $\mathrm{mult}_{S}\Gamma_d(M)$ where $S \subset Z$ is a smooth curve and the generic fiber of the induced deformation $\mathcal{X}_{S}$ of $(X,0)$ is smooth. By Theorem \ref{covering argument} we will know that if $\mathrm{mult}_{Z_y}\Gamma_d(M)$ is independent of $y$, then $\Gamma_d(M)$ will be empty. In turn, this will be used to ensure that the integral closure conditions on which an equisingularity condition depends, hold.

\section{The determinantal normal module}

In this section we study the determinantal normal module, which is the first order infinitesimal deformations for our choice of landscape. We first set the stage for applying the framework from \cite{Gaff1} to families of determinantal singularities. 

In \cite{Gaff1}, in applications to families of $d$-dimensional isolated singularities, given a module $M$, equisingularity conditions were controlled using invariants of the pair  $(M,N)$ where $N=H_{d-1} (M)$. Here $H_{d-1} (M)$  is the module of elements which were in the integral closure of $M$ off a subset of codimension at least $d$. If $M$ is the Jacobian module $JM(X)$ or $mJM(X)$, then there is a link between $H_{d-1} (M)$ and the infinitesimal deformations of $X$.

Let $I$ be the ideal of $\cO_n$ defining $X^d$; denote the set of all elements  $h\in I$  such that the
partial derivatives of $h$ are in $I$ by
$\int I$. Note that we can identify $\cO^p_X$ with its dual $\hom(\cO^p_X, \cO_X)$. If $I$ has $p$ generators, we have the following short exact sequence of
$\cO_X$ modules.

$$0\To R\To \cO^p_X\To I/I^2\To 0$$ 

Here $R$ is the module of relations. Denote the map to $I/I^2$ by $j$. This gives the injection 
$$0\to \hom(I/I^2,\cO_X)\to \hom(\cO^p_X, \cO_X).$$

So, we can identify elements in the image of this last inclusion with their preimages. Note that each partial derivative operator defines an element of 
$\hom(I/I^2,\cO_X)$. Denote the submodule of $\hom(I/I^2,\cO_X)$ generated by the elements defined by the partial derivative operators by $D$. Then 

\begin{Prop}  With the identification of $\cO^p_X$ with $\hom(\cO^p_X, \cO_X)$, the module $JM(X)$ is the image of $D$ under the inclusion of $\hom(I/I^2,\cO_X)$ in
$\hom(\cO^p_X, \cO_X)$, and $H_0(JM(X))$ is the image of $\hom(I/\int I,\cO_X)$. \end {Prop}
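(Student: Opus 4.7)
The plan is to realize both $JM(X)$ and $\hom(I/\int I,\cO_X)$ as explicit submodules of $\cO_X^p$ via orthogonality to the relation module $R$, and then exploit that at a smooth point of $X$ every module in sight coincides with the normal sheaf of $X$ in $\mathbb C^n$, which is locally free and hence integrally closed.

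For the first assertion, the inclusion $\hom(I/I^2,\cO_X)\hookrightarrow\hom(\cO_X^p,\cO_X)\cong\cO_X^p$ sends $\phi$ to $(\phi(\bar f_1),\ldots,\phi(\bar f_p))$. Each partial derivative $\partial/\partial x_k$ descends to an element $d_k\in\hom(I/I^2,\cO_X)$ because $\partial/\partial x_k$ carries $I^2$ into $I$ by Leibniz; its image in $\cO_X^p$ is $(\partial f_1/\partial x_k,\ldots,\partial f_p/\partial x_k)\bmod I$, which is the $k$-th generator of $JM(X)$. Hence the image of $D$ is exactly $JM(X)$.

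For the second assertion, step one is to show $JM(X)\subseteq\hom(I/\int I,\cO_X)$. The image of $\hom(I/\int I,\cO_X)$ in $\cO_X^p$ is cut out by the condition $\sum r_i h_i=0$ in $\cO_X$ whenever $\sum r_i f_i\in\int I$. For a generator $v_k=(\partial f_1/\partial x_k,\ldots,\partial f_p/\partial x_k)$ and $g=\sum r_i f_i\in\int I$, Leibniz gives
$$\partial g/\partial x_k=\sum r_i\,\partial f_i/\partial x_k+\sum (\partial r_i/\partial x_k)\,f_i.$$
The left side lies in $I$ since $g\in\int I$, and the second sum on the right lies in $I$ since $f_i\in I$; hence $\sum r_i\partial f_i/\partial x_k\in I$, which is exactly the required orthogonality. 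Step two is the key local identification $\int I = I^2$ at a smooth point $x\in X$: the inclusion $I^2\subseteq\int I$ is immediate, and conversely choosing coordinates on $\mathbb C^n$ at $x$ with $I_x=(x_1,\ldots,x_{n-d})$, an element $h=\sum a_i x_i\in\int I$ has $\partial h/\partial x_j\equiv a_j\pmod I$ for $1\le j\le n-d$, forcing $a_j\in I$ and hence $h\in I^2$.

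To finish, at smooth points of $X$ one therefore has $\hom(I/I^2,\cO_X)=\hom(I/\int I,\cO_X)=JM(X)$, with common value the normal sheaf of $X$ in $\mathbb C^n$, which is locally free of rank $n-d$ and hence integrally closed in $\cO_X^p$. An element of $H_0(JM(X))$ lies in $\overline{JM(X)}$ off a subset of codimension at least one; at a generic (smooth) point such an element then lies in $JM(X)=\hom(I/\int I,\cO_X)$. Conversely, any element of $\hom(I/\int I,\cO_X)$ already lies in the locally free module $JM(X)$ at every smooth point and hence in $H_0(JM(X))$. Because $X$ is reduced and equidimensional, the vanishing condition cutting out $\hom(I/\int I,\cO_X)$ inside $\cO_X^p$ is detected at the generic points of the components, so the identifications at smooth points extend to the required global equality. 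The only nontrivial input is the local identity $\int I=I^2$ at smooth points; everything else is Leibniz bookkeeping together with the local freeness of the normal sheaf.
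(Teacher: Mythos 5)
Your argument is correct in substance, but note that the paper does not actually prove this Proposition: its ``proof'' is the citation ``Cf. \cite{Gaff-AG} Proposition 5.1,'' followed by a remark about the radical case, so what you have produced is a self-contained replacement rather than an alternative to a written argument. Your route --- realizing $JM(X)$ and $\hom(I/\int I,\cO_X)$ as the submodules of $\cO_X^p$ orthogonal to the relations landing in $I^2$ and in $\int I$ respectively, checking by Leibniz that the partial-derivative vectors satisfy the stronger orthogonality, proving $\int I=I^2$ at smooth points in adapted coordinates, and then reducing both membership conditions to the generic points of the components --- is the natural one, and the reduction to generic points is exactly what makes the comparison with $H_0$ (a generic integral-closure condition) work. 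Two places deserve tightening. First, ``locally free of rank $n-d$ and hence integrally closed'' is not a valid implication in general (over a non-normal ring even a principal ideal can fail to be integrally closed, e.g. $(x)\subset\mathbb{C}\{t^2,t^3\}$); what you actually need, and what is true at a smooth point of $X$, is that the matrix of generators of $JM(X)$ has rank $n-d$ equal to its generic rank there, so that $JM(X)$ is a direct summand of $\cO_{X,x}^p$, and direct summands of a free module over a reduced ring are integrally closed. Second, both your coordinate computation of $\int I$ at smooth points and the step ``the vanishing condition is detected at the generic points'' require $I$ to be radical, i.e. $X$ reduced; you invoke reducedness only in the last sentence, but it is a genuine hypothesis of your proof and should be stated at the outset. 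Within the reduced setting --- which is the one the paper actually uses --- your argument is complete, and as a bonus it re-derives the paper's follow-up remark (via \cite{P}) that for radical $I$ the inclusion $\hom(I/\int I,\cO_X)\subseteq\hom(I/I^2,\cO_X)$ is an equality, since both submodules of $\cO_X^p$ are determined by their localizations at the generic points, where $\int I$ and $I^2$ coincide.
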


\begin{proof} Cf. \cite{Gaff-AG} Proposition 5.1.

If we assume $I$ is radical then in fact we have the module $H_0(JM(X))$ is the image of the normal module,  $\hom(I/ I^2,\cO_X)$ for then the inclusion  of $\hom(I/\int I,\cO_X)$ into  $\hom(I/I^2,\cO_X)$ is an isomorphism. (Cf. \cite {P} lemma 1.15.)
\end{proof}
It is not hard to see, that if the singular locus of $X^{d+k}$ has codimension $d$, then $H_0(JM(X))=H_{d-1}(JM(X))=H_{d-1}(mJM(X))$.

If $X,0$ is the germ of a  Cohen-Macauley (CM)  subvariety of $\mathbb{C}^q,0 $ of codimension 2, then its ideal is generated by the maximal minors of an $(n+1)\times n$ matrix with entries in $\cO_q$. Let $M_X$ denote the presentation matrix of $X$, $M_{i,j}$, the $(i,j)$ entry of $M_X$.  We have a description of the normal module in terms of the presentation matrix.
\begin{Prop} Suppose $X,0$ is the germ of a  Cohen-Macauley (CM)  subvariety of $\mathbb{C}^q,0 $ of codimension 2, then normal module of $X$ is given by
$$ Mat(n+1,n;\cO_q) /  Im(g)$$
where $g$ is the map 
$$Mat(n+1,n+1;\cO_q)\oplus \Mat(n,n;\cO_q) \overset{g}{\rightarrow}  \Mat(n+1,n;\cO_q)$$

mapping $(A,B)  \mapsto  AM+MB$.
\end {Prop}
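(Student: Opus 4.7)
The plan is to identify $N(X) = \hom_{\cO_X}(I/I^2, \cO_X)$ explicitly via the Hilbert-Burch resolution, then construct a natural map $\Phi$ from $\Mat(n+1, n; \cO_q)$ onto $N(X)$ whose kernel is exactly $\mathrm{Im}(g)$.

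Since $X$ is Cohen-Macaulay of codimension 2, Hilbert-Burch furnishes the free resolution
$$0 \to \cO_q^n \xrightarrow{M_X} \cO_q^{n+1} \xrightarrow{\phi} \cO_q \to \cO_X \to 0,$$
where $\phi(e_i) = (-1)^i \det M_X^{(i)}$ is the signed maximal minor obtained by deleting row $i$ of $M_X$. Tensoring with $\cO_X$ produces the presentation $\cO_X^n \xrightarrow{\bar M_X} \cO_X^{n+1} \to I/I^2 \to 0$, and applying $\hom_{\cO_X}(-, \cO_X)$ identifies $N(X)$ with $\ker(\bar M_X^T : \cO_X^{n+1} \to \cO_X^n)$. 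Define $\Phi: \Mat(n+1, n; \cO_q) \to N(X)$ by sending $\dot M$ to the tuple of first-order perturbations of the minors, $(\dot \phi_1, \ldots, \dot \phi_{n+1}) \bmod I$, where $\dot \phi_i := \sum_{j,k} (\partial \phi_i / \partial M_{j,k}) \dot M_{j,k}$. Differentiating the syzygy identity $\sum_i \phi_i M_{i,l} = 0$ gives $\sum_i \dot \phi_i M_{i,l} + \sum_i \phi_i \dot M_{i,l} = 0$, and reducing modulo $I$ kills the second sum, verifying that $(\dot \phi_i) \in \ker(\bar M_X^T)$, so $\Phi$ is well-defined.

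To see that $\mathrm{Im}(g) \subseteq \ker \Phi$, observe that for $\dot M = AM + MB$ one has $M + t\dot M \equiv (I + tA)\, M\, (I + tB) \pmod{t^2}$; since $I + tA$ and $I + tB$ are invertible over $\cO_q[t]/(t^2)$, row and column operations by invertible matrices do not change the ideal of maximal minors, so this ideal agrees with $I$ modulo $t^2$, each $\dot \phi_i \in I$, and $\Phi(\dot M) = 0$.

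The heart of the proof is showing that $\Phi$ is surjective and $\ker \Phi \subseteq \mathrm{Im}(g)$. This is the classical unobstructedness theorem of Schaps for Cohen-Macaulay codimension 2 singularities: every first-order deformation of $X$ is determinantal, i.e., realized by some perturbation of $M_X$, and two perturbations induce the same first-order deformation of $I$ precisely when they differ by a row-column operation $AM + MB$. Concretely, for surjectivity one lifts $(n_i) \in \ker(\bar M_X^T)$ to $(\tilde n_i) \in \cO_q^{n+1}$ satisfying $\sum_i \tilde n_i M_{i,l} \in I$, and uses the cofactor identities encoded in the Hilbert-Burch complex to solve for a $\dot M$ with $\dot \phi_i \equiv \tilde n_i \pmod I$. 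For the reverse kernel inclusion, a $\dot M$ with all $\dot \phi_i \in I$ yields a trivial first-order deformation of the Hilbert-Burch complex; rigidity of the minimal free resolution then provides automorphisms $I + tA$ of $\cO_q^{n+1}$ and $I + tB$ of $\cO_q^n$ intertwining the old and perturbed differentials modulo $t^2$, forcing $\dot M = AM + MB$. This final step, which leverages the special structure of CM codimension 2 deformations, is the principal obstacle; the preceding manipulations are formal.
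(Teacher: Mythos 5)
The paper offers no argument of its own for this statement---its ``proof'' is the single citation to Lemma 2.6 of Fr\"uhbis-Kr\"uger \cite{FK1}---and your proposal is a correct reconstruction of the standard proof behind that cited lemma, so the approaches coincide. The two steps you verify explicitly are sound: differentiating the Hilbert--Burch syzygy $\sum_i \phi_i M_{i,l}=0$ does show that $\Phi$ lands in $\ker(\bar{M}_X^{T})=\hom(I/I^2,\cO_X)$, and the identity $M+t\dot M\equiv (I+tA)M(I+tB)\pmod{t^2}$ does give $\mathrm{Im}(g)\subseteq\ker\Phi$. Deferring surjectivity and the reverse kernel inclusion to Schaps' determinantal deformation theory and the uniqueness of the minimal free resolution is legitimate (those results do deliver exactly what you need, via $M+t\dot M=PMQ$ with $P,Q$ reducing mod $t$ to automorphisms of the resolution of $\cO_X$), and it is in any case no less detail than the paper itself supplies.
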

\begin{proof}\cite{FK1} lemma 2.6.
\end{proof}

If $X$ is determinantal, but not necessarily of codimension 2, then by considering only deformations of $X$ which arise as deformations of the presentation matrix, $M_X$, we can define $N_D(X)$, the determinantal normal module as the first order infinitesimal deformations of $X$ coming from deformations of the presentation matrix. If $X^d$ has an isolated singularity, since $N_D(X)$ sits between $JM(X)$ and $H_d(JM(X))$, it is clear that $JM(X)$ has finite colength inside $N_D(X)$, hence $e(JM(X), N_D(X))$ is well defined. As the previous proposition shows, the notions of determinantal normal module and normal module coincide for the codimension 2 case.

The landscape of determinantal singularities consists of determinantal singularities, with allowable deformations those which arise by deforming the presentation matrix of the singularity, with $N_D(X)$ as the first order infinitesimal deformations of $X$.

It is not hard to describe a set of generators for  $N_D(X)$. If the size of $M_X$ is $(n+k)\times n$, let $\Delta(l_1,\ldots,l_k)$ denote the maximal minor of $M_X$ obtained by deleting rows $(l_1,\dots,l_k)$. (We assume 
$l_i<l_{i+1}$ for $1\le i\le k-1$.) Let $\delta_{i,j}$ be the $(n+k)\times n$ matrix with $1$ in the $(i,j)$ entry, other entries $0$. Consider the deformation of $M_X$ given by $M_X+t\delta_{i,j}$. Each such deformation
gives a column in a matrix of generators for $N_D(X)$, so there are  $(n+k)n$ columns in a matrix of generators for $N_D(X)$. The $(l_1,\dots,l_k)$ entry of the $(i,j)$ column is gotten by taking the $(l_1,\dots,l_k)$ minors of  $M_X+t\delta_{i,j}$, and taking the linear part of this in $t$. This entry is denoted by $m_{i,j}(l_1,\dots,l_k)$ and is the cofactor of $M_{i,j}$ in the expansion of $\Delta(l_1,\dots,l_k)$. Of course it is $0$ if $i=l_k$ for some $k$. 

If we have a family $\cX$ of determinantal singularities defined by deforming the entries of a presentation matrix, then it is clear that $N_D(\cX)$ specializes to members of the family, and to sub-deformations. This set of generators also shows that $N_D$ is universal.

\begin{Prop} In the landscape of determinantal singularities ${\overline\Sigma}_r\subset Hom({\mathbb C}^n,{\mathbb C}^{n+k})$ is a stable singularity.
\end{Prop}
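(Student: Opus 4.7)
The plan is to exhibit the generators of $JM(\overline{\Sigma}_r)$ and of $N_D(\overline{\Sigma}_r)$ in the same explicit form and observe that they literally agree. The key observation is that when the determinantal singularity under study is itself the ``universal'' one $\overline{\Sigma}_r \subset Hom({\mathbb C}^n, {\mathbb C}^{n+k})$, its presentation matrix is the universal matrix $M_{\mathrm{univ}}=[T_{i,j}]$ whose entries are the ambient coordinates; therefore, perturbing the $(i,j)$-entry of this matrix by a parameter $s$ is exactly the same operation as translating the coordinate $T_{i,j}$ by $s$.

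First I would fix $T_{i,j}$, $1\le i\le n+k$, $1\le j\le n$, as coordinates on $Hom({\mathbb C}^n,{\mathbb C}^{n+k})$ and recall that the ideal of $\overline{\Sigma}_r$ is generated by the $(n-r+1)\times(n-r+1)$ minors $\Delta_I$ of $M_{\mathrm{univ}}$. Then $JM(\overline{\Sigma}_r)$ has one generator per coordinate $T_{i,j}$, namely the column vector $\bigl(\partial\Delta_I/\partial T_{i,j}\bigr)_I$, whose $I$-entry is the signed cofactor of $T_{i,j}$ in $\Delta_I$ (and vanishes if $T_{i,j}$ does not occur in $\Delta_I$).

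Next I would write down the generators of $N_D(\overline{\Sigma}_r)$ using the recipe given just before Proposition~3.3: they are indexed by the elementary matrices $\delta_{i,j}$, and the $I$-entry of the $(i,j)$-generator is the coefficient of $s$ in $\Delta_I(M_{\mathrm{univ}}+s\delta_{i,j})$. Since $M_{\mathrm{univ}}+s\delta_{i,j}$ is simply $M_{\mathrm{univ}}$ with $T_{i,j}$ replaced by $T_{i,j}+s$, this coefficient equals
\[
\left.\frac{d}{ds}\right|_{s=0}\Delta_I\bigl(M_{\mathrm{univ}}+s\delta_{i,j}\bigr)=\frac{\partial\Delta_I}{\partial T_{i,j}},
\]
i.e., the same cofactor that appears in $JM(\overline{\Sigma}_r)$. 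Thus the two sets of generators coincide entry-by-entry, so $JM(\overline{\Sigma}_r)=N_D(\overline{\Sigma}_r)$, which is exactly the condition for $\overline{\Sigma}_r$ to be stable.

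I do not expect any real obstacle; the argument is essentially tautological once the identification ``presentation matrix $=$ universal matrix'' is made. The only point worth a sentence of justification is that the $\delta_{i,j}$ indeed span $N_D(\overline{\Sigma}_r)$, which is immediate from the definition since any first-order perturbation of the presentation matrix is an $\mathcal{O}_{\overline{\Sigma}_r}$-linear combination of the $\delta_{i,j}$.
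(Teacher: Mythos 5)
Your proof is correct and follows essentially the same route as the paper's: both identify the presentation matrix of $\overline{\Sigma}_r$ with the universal coordinate matrix, so that perturbing the $(i,j)$-entry by $t$ is the same as translating the coordinate $T_{i,j}$, whence the derivative with respect to $t$ (the $N_D$ generator) coincides with the partial derivative with respect to $T_{i,j}$ (the $JM$ generator). Your write-up is in fact somewhat more explicit than the paper's about why the two sets of generators agree entry-by-entry.
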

\begin{proof} The minors of size $n-r+1$ are a set of generators of the ideal defining  ${\overline\Sigma}_r$. The generators of the Jacobian module they give are computed as follows: take the matrix $M_{ID} +t\delta_{i,j}$, take the column vector of minors of the desired size and take derivative with respect to $t$. This is the same as as taking linear parts in $t$. Hence $JM({\overline \Sigma}_r)=N_D({\overline \Sigma}_r)$.
\end{proof}

\begin{cor}  Suppose $S={\overline\Sigma}_r$,   $M:\mathbb C^q\to Hom({\mathbb C}^n,{\mathbb C}^{n+k})$ defines $M^{-1}(S)=X^d_M$ with expected codimension, $X^d_M$ smoothable. Suppose $\tilde M$ is the map which defines a deformation $\tilde X_{\tilde M}$ of $X_M$ to a smoothing. Then $$mult_{{\mathbb C}}\Gamma_d(N(\tilde X_{\tilde M}))=M({\mathbb C}^q)\cdot \Gamma_d({\overline\Sigma}_r).$$
\end{cor}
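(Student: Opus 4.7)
The plan is to observe that this corollary is an immediate specialization of the preceding Theorem~2.5. Theorem~2.5 asserts the desired identity $mult_{{\mathbb C}}\Gamma_d(N(\tilde X_{\tilde M}))=M({\mathbb C}^q)\cdot \Gamma_d(S)$ for \emph{any} stable singularity $S$ in place of ${\overline\Sigma}_r$, provided $M$ is proper and $M^{-1}(S)=X^d_M$ has the expected codimension.

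First, I would verify that the hypotheses of Theorem~2.5 are met in the current situation. The hypotheses on $M$, on the codimension of $X^d_M=M^{-1}(S)$, and on the existence of a smoothing deformation $\tilde M$ are already assumed in the statement of the corollary. The only remaining hypothesis is that $S$ itself be a stable singularity, and this is exactly Proposition~3.3, which was just proved by identifying the Jacobian module $JM({\overline\Sigma}_r)$ with the determinantal normal module $N_D({\overline\Sigma}_r)$ via the universal generator construction.

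With stability in hand, I would simply invoke Theorem~2.5 with $S={\overline\Sigma}_r$. The theorem's argument ran as follows: Proposition~2.4 identifies $\Gamma_d(N(\tilde X_{\tilde M}))$ with $\tilde M^{-1}(\Gamma_d(S))$ (this uses stability of $S$, since then $\Gamma_i(N(S))=\Gamma_i(C(S))$), and then by choosing the smoothing $\tilde M$ transverse to both $S$ and $\Gamma_d(S)$, the multiplicity of the preimage equals the intersection number $M({\mathbb C}^q)\cdot \Gamma_d(S)$. Substituting $S={\overline\Sigma}_r$ gives the conclusion.

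There is essentially no obstacle here, as the result is a direct corollary; the work has been done in Proposition~3.3 and Theorem~2.5. The one thing to be a bit careful about is noting that when ${\overline\Sigma}_r$ has codimension exceeding what the dimension count in Corollary~2.7 allows, the right-hand side $M({\mathbb C}^q)\cdot \Gamma_d({\overline\Sigma}_r)$ is automatically zero, and the formula still holds tautologically; in the interesting range it reduces the computation of the polar multiplicity of $N(\tilde X_{\tilde M})$ to the more tractable intersection-theoretic data of how the image of $M$ meets the polar varieties of ${\overline\Sigma}_r$, which is precisely the input that subsequent sections will use to derive formulas in terms of the presentation matrix.
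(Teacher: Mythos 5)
Your proposal is correct and matches the paper's own proof, which likewise cites the stability of ${\overline\Sigma}_r$ (Proposition 3.3) and then invokes the general theorem for sections of stable singularities (Theorem 2.5; the paper's citation of ``Theorem 3.5'' there appears to be a typo). Your additional remarks recapping the transversality argument and the vacuous case are consistent with, though not required by, the paper's one-line deduction.
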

\begin{proof} From the previous proposition,  ${\overline\Sigma}_r$ is stable and universal, hence the corollary follows by Theorem 3.5.
\end{proof}

Following Fr\"{u}bis-Kr\"uger, given $X\subset {\mathbb C}^q$ with presentation matrix $M_X$, we can consider the group action $G$ on $M_X$ given by multiplication of $M_X$ with invertible matrices on the left and right, and compositions with coordinate changes on $ {\mathbb C}^q$. The action of this group is well understood, and its extended tangent space in the sense of Mather is the quotient module of $N_D(X)$  by  $JM(X)$, which has finite colength if $X$ has an isolated singularity. The theory of these actions then shows that $M_X$ is finitely determined in the sense that perturbing the entries of $M_X$ by a sufficiently high power of the maximal ideal doesn't change $X$ up to a coordinate change of $ {\mathbb C}^q$. Further, $M_X$ has a deformation $M_{\cX}$ with smooth finite dimensional base $Y$ such that any deformation of $X$ defined by deforming the entries of $M_X$ can be induced from $M_{\cX}$.  It follows that any two smoothings of $X$ defined by deforming the entries of $M_X$ are homeomorphic.

In the next section we want to calculate the multiplicity of the polar of $N_D(\cX)$  in the case where the generic fiber of the deformation is smooth, so we want to describe $\Projan(\cR(N_D(X)))$. The description we give will apply equally to $\Projan(\cR(N_D(\cX)))$.

Since the generators of $N_D(X)$  are in one to one correspondence with the entries of $M_X$, we know that

$$\Projan(\cR(N_D(X)))\cong \cO_X[T_{i,j}]/I, \hskip 2em 1\le i \le n+k, 1\le j\le n,$$where $I$ is the ideal of relations between the $T_{i,j}$ under the map which sends $T_{i,j}$ to the ${i,j}$ generator of $\cR(N_D(X))$.

\begin{Lemme} $I$ contains the entries of the matrices $[T_{i,j}]^tM_X$ and $M_X[T_{i,j}]^t$.
\end{Lemme}
\begin{proof} We can view the map from $Mat(n+1,n;\cO_q)$ to the normal module in the proof of lemma 2.6  of \cite{FK1}, as a map from  $ \cO_X[T_{i,j}]$ to $\cR(N)$. So we can use the image of $g$ to find relations in this case by translating to $ \cO_X[T_{i,j}]$. The analogous map from $ \Mat(n+k,n;\cO_q)$ to $N_D(X)$, still carries the image of $g$ to the trivial deformations, hence to zero in $N_D(X)$. So the image of $g$ still gives elements of $I$. We claim these elements are entries of the matrices $[T_{i,j}]^tM_X$ and $M_X[T_{i,j}]^t$. We trace through the relations between these two settings to show this. Suppose $\delta_{i,j}$ is the matrix in $\Mat(n+k,n+k;\cO_q)$ with $1$ in the $(i,j)$ entry and zero elsewhere. Then the matrix $\delta_{i,j}M_X$ has the $i$-th  row as the only non-zero row  with entries $(m_{j,r})$. This gives the element $\sum_{r=1}^n m_{j,r}T_{i,r} $ in $I$ in $ \cO_X[T_{i,j}]$. In turn this is $(M_X[T_{i,j}]^t)_{j,i}$. The computation for $[T_{i,j}]^tM_X$ is similar.
\end{proof}

\begin{rem} The lemma means that the entries of $[T_{i,j}]^tM_X=0$ and $M_X[T_{i,j}]^t=0$ are some of the equations of $\Projan(\cR(N_D(X)))$. If we work at a point $x$ where $X$ is smooth, hence $M_X$ has rank $n-1$, then the values of the rows and columns of $[T_{i,j}]$ must all be in the kernel of $M_X(x)$ and $M_X^t(x)$ respectively. This implies that the entries of $[T_{i,j}]$ give a matrix of rank $1$.

\end{rem}

We will use the remark in the proof of the next theorem which will allow us to decompose the computation of $mult_{y}\Gamma_{d}(N_D(X))$ into manageable pieces.

There are two interesting transforms we can make of $X$ using $M_X$. Let $X_{n-1}$ denote the set of points  of $X$ where $M_X$ has rank $n-1$; we define:

$$X_M:=\overline{\{(x,l_1,l_2)| x\in X_{n-1}, l_1\in {\mathbb P}(ker(M^t_X(x))), l_2\in {\mathbb P}(ker(M_X(x))\}}$$
$$X_T:=\overline{\{(x,l)| x\in X_{n-1}, l\in {\mathbb P}(ker(M_X(x))\}}$$ 
 where ${\mathbb P}(ker(M_X(x))$ is the projectivization of the kernel of $M_X(x)$. Hence $X_M$ is contained in $X\times {\mathbb P}^{n+k-1}\times  {\mathbb P}^{n-1}$, while 
$X_T\subset X\times {\mathbb P}^{n-1}$. The transform $X_T$ is known as the Tjurina transform.

There is a third transform of the ambient space ${\mathbb C}^{d+k+1}$, 
$${\mathbb C}^{d+k+1}_{T^t}:= \overline{\{(x,l)| x\in{\mathbb C}^{d+k+1}, l\in {\mathbb P}(ker(M^t_X(x))\}}.$$
Hence ${\mathbb C}^{d+k+1}_{T^t}$ is contained in ${\mathbb C}^{d+k+1}\times {\mathbb P}^{n+k-1}.$

Denote the $(n+k,n)$ matrix with entries $a_{i,j}=x_{i,j}$, where  $x_{i,j}$ are coordinates on ${\mathbb C}^{n(n+k)}$ by $ID$. Then $ID$  is a biholomorphic map from ${\mathbb C}^{n(n+k)}$ to $Hom({\mathbb C}^n,{\mathbb C^{n+k}})$, so the above constructions also apply to $\overline{\Sigma}_1$. We denote $X_{ID}$ by $\tilde{\overline{\Sigma}}_1$.

We will alter the presentation matrix $M$ by dropping rows; this will induce new transforms of the three types defined above.

\begin{theorem} Suppose $X$ is a maximal rank reduced determinantal singularity, with $X_{n-1}$ dense in every component of $X$, then $\Projan \cR(N_D(X))$ is isomorphic to $X_M$ as sets.
\end{theorem}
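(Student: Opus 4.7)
My plan is to place both $X_M$ and $\Projan\cR(N_D(X))$ inside the same ambient space $X\times\mathbb{P}^{n(n+k)-1}$ via the Segre embedding, establish one set-theoretic inclusion using the matrix equations from Lemma 3.6, and then upgrade it to equality by a dimension count on each irreducible component. Under the Segre map $([l_1],[l_2])\mapsto[l_1 l_2^t]$, the product $\mathbb{P}^{n+k-1}\times\mathbb{P}^{n-1}$ embeds in $\mathbb{P}^{n(n+k)-1}$ with homogeneous coordinates $T_{i,j}=(l_1)_i(l_2)_j$---exactly the coordinates in which $\Projan\cR(N_D(X))$ naturally sits. Under this embedding, $X_M$ becomes the closure over $X_{n-1}$ of the locus of pairs $(x,[T_{i,j}])$ where $[T_{i,j}]$ is a rank-one matrix whose column space is contained in $\ker M_X^t(x)$ and whose row space is contained in $\ker M_X(x)$.

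For the first inclusion I would invoke Lemma 3.6: every $(x,[T_{i,j}])\in\Projan\cR(N_D(X))$ satisfies $[T_{i,j}]^t M_X=0$ and $M_X[T_{i,j}]^t=0$. As spelled out in the Remark following that lemma, for $x\in X_{n-1}$ these equations force the rows of $[T_{i,j}]$ into the one-dimensional $\ker M_X(x)$ and its columns into $\ker M_X^t(x)$, so $[T_{i,j}]=l_1 l_2^t$ for some $l_1\in\ker M_X^t(x)$ and $l_2\in\ker M_X(x)$; that is, $(x,[T_{i,j}])\in X_M$. Since $X_{n-1}$ is dense in $X$ and is contained in the maximal-rank locus of $N_D(X)$, taking closures in $X\times\mathbb{P}^{n(n+k)-1}$ yields $\Projan\cR(N_D(X))\subseteq X_M$.

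To upgrade this to equality I would compare dimensions componentwise. At $x\in X_{n-1}$ the $X_M$-fiber is $\mathbb{P}(\ker M_X^t(x))\times\mathbb{P}(\ker M_X(x))\cong\mathbb{P}^k\times\mathbb{P}^0\cong\mathbb{P}^k$, while the $\Projan$-fiber is $\mathbb{P}^{g-1}$ with $g$ the generic rank of $N_D(X)$. Since $JM(X)\subseteq N_D(X)\subseteq H_{d-1}(JM(X))$ and both extremes have rank $\codim X=k+1$ at smooth points of $X$, one gets $g=k+1$, so both fibers have dimension $k$. On each irreducible component of $X$ the locus $X_{n-1}$ is open and dense, and both sides are irreducible closed sets of total dimension $d+k$ (the Rees algebra of a module over an integral domain is a domain, and the closed family of rank-one matrices $l_1 l_2^t$ over an irreducible base is irreducible). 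An irreducible closed set of dimension $d+k$ contained in another of the same dimension must equal it, which gives the desired set-theoretic isomorphism on each component and hence globally. \textbf{The main obstacle} is the generic rank computation for $N_D(X)$: it is precisely the sandwiching $JM(X)\subseteq N_D(X)\subseteq H_{d-1}(JM(X))$ with known extremal ranks that secures $g=k+1$ and allows the dimension comparison to close the argument.
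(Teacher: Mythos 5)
Your proof is correct and follows essentially the same route as the paper: embed $X_M$ via the Segre map into $X\times\mathbb{P}^{n(n+k)-1}$, use the equations of Lemma 3.6 and the ensuing Remark to get one inclusion over $X_{n-1}$, and close the gap by matching dimensions ($k$ for both fibers, since the generic rank of $N_D(X)$ equals that of $JM(X)$, namely $\codim X=k+1$) together with irreducibility. The only cosmetic difference is that the paper compares the two irreducible fibers over each $x\in X_{n-1}$ directly, while you compare the irreducible total spaces component by component; both arguments rest on the same ingredients.
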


\begin{proof} Since both sets are defined by the closures of the points over $X_{n-1}$ it suffices to work on this set. Let $(S_1,\ldots , S_{n+k})$ be coordinates on ${\mathbb P}^{n+k-1}$ and $(T_1,\dots, T_{n})$ be coordinates on ${\mathbb P}^{n-1}$. Consider the Veronese embedding ${\mathbb P}^{n+k-1}\times {\mathbb P}^{n-1}$ in ${\mathbb P}^{n(n+k)-1}$ given by $(S_1,\dots, S_{n+k})\times (T_1,\dots, T_{n})$ maps to $T_{i,j}=S_i T_j$. This embedding sends the points of $X_M$ over  $x\in X_{n-1}$ to matrices of rank $1$ whose rows are multiples of a fixed non-zero kernel vector of $M_X(x)$ (hence the matrix has rank 1) and whose columns are in the kernel of $M(x)^t$. The set of such matrices  is a subvariety of dimension $(n-1)+(n+k-1)$. The fiber of $X_M$ over $x$ by the remark maps to points containing the fiber of $\Projan \cR(N_D(X))$. Further the fiber dimension of $X_M$ is clearly $k$ as the kernel of $M^t(x)$ has dimension $k+1$. Meanwhile the fiber dimension of $\Projan \cR(N_D(X))$ is one less than the rank of the Jacobian module of $X$, which is the expected codimension of $X$, so the fiber dimension is $k$ also. Since the image of the fiber of $X_M$ over  $x$ is irreducible as is the fiber of  $\Projan \cR(N_D(X))$, they are the same. Hence the closure of the image of $X_M$ is the same as $\Projan \cR(N_D(X))$.\end{proof}

\begin{cor} $C(\overline{\Sigma}_1)= \Projan  \cR(N_D(\overline{\Sigma}_1))=\tilde{\overline{\Sigma}}_1$
\end{cor}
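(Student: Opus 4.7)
The plan is to identify $C(\overline{\Sigma}_1)$, $\Projan \cR(N_D(\overline{\Sigma}_1))$ and $\tilde{\overline{\Sigma}}_1$ as the closures of a common set of fibers over the dense smooth stratum $\Sigma_1 \subset \overline{\Sigma}_1$, all three naturally sitting inside $\overline{\Sigma}_1 \times \mathbb{P}^{n(n+k)-1}$. For $\tilde{\overline{\Sigma}}_1$ the ambient embedding uses the Veronese map $\mathbb{P}^{n+k-1} \times \mathbb{P}^{n-1} \hookrightarrow \mathbb{P}^{n(n+k)-1}$ from the proof of Theorem 3.7. The common set of fiber elements will be the rank-one matrices of the form $l_1 l_2^t$ with $l_1 \in \ker(M^t)$ and $l_2 \in \ker(M)$.

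The equality $\Projan \cR(N_D(\overline{\Sigma}_1)) = \tilde{\overline{\Sigma}}_1$ is immediate from Theorem 3.7 applied to $X = \overline{\Sigma}_1$, whose presentation matrix is $ID$. One only needs the hypotheses that $\overline{\Sigma}_1$ is reduced, maximal-rank determinantal, with $\Sigma_1$ dense, all of which are classical.

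For the other equality $C(\overline{\Sigma}_1) = \Projan \cR(N_D(\overline{\Sigma}_1))$, first invoke Proposition 3.4 to replace $N_D$ by $JM$. By Lemma 3.6 together with Remark 3.8, at a point $M \in \Sigma_1$ the fiber of $\Projan \cR(N_D(\overline{\Sigma}_1))$ consists exactly of the projectivized rank-one matrices $[T_{i,j}] = l_1 l_2^t$ with $l_1 \in \ker(M^t)$ and $l_2 \in \ker(M)$. On the other hand, to compute the conormal fiber, I use the Frobenius pairing to identify $T_M^* Hom(\mathbb{C}^n,\mathbb{C}^{n+k})$ with $Hom(\mathbb{C}^n,\mathbb{C}^{n+k})$, together with the standard tangent description $T_M \Sigma_1 = \{B : B(\ker M) \subset \mathrm{image}(M)\}$. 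This yields $N_M^* \overline{\Sigma}_1 = \{A : AM^t = 0 \text{ and } M^t A = 0\}$. The equation $AM^t = 0$ forces the $(n-1)$-dimensional subspace $\mathrm{image}(M^t)$ into $\ker(A)$, so $A$ has rank at most one; combined with $\mathrm{image}(A) \subset \ker(M^t)$, any nonzero such $A$ is of the form $l_1 l_2^t$ with $l_1 \in \ker(M^t)$ and $l_2 \in \ker(M)$, matching the Projan fiber above.

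Both $C(\overline{\Sigma}_1)$ and $\Projan \cR(N_D(\overline{\Sigma}_1))$ are by definition the closures of their restrictions to the dense open $\Sigma_1$, and their fibers there agree inside $\mathbb{P}^{n(n+k)-1}$, so the two varieties coincide as subvarieties. The main bookkeeping task — and the only real delicacy — is to confirm that the Veronese-based embedding used to produce $\tilde{\overline{\Sigma}}_1$ and the Frobenius-based embedding used for $C(\overline{\Sigma}_1)$ land in $\mathbb{P}^{n(n+k)-1}$ with compatible coordinates on the $T_{i,j}$, so that the three rank-one fiber descriptions really do coincide pointwise and the closure argument goes through; once this is set up, the linear-algebra identification itself is routine.
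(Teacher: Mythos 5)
Your proposal is correct and follows essentially the same route as the paper: the equality $\Projan\cR(N_D(\overline{\Sigma}_1))=\tilde{\overline{\Sigma}}_1$ comes from the preceding theorem applied to the presentation matrix $ID$, and the equality with the conormal comes from stability ($JM(\overline{\Sigma}_1)=N_D(\overline{\Sigma}_1)$). The only difference is that you unpack the standard identification $C(X)=\Projan\cR(JM(X))$ by explicitly computing $N_M^*\overline{\Sigma}_1=\{A: AM^t=0,\ M^tA=0\}$ at smooth points, a verification the paper takes as known; your linear-algebra computation is correct.
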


\begin{proof} Since $\overline{\Sigma}_1$ is stable, $C(\overline{\Sigma}_1)= \Projan  \cR(N_D(\overline{\Sigma}_1))$ and the third equality follows from the Theorem.
\end{proof}

In the next section we will use this theorem to compute the degree over the base $Y^1$ of the polar variety of dimension $1$ of $N_D(\cX)$, where $\cX$ is the total space of the deformation, and a generic fiber is smooth. As we have seen $\Projan \cR(N_D(\cX))$ is a subset of $\cX\times {\mathbb P}^{n(n+k)-1}$. The hyperplane class on this space denoted $h$ is represented by $\cX\times H$, where $H$ is a hyperplane in $ {\mathbb P}^{n(n+k)-1}$. The hyperplane classes on $\cX\times {\mathbb P}^{n-1}$ and $\cX\times {\mathbb P}^{n+k-1}$ are denoted by $h_2$ and $h_1$ respectively. As classes, the pullback of $h$ to $\cX\times{\mathbb P}^{n+k-1}\times {\mathbb P}^{n-1}$ by the Veronese $V$ is $h_1+h_2$. Denote the fiber over the origin in $\cX$ of $\Projan \cR(N_D(\cX))$ by $E$. If $N_D(\cX)$ was an ideal, this would be the fiber of the exceptional divisor of the blow-up of $\cX$ by $N_D(\cX)$.

\begin{theorem} Suppose $\cX^{d+1}$ is a maximum rank determinantal singularity which is a smoothing of a maximal rank determinantal singularity $\cX^d$, with smooth base $Y^1={\mathbb C}^1$. The degree of the polar variety of $\Projan \cR(N_D(\cX))$ over $Y$ at the origin, $\Gamma_d( N_D(\cX))$ is $(h_1+h_2)^{d+k}\cdot E$.
\end{theorem}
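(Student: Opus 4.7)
The plan is to transfer the computation to the Veronese model $\cX_M$, where a generic linear section defining the polar curve acquires class $(h_1+h_2)^{k+d}$, and then to evaluate the degree over $Y$ by specializing a generic fibre of $\cX\to Y$ to the central fibre and checking that only $E$ contributes.

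First, I would extend Theorem 3.7 to the total space. Because $\cX$ is a smoothing, the locus $\cX_{n-1}$ where $M_\cX$ has rank $n-1$ is dense in every component of $\cX$, and the same fibre-dimension/irreducibility argument as in Theorem 3.7 shows that the Veronese
\[
V\: \cX \times \mathbb{P}^{n+k-1} \times \mathbb{P}^{n-1} \longrightarrow \cX \times \mathbb{P}^{n(n+k)-1}
\]
restricts to an isomorphism $\cX_M \xrightarrow{\sim} \Projan \cR(N_D(\cX))$. Under this identification $V^*h = h_1 + h_2$.

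Second, I would pin down the numerics. Since a generic fibre of $\cX\to Y$ is smooth and $\cX$ sits in ambient codimension $k+1$, $N_D(\cX)$ has generic rank $g=k+1$. By Proposition 2.2, $\Gamma_d(N_D(\cX))$ is the projection to $\cX$ of
\[
P := \Projan \cR(N_D(\cX)) \cap (\cX \times H_{k+d}),
\]
for a generic plane $H_{k+d}$ of codimension $g+d-1=k+d$ in $\mathbb{P}^{n(n+k)-1}$. Since $\dim \Projan\cR(N_D(\cX)) = d+k+1$, $P$ is a curve, and $P\to\Gamma_d(N_D(\cX))$ is birational. Transporting $P$ to $\cX_M$ via $V^{-1}$, each of the $k+d$ linear forms defining $H_{k+d}$ becomes a bidegree $(1,1)$ form on $\mathbb{P}^{n+k-1}\times\mathbb{P}^{n-1}$, so the class of $P$ in $\cX_M$ is $(h_1+h_2)^{k+d}$.

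Third, I would convert the degree over $Y$ into an intersection on $\cX_M$. The degree of $\Gamma_d(N_D(\cX))\to Y$ at $0$ equals the degree of $P\to Y$ at $0$, which is the intersection number of $P$ with a generic fibre $\cX_M|_{\cX(y)}$ of the composite $\cX_M\to\cX\to Y$. Because $Y$ is a smooth curve, fibres over $y$ and over $0$ are rationally equivalent in $\cX_M$, so the degree equals $P\cdot\cX_M|_{\cX(0)}$. At every smooth point $x\in\cX(0)\setminus\{0\}$, $N_D(\cX)$ has generic rank $k+1$, the fibre of $\cX_M\to\cX$ over $x$ has dimension $k$, and its intersection with a generic plane of codimension $k+d$ is empty for $d\ge 1$. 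Hence $P\cap\cX_M|_{\cX(0)}$ is supported in $E$, and $P\cdot\cX_M|_{\cX(0)} = (h_1+h_2)^{k+d}\cdot E$, which is the stated formula.

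The main technical obstacle is verifying that these Chern-class intersections on the possibly singular $\cX_M$ really compute the set-theoretic degree of $\Gamma_d$ over $Y$. This is handled by Kleiman transversality: the $\mathrm{GL}(n)\times\mathrm{GL}(n+k)$-equivariance of the Segre image lets us choose $H_{k+d}$ so that $P$ meets $\cX_M$ properly and transversely at each point outside $E$, ruling out excess contributions and yielding $\mathrm{mult}_{Y}\,\Gamma_d(N_D(\cX)) = (h_1+h_2)^{d+k}\cdot E$.
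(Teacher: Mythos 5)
Your overall route is the same as the paper's: identify $\Projan \cR(N_D(\cX))$ with $\cX_M$ via the Veronese, use $V^*h = h_1+h_2$, and evaluate the degree of the polar curve over $Y$ as an intersection number against the exceptional fibre $E$. The paper's own proof is essentially your first two steps plus the assertion that, by transversality of $h^{d+k}$ to $E$, each point of $h^{d+k}\cap E$ contributes $1$ to the degree.

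The one step whose justification fails as written is your claim that $P\cap \cX_M|_{\cX(0)}$ is supported in $E$. You argue fibre-by-fibre: over each smooth point $x\in \cX(0)\setminus\{0\}$ the fibre of $\cX_M\to\cX$ is $k$-dimensional, so a generic plane of codimension $k+d$ misses it. But $H_{k+d}$ is chosen once and for all, while $x$ ranges over a $d$-dimensional set; the union of those fibres is the strict transform $\cX(0)_M$ of the central fibre, which has dimension $d+k$ — exactly complementary to $H_{k+d}$ — so a fixed generic plane will in general meet it in finitely many points. What actually rescues the claim is the germ structure: the fibre of $\cX(0)_M$ over the origin of $\cX(0)$ has dimension at most $d+k-1$, so a generic $H_{k+d}$ misses it; since $\cX(0)_M$ is proper over $\cX(0)$ with projective fibres, $H_{k+d}$ then misses all of $\cX(0)_M$ after shrinking the representative of the germ. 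You need to insert this shrinking argument; without it the conservation-of-number computation $P\cdot \cX_M|_{\cX(0)}$ would pick up extra contributions from branches of the relative polar meeting the central fibre away from the origin, and the stated equality would fail. (A smaller point, left implicit both in your write-up and in the paper, is that $E$ must occur with multiplicity one in the fibre cycle of $\cX_M\to Y$ over $0$ for the intersection with the fibre to equal $(h_1+h_2)^{d+k}\cdot E$ rather than a multiple of it; this is absorbed into the genericity hypotheses of Section 4.)
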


\begin{proof} By definition the degree of $\Gamma_d( N_D(\cX))$  over $Y$ at the origin is the degree of the projection to $Y$ at the origin of $\Gamma_d( N_D(\cX))$. In turn, $\Gamma_d( N_D(\cX))=\pi_{\cX}(\Projan \cR(N_D(\cX))\cap h^{d+k})$. We can assume the representative of $h^{d+k}$ chosen so that it is transverse to all components of $E$ of maximal dimension which is $d+cod(X)-1=d+k$. Each point of intersection contributes $1$ to the degree, so the degree is $V^*(h^{d+k})\cdot V^*E=(h_1+h_2)^{d+k}\cdot V^* E$.
\end{proof}

Define $\Gamma_{i,j}(N_D(\cX))$ to be $\pi_{\cX}(X_M\cap h_1^ih_2^j)$. We call these the mixed polars of type $(i,j)$ of $N_D(\cX)$. Denote the degree of this mixed polar by $h_1^ih_2^j$.

\begin{cor} The degree of $\Gamma_d( N_D(\cX))$  over $Y^1$ is

$$\sum_{i=0}^{d+k}  { {d+k}\choose i} h_1^ih_2^{d+k-i}. $$

\end{cor}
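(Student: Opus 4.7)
The plan is that this corollary is essentially a direct application of the binomial theorem to the intersection product given in Theorem 3.10, combined with the definition of the mixed polars and the Veronese identification from Theorem 3.8.

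First I would recall that Theorem 3.8 gives a set-theoretic isomorphism $\Projan \cR(N_D(\cX)) \cong X_M \subset \cX \times {\mathbb P}^{n+k-1} \times {\mathbb P}^{n-1}$ realized via the Veronese map $V$, under which the hyperplane class $h$ on $\cX \times {\mathbb P}^{n(n+k)-1}$ pulls back to $h_1 + h_2$. Theorem 3.10 then expresses the degree of $\Gamma_d(N_D(\cX))$ over $Y$ at the origin as the intersection number $(h_1+h_2)^{d+k}\cdot V^*E$, where $E$ is the fiber of $\Projan \cR(N_D(\cX))$ over $0\in\cX$, identified inside $X_M$ via $V$.

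Next I would apply the binomial theorem in the Chow/cohomology ring of $\cX \times {\mathbb P}^{n+k-1}\times {\mathbb P}^{n-1}$ to expand
\[
(h_1+h_2)^{d+k} \;=\; \sum_{i=0}^{d+k} \binom{d+k}{i}\, h_1^{\,i}\, h_2^{\,d+k-i}.
\]
Intersecting both sides with the class of $V^*E$ and using linearity of the intersection product yields
\[
(h_1+h_2)^{d+k}\cdot V^*E \;=\; \sum_{i=0}^{d+k} \binom{d+k}{i}\, \bigl(h_1^{\,i}\, h_2^{\,d+k-i}\cdot V^*E\bigr).
\]
By the definition preceding the corollary, each summand $h_1^{\,i}\, h_2^{\,d+k-i}\cdot V^*E$ is exactly the degree of the mixed polar $\Gamma_{i,d+k-i}(N_D(\cX))$, which the paper denotes by the same symbol $h_1^{\,i}h_2^{\,d+k-i}$. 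Combining this with Theorem 3.10 gives the announced formula.

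There is no real obstacle here beyond bookkeeping: the genericity required to interpret each $h_1^i h_2^{d+k-i}\cdot V^*E$ as an honest degree (each point counted with multiplicity one, at smooth points of the top-dimensional strata of $E$) is already built into the choice of representatives of $h$ in the proof of Theorem 3.10, and carries over to $h_1,h_2$ via the Veronese since a generic linear system in $h$ pulls back to a generic system in $(h_1+h_2)$.
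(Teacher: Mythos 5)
Your proposal is correct and follows essentially the same route as the paper: the paper's proof likewise reduces the corollary to the observation that the degree of each mixed polar $\pi_{\cX}(X_M\cap h_1^ih_2^j)$ over $Y$ equals $h_1^ih_2^j\cdot V^*E$ by the same argument as in Theorem 3.10, and then the binomial expansion of $(h_1+h_2)^{d+k}\cdot V^*E$ gives the stated sum. Your write-up merely makes the binomial-theorem step and the genericity bookkeeping explicit, which the paper leaves implicit.
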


\begin{proof} The reasoning is similar to the proof of theorem-the degree of the $\pi_{\cX}(X_M\cap h_1^ih_2^j)$ over $Y$ is $h^i_1h^j_2\cdot V^* E$.
\end{proof}

\begin{cor}  Suppose  $M:\mathbb C^q\to Hom({\mathbb C}^n,{\mathbb C^{n+k}}) $ defines $M^{-1}({\overline\Sigma}_1)=X^d_M$ with expected codimension, $M$ transverse to the rank stratification off $0$. Suppose $\tilde M$ is the map which defines a deformation $\tilde X_{\tilde M}$ of $X_M$ to a smoothing. Then $$\sum_{i=0}^{d+k}  { {d+k}\choose i} h_1^ih_2^{d+k-i}=M({\mathbb C}^q)\cdot \Gamma_d(S)$$.
\end{cor}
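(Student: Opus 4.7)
The plan is to chain together the two immediately preceding results. Corollary 3.10 expresses $\mathrm{mult}_{Y^1}\Gamma_d(N_D(\cX))$ as the left-hand mixed-polar sum, while Corollary 3.4 equates the same polar multiplicity with the intersection number $M(\mathbb{C}^q)\cdot \Gamma_d(S)$ whenever $S$ is stable. Since $\overline{\Sigma}_1$ is stable by Proposition 3.3, setting $S=\overline{\Sigma}_1$ lets the two identifications be joined.

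First I would take $\cX=\tilde X_{\tilde M}$ as a one-parameter deformation with smooth base $Y^1=\mathbb{C}$ and generic smooth fiber, and apply Corollary 3.10 directly to obtain
\[
\mathrm{mult}_{\mathbb{C}}\Gamma_d(N_D(\tilde X_{\tilde M}))=\sum_{i=0}^{d+k}\binom{d+k}{i}h_1^{i}h_2^{d+k-i}.
\]
Here I use the fact, built into the definition of the determinantal landscape at the start of Section 3, that $N(\tilde X_{\tilde M})=N_D(\tilde X_{\tilde M})$, so that the module whose polar is computed in Corollary 3.10 is the same module appearing in Corollary 3.4.

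Next I would apply Corollary 3.4 with $r=1$. The hypotheses are met: $\overline{\Sigma}_1$ is stable by Proposition 3.3, and $M^{-1}(\overline{\Sigma}_1)=X_M^d$ has the expected codimension by assumption. The transversality of $M$ to the rank stratification off $0$ allows us to choose the smoothing $\tilde M$ to be simultaneously transverse to $\overline{\Sigma}_1$ and to $\Gamma_d(\overline{\Sigma}_1)$, which is exactly what the proof of Theorem 2.5 underlying Corollary 3.4 requires. This gives
\[
\mathrm{mult}_{\mathbb{C}}\Gamma_d(N(\tilde X_{\tilde M}))=M(\mathbb{C}^q)\cdot \Gamma_d(\overline{\Sigma}_1).
\]
Equating the two expressions for $\mathrm{mult}_{\mathbb{C}}\Gamma_d(N_D(\tilde X_{\tilde M}))$ and writing $\Gamma_d(S)=\Gamma_d(\overline{\Sigma}_1)$ yields the stated identity.

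There is no substantial obstacle here: the argument is essentially bookkeeping once the landscape identification $N=N_D$ is invoked. The only mildly delicate point worth stating explicitly is that a single smoothing $\tilde M$ can be chosen so that both Corollary 3.10 (which requires a maximum-rank determinantal smoothing with one-parameter smooth base) and Corollary 3.4 (which requires transversality of $\tilde M$ to $\overline{\Sigma}_1$ and to $\Gamma_d(\overline{\Sigma}_1)$) apply; this holds because each condition is Zariski-open on the space of one-parameter unfoldings of $M$ obtained by perturbing the entries of $M_X$, and the transversality-off-$0$ hypothesis on $M$ ensures that such generic unfoldings exist.
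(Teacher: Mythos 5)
Your proposal is correct and matches the paper's own argument, which likewise just combines the mixed-polar expansion of $\mathrm{mult}_{Y^1}\Gamma_d(N_D(\tilde X_{\tilde M}))$ from the preceding theorem and corollary with the intersection-number identity of Theorem 2.5 (equivalently, its specialization in Corollary 3.4 via the stability of $\overline{\Sigma}_1$). Your added remarks on the identification $N=N_D$ in this landscape and on choosing a single $\tilde M$ satisfying both genericity requirements are sensible elaborations of what the paper leaves implicit, not a different route.
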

\begin{proof} This follows from the previous theorem and Theorem 2.5.
\end{proof}
The degrees of the mixed polars will be computed in the next section.

\section {Computing the Degrees of the Mixed Polars }

We define the terms we use in our formula, then prove the formulas subject to some genericity assumptions. Throughout we assume the size of $M_X$ is $(n+k)\times n$, dropping the subscript on $M_X$.

Although our goal is to calculate the multiplicity of the mixed polars associated to $N(X)$, this is done by calculating  mixed polar varieties of ${\overline\Sigma_1}$. Our calculation consist of giving polynomials in the coordinates on $Hom({\mathbb C}^n,{\mathbb C}^p)$. Typically, some of these polynomials define determinantal varieties which contain the desired mixed polar as components, the next group of polynomials define a determinantal variety some of whose  components are not in the desired mixed polar, but which do appear in the previous group and so forth. In the calculation of the multiplicity of the mixed polar this gives rise to an alternating sum of multiplicities. These sets of polynomials pull back via the presentation matrix to give mixed polars of $N({\mathcal X})$, ${\mathcal X}$ a smoothing of $X$.

 In what follows, let $M_{d+k+l}$ denote the submodule of $R^r$ where $r=(n+k)-(d+k+l)=n-d-l$, whose matrix of generators consists of the first $n-d-l$ rows of  $M$;  $R$ is the ring of either $X,0$ or $\cX,0$. (The context tells which.) In the formulas, $l$ will be 
$j-i$ where $i$ is the exponent of $h_2$ and $j$ is an index.  Because $i$ is the exponent of $h_2$, we must have $0\le i\le min \{d,n-1\}$. (This follows because the dimension of the fiber of the Tjurina transform of $\cX,0$ is $\le min \{d,n-1\}$.) Notice that $i\le d$ implies $n-d+i$, the number of rows if $j=0$, is less than or equal to the number of columns. In order for the number of rows to be non-zero for $j>0$, we need  for $n-d+i-j>0$ or $j\le n-d+i-1$.

Let $M^{c,i}_{d+k-i, j}$ denote the submodule of $R^{r_c}$ where $r_c=n+k-j-1$, whose matrix of generators consists of the last $n-i$ columns of $M$ and the first $n-d+i-j-1$ rows of $M$ as well as the last $d+k-i$ rows of $M$. Notice that if $j=0$, then $M^{c,i}_{d+k-i, j}$ has $n+k-1$ rows in all. Incrementing $j$ by $1$ drops another row. Sometimes, in the last term in our sums $j=k$; in this case $M^{c,i}_{d+k-i, j}=0$. Otherwise, we  assume $j\le i+k-1$ to ensure there are at least as many rows as columns in the matrix of generators.

The assumption in these constructions is that we are using the coordinate hyperplanes as the representatives of the powers of the $h_i$. As we shall see, dropping the last $d+k-i+j$ rows of $M$ is the effect of setting the last $d+k-i+j$ of the $S_l$ to $0$. In a similar way, dropping the first $i$ columns is the effect of setting the first $i$ of the $T_l$ to $0$. 

The genericity conditions we need are of the following type: for the presentation matrix, after row and column operations, representing the $h^r_i$ by coordinate hyperplanes, then $(h_1+h_2)^{d+k}\cdot E$ has the expected dimension and $(h_1+h_2)^{d+k+1}\cdot E$ also has the expected dimension--$0$ in the case of a one parameter smoothing. There are submatrices of $M$ that appear in our formula, and they  have the analogues of the $(h_1+h_2)^{d+k+1}\cdot E$. We also ask these intersections have the expected dimension.

To give an overview of how the computation goes, we look at one of the formulas we will prove:

$$h_1^{d+k}\cdot E= \sum_{j=0}^{min(n-d-1, k)} (-1)^j M_{d+k+j}  \cdot   M^{c,0}_{d+k, j}. $$

The terms on the right hand side are both the colength of the ideal generated by the maximal minors of the presentation matrices of the modules, and the degree over $Y$ of the projection to $Y$ of the curves defined by those maximal minors on the total space of the smoothing. If $n-d-1<0$, then the sum has no terms as $h_1^{d+k}\cdot E$ is zero.

The curves defined by each term are determinantal by genericity; with the exception of the last term of the sum, they fall into two types. The term with $j=0$ contains $\Gamma_{d+k,0}$ and  a type shared with $M_{d+k+1}  \cdot   M^{c,0}_{d+k, 1} $. In turn, the second type of curve for $M_{d+k+1}  \cdot   M^{c,0}_{d+k, 1} $ is shared with the next term in the sum. The last term in the sum can have two possible forms. If $k< n-d-1$, then the presentation matrix of $M^{c,0}_{d+k, k-1} $, which is part of the penultimate term, is a square matrix, and the curves associated with the last term ($j=k)$ are defined by the maximal minors of $M_{d+k+k}$ alone by our convention. These curves are shared with the previous term.  If $0<n-d-1\le k$, then $(n+k)-(d+k+(n-d-1))=1$ so the presentation matrix of $ M_{k+n-1}$ has $1$ row, and this term has only one type of curve, shared with the previous term. It is possible that $n-d-1$ may be negative or $0$. (For example, if $n=2$.)  In this case, $\Gamma_{d+k,0}$ is empty. The genericity conditions ensure that the sum is telescoping, and its value is the degree of $\Gamma_{d+k,0}$ over $Y$.

For the computation of the degree, it is crucial that the sets we work with are determinantal; this is checked by the next lemma.

\begin{Lemme} Suppose $M$ is an $(n+k)\times n$ matrix and the coordinates on $ {\mathbb P}^{n-1}$ and $ {\mathbb P}^{n+k-1}$ are generic in the sense of the above paragraph. Then for each pair of modules $(M_{d+k+j-i},M^{c,i}_{d+k-i, j})$ the co-supports of the modules on either ${\mathbb C}^{d+k+1}$ or ${\mathbb C}^{d+k+1}\times Y$ are determinantal. Further, the co-supports of either module in the local ring of the co-support of the other is determinantal. Hence the ideal of maximal minors of both matrices of generators defines a set of dimension $1$ finite over $Y$.\end{Lemme}

\begin{proof} The co-support of either $(M_{d+k+j-i},M^{c,i}_{d+k-i, j})$ is defined by the maximal minors of the matrix of generators. The matrix of generators of $M_{d+k+j-i}$ has size $(n-d+i-j)\times n$, so the vanishing of the ideal of minors has expected codimension $n-(n-d+i-j)+1=d-i+j+1$. Meanwhile $M^{c,i}_{d+k-i, j}$ has a matrix of generators of size $(n+k-j-1)\times (n-i)$, so the vanishing of the ideal of minors has expected codimension $n+k-j-1-(n-i)+1=k+i-j$, so the expected codimension of the intersection is $(d-i+j+1)+(k+i-j)=d+k+1$ which is the dimension of the ambient space of $X$. The genericity hypotheses ensure that the expected codimensions are realized; since the ideals defining the co-supports are ideals of minors, the result follows.\end{proof}

Given a pair of modules $(M,N)$ denote by $J(M,N)$ the ideal of maximal minors of both matrices of generators.  Next we show that the  first pair of modules we look at in each sum actually contains the desired polar varieties as components of the co-support of the pair.

\begin{Lemme} $V(J(M_{d+k-i},M^{c,i}_{d+k-i, 0}))$ contains $\Gamma_{d+k-i,i}$ as a union of components. 
\end{Lemme}
\begin{proof} The expected dimension of $\Gamma_{d+k-i,i}$ is $1$; by the previous lemma this is true for  $V(J(M_{d+k-i},M^{c,i}_{d+k-i, 0}))$ as well, so it suffices to prove that  $\Gamma_{d+k-i,i}\subset V(J(M_{d+k-i},M^{c,i}_{d+k-i, 0}))$. Suppose $x\in \Gamma_{d+k-i,i}$, assume the standard coordinates on the projective spaces are generic. Then $M(x)$, the presentation matrix with entries evaluated at $x$, satisfies two conditions.

1) $ker (M^t(x))$ contains $l\subset \{0=S_{n-d+i+1}=\dots=S_{n+k}\}$.

A basis for this space is $\{e_1,\dots, e_{n-d+i}\}$ which has codimension $d+k-i$  in ${\mathbb C}^{n+k}$. This condition implies $x\in V(J(M_{d+k-i})$.

2) $ ker (M(x))$, contains $l\subset \{0=S_1=\dots=S_i\}$.  This implies that the last $n-i$ columns of $M(x)$ are linearly dependent. In turn, this implies $x\in V(J(M^{c,i}_{d+k-i, 0}))$.\end{proof}

As we mentioned earlier, the  $\Gamma_{d+k-i,i}$ associated with a particular matrix map $M\:{\mathbb C}^q\to Hom({\mathbb C}^n,{\mathbb C}^{n+k})$, is the pull back of the mixed polar of codimension $q$ on $\bar{\Sigma_1}\subset Hom({\mathbb C}^n,{\mathbb C}^{n+k})$. We want to show that the mixed polar is independent of the choices made in the definition.

Here is some notation helpful in describing the possible choices. Let $P_{n-d+i}:{\mathbb C}^{n+k}\to {\mathbb C}^{n-d+i}$ denote the linear map which is projection on the first $n-d+i$ factors, and let $I_{n-i}:{\mathbb C}^{n-i}\to {\mathbb C}^{n}$ be the linear inclusion which maps  ${\mathbb C}^{n-i}$ into the last  $n-i$ factors of ${\mathbb C}^{n}$.  Then conditions in the last proof for elements $M$ of ${\overline\Sigma_1} \subset Hom({\mathbb C}^n,{\mathbb C}^{n+k})$ become

1') $P_{n-d+i}\circ M\in {\overline\Sigma_1}\subset Hom({\mathbb C}^n,  {\mathbb C}^{n-d+i})$.
 
 2') $M\circ I_{n-i}\in {\overline\Sigma_1}\subset Hom({\mathbb C}^{n-i},  {\mathbb C}^{n+k})$.
 
 Condition 1') is equivalent to asking that first $n-d+i$ rows of $M$  be linearly dependent, while condition 2') is equivalent to the last $n-i$ columns of $M$ are linearly dependent.
 
 If $L$ and $R$ are linear isomorphisms of ${\mathbb C}^{n+k}$ and ${\mathbb C}^n$ respectively then conditions 1') and 2') applied to 
 $P_{n-d+i}\circ L$ and $R\circ I_{n-i}$ allow us to define the mixed polars of type  $\Gamma_{d+k-i,i}$ for any desired choice of hyperplanes. 
 
 \begin{Prop} All of the mixed polars $\Gamma_{d+k-i,i}$ on ${\overline{\Sigma}_1}$ in $  Hom({\mathbb C}^n,{\mathbb C}^{n+k})$ are bi-holomorphic, hence all are equally generic.\end{Prop}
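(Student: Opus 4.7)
The plan is to use the $GL(n+k)\times GL(n)$ action on $Hom({\mathbb C}^n,{\mathbb C}^{n+k})$ by $(g,h)\cdot M = gMh^{-1}$, which preserves rank and therefore restricts to an action on $\overline{\Sigma}_1$, to produce explicit biholomorphisms between any two of the mixed polars $\Gamma_{d+k-i,i}$ arising from different choices of $(L,R)$. This matches the setup of the paragraph immediately preceding the statement, where the family of polars $\Gamma_{d+k-i,i}$ is parametrized by the pair $(L,R)\in GL({\mathbb C}^{n+k})\times GL({\mathbb C}^n)$.

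First, I would translate conditions $1'$) and $2'$) applied to $P_{n-d+i}\circ L$ and $R\circ I_{n-i}$ into intrinsic subspace conditions on $M$. By dualizing, the condition $(P_{n-d+i}\circ L)\circ M \in \overline{\Sigma}_1 \subset Hom({\mathbb C}^n,{\mathbb C}^{n-d+i})$ is equivalent to the existence of a nonzero $u$ in an $(n-d+i)$-dimensional subspace $V_L \subset {\mathbb C}^{n+k}$ with $u^t M = 0$, i.e.\ to $\ker M^t \cap V_L \neq 0$. Likewise, $M\circ(R\circ I_{n-i}) \in \overline{\Sigma}_1 \subset Hom({\mathbb C}^{n-i},{\mathbb C}^{n+k})$ is equivalent to $\ker M \cap W_R \neq 0$, where $W_R := \mathrm{image}(R\circ I_{n-i}) \subset {\mathbb C}^n$ has dimension $n-i$. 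As $(L,R)$ ranges over $GL({\mathbb C}^{n+k})\times GL({\mathbb C}^n)$, the pair $(V_L,W_R)$ sweeps out every pair of subspaces of the prescribed dimensions $(n-d+i,n-i)$, so $\Gamma_{d+k-i,i}$ is intrinsically parametrized by such a pair, which I denote $\Gamma_{d+k-i,i}(V,W)$.

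Second, I would compute the effect of the group action on the kernels. A direct check yields $\ker(gMh^{-1}) = h(\ker M)$ and $\ker(gMh^{-1})^t = g^{-t}(\ker M^t)$, so the automorphism $M\mapsto gMh^{-1}$ of $\overline{\Sigma}_1$ carries $\Gamma_{d+k-i,i}(V,W)$ biholomorphically onto $\Gamma_{d+k-i,i}(g^{-t}V, hW)$. Because $GL(n+k)$ acts transitively on the Grassmannian of $(n-d+i)$-dimensional subspaces of ${\mathbb C}^{n+k}$, and $GL(n)$ on the Grassmannian of $(n-i)$-dimensional subspaces of ${\mathbb C}^n$, any two pairs $(V,W)$ and $(V',W')$ of these dimensions are related by a suitable $(g,h)$; the induced automorphism of $\overline{\Sigma}_1$ is then the desired biholomorphism between the two mixed polars.

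The main (essentially only) obstacle is the initial linear-algebra translation of conditions $1'$) and $2'$) into the clean subspace form $\ker M^t\cap V \neq 0$ and $\ker M\cap W \neq 0$; after that, the $GL$-transitivity argument is routine. The concluding assertion ``hence all are equally generic'' is then immediate, since biholomorphic analytic sets share every geometric invariant of interest (codimension, reducedness, singular locus, multiplicities, etc.), so the geometric nature of $\Gamma_{d+k-i,i}$ is independent of the particular choice of $L$ and $R$ used to define it.
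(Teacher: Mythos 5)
Your proposal is correct and takes essentially the same approach as the paper: both exploit the $GL_{n+k}\times GL_n$ action $M\mapsto L^{-1}MR^{-1}$ (your $gMh^{-1}$) on $Hom({\mathbb C}^n,{\mathbb C}^{n+k})$, which preserves the rank stratification and hence restricts to biholomorphisms of $\overline{\Sigma}_1$ carrying the mixed polar defined by one choice of $(L,R)$ onto that defined by another. Your intermediate step of rewriting conditions 1') and 2') as $\ker M^t\cap V\neq 0$ and $\ker M\cap W\neq 0$ and then invoking transitivity of the general linear groups on Grassmannians simply makes explicit what the paper obtains by direct substitution of $M'=L^{-1}MR^{-1}$ into those conditions.
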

 \begin{proof} Consider the action of $Gl_{n+k}\times Gl_n$ on $  Hom({\mathbb C}^n,{\mathbb C}^{n+k})$ given by 
 $$(M,(L,R))\longrightarrow L^{-1}MR^{-1}.$$
 Each element of  $Gl_{n+k}\times Gl_n$ induces a biholomorphic map on  $  Hom({\mathbb C}^n,{\mathbb C}^{n+k})$ which preserves the rank stratification. If $M\in \Sigma_1$, and $M$ satisfies conditions 1') and 2'), then $M'=L^{-1}MR^{-1}$ satisfies the conditions 
 
 1') $P_{n-d+i}\circ L\circ M'\in {\overline\Sigma_1}\subset Hom({\mathbb C}^n,  {\mathbb C}^{n-d+i})$
 
 and 
 
 2') $M'\circ R\circ I_{n-i}\in {\overline\Sigma_1}\subset Hom({\mathbb C}^{n-i},  {\mathbb C}^{n+k})$.

 Hence the biholomorphic map induced from $(L,R)$ carries one mixed polar to the one defined using $(L,R)$.

 \end{proof}

Since these polar varieties are all equivalent by means of the group action, it follows from Kleiman's transversality theorem  (\cite{K}), that for $M$ a matrix map defining a maximal rank smoothable singularity, that the multiplicities we compute are the generic multiplicities for $M$, perhaps after applying row and column operations to $M$, as moving the image of $M$ is equivalent to fixing the image of $M$ and moving the polars.

In the next lemma we will show that $V(J(M_{d+k-i+j},M^{c,i}_{d+k-i, j}))$ contains two types of components. We now describe what these components are for this pair of modules.  Consider the presentation matrix $M$, and delete $j$ rows by deleting rows $n-d+i, n-d+i-1,\dots, n-d+i-(j-1)$ -- denote the new presentation matrix by $M(j)$. Associated to $M(j)$, we have $\Gamma_{d+k-j-i,i} (M(j))$.  The effect of dropping $j$ rows is to subtract $j$ from $k$. Notice that if $j=0$, then $M(0)=M$ and $\Gamma_{d+k-j-i,i} (M(j))=\Gamma_{d+k-i,i} (M)$.   In general we have:

\begin{Lemme} $V(J(M_{d+k-i+j},M^{c,i}_{d+k-i, j}))$ consists of two types of components, $\Gamma_{d+k-j-i,i} (M(j))$ and $\Gamma_{d+k-(j+1)-i,i} (M(j+1))$, for $j\le min( n-d+i-1, k+i)$, $0\le i\le d$, and no curve belongs to both types.  In addition if  $j=min( n-d+i-1, k+i)$, then $\Gamma_{d+k-(j+1)-i,i} (M(j+1))$ is empty. 

\end{Lemme}

\begin {proof} Suppose $j\ne min( n-d+i-1, k+i)$. By genericity we can assume that  $\Gamma_{d+k-(j+1)-i,i} (M(j))$ is empty. The proof that $\Gamma_{d+k-j-i,i} (M(j))$  is contained in $V(J(M_{d+k-i+j},M^{c,i}_{d+k-i, j}))$ is similar to the proof of the previous lemma. Consider the matrix of generators for $M_{d+k-i+j}$. This has $n-d+i-j$ rows and $n$ columns. There are two cases --either the first $n-d+i-(j+1)$ rows are linearly independent or they are not. If they are, then all of the first $n-d+i-j$ rows are in the span of these, since $x\in V(J(M_{d+k-i+j}))$.  Then $x\in V(J(M^{c,i}_{d+k-i, j}))$ implies $ x\in V(J(M^{c,i}_{d+k-i, j-1}))$, which implies $x\in \Gamma_{d+k-j-i,i} (M(j))$. If the first $n-d+i-(j+1)$ rows are linearly dependent, then $x\notin V(J(M^{c,i}_{d+k-i, j-1}))$; if it were genericity would be violated because 
$x\in \Gamma_{d+k-(j+1)-i,i} (M(j))$. However, $x\in V(J(M_{d+k-i+(j+1},M^{c,i}_{d+k-i, j+1}))$, hence in $\Gamma_{d+k-(j+1)-i,i} (M(j+1))$.

Suppose $j= min( n-d+i-1, k+i)$, say $j=n-d+i-1$; then the matrix of generators for $M_{d+k-i+j}$ consists of a single row, so the first $n-d+i-(j+1)$ rows are linearly independent  so there is only one case.

If $j=k+i\le n-d+i-1$, then the matrix of generators for $M_{d+k-i+j}$ has $n-(k+d)$ columns, while the matrix of generators of $M^{c,i}_{d+k-i, j}$ has  $n-i$ columns, and fewer than $n-i$ rows, hence $J(M^{c,i}_{d+k-i, j})=0$, so again there is one kind of component.\end{proof}

\begin{theorem} We have
$$h_1^{d+k-i}h_2^i\cdot E= \sum_{j=0}^{min(n-d+i-1, i+k)} (-1)^j M_{d+k+j-i}  \cdot   M^{c,i}_{d+k-i, j}.  $$
\end{theorem}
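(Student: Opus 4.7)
My plan is to prove the formula by a telescoping argument, interpreting both sides as degrees over $Y$ of $1$-dimensional varieties.

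First I would identify the left-hand side geometrically. The discussion preceding Corollary 3.9 gives
$$h_1^{d+k-i}h_2^i\cdot E \;=\; \deg_Y \Gamma_{d+k-i,i}(N_D(\cX)) \;=\; \deg_Y \pi_{\cX}\bigl(X_M\cap h_1^{d+k-i}h_2^i\bigr).$$
For generic coordinate hyperplane representatives of the $h_r$ (in the sense described at the start of Section 4), Lemma 4.2 identifies this mixed polar with the curve $\Gamma_{d+k-i,i}(M)=\Gamma_{d+k-i,i}(M(0))$ on the total space.

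Next I would interpret each term on the right as a sum of two degrees. By Lemma 4.1 the co-support $V(J(M_{d+k+j-i},M^{c,i}_{d+k-i,j}))$ is a $1$-dimensional determinantal, hence Cohen-Macaulay, curve finite over $Y$. By the Gaffney-Gassler result (\cite{G-G}, Cor.~2.4--2.5) recalled in Section 2, the intersection number of the pair equals the colength of the joint ideal of maximal minors, which is the degree over $Y$ of this curve. Lemma 4.4 splits it as the union $\Gamma_{d+k-j-i,i}(M(j))\cup \Gamma_{d+k-j-1-i,i}(M(j+1))$ with no shared components, each reduced by genericity, yielding
$$M_{d+k+j-i}\cdot M^{c,i}_{d+k-i,j} \;=\; a_j+a_{j+1},\qquad a_j:=\deg_Y\Gamma_{d+k-j-i,i}(M(j)).$$

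Finally, I would telescope the alternating sum. Writing $J=\min(n-d+i-1,i+k)$,
$$\sum_{j=0}^{J}(-1)^j(a_j+a_{j+1}) \;=\; a_0 + (-1)^J a_{J+1},$$
and the last assertion of Lemma 4.4 gives $\Gamma_{d+k-J-1-i,i}(M(J+1))=\emptyset$, so $a_{J+1}=0$. The sum equals $a_0=\deg_Y\Gamma_{d+k-i,i}(M)=h_1^{d+k-i}h_2^i\cdot E$, which is the desired formula.

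The step I expect to be the main obstacle is the clean splitting $M_{d+k+j-i}\cdot M^{c,i}_{d+k-i,j}=a_j+a_{j+1}$. It requires two things: (i) identifying Serre's intersection number of the pair of modules with a scheme-theoretic length on the Cohen-Macaulay $1$-dimensional cycle $V(J(M_{d+k+j-i},M^{c,i}_{d+k-i,j}))$, which is exactly what the Gaffney-Gassler formula provides in the CM case; and (ii) verifying that each of the two polar components appears with multiplicity one and that they share no common irreducible components, which is the precise content of Lemma 4.4 together with the genericity hypotheses on the coordinate hyperplanes imposed in Section 4. Once this clean splitting is in hand, the telescoping is entirely formal.
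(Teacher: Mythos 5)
Your proof is correct and follows essentially the same route as the paper's: identify the $j=0$ term's components via Lemma 4.2, split each subsequent term into the two component types of Lemma 4.4, and let the alternating sum telescope down to the degree of $\Gamma_{d+k-i,i}$, with the final term vanishing by the last assertion of Lemma 4.4. Your explicit formula $\sum_{j=0}^{J}(-1)^j(a_j+a_{j+1})=a_0+(-1)^J a_{J+1}$ and your appeal to the Cohen--Macaulay/colength identification of the intersection numbers merely make precise what the paper's proof states more tersely.
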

\begin{proof}
If we fix $i$ then by lemma 4.3, we know that each term is the sum of two numbers, one of which is a summand of the previous term except for the first term and last terms. Since this is an alternating sum the common terms cancel. The first term in the sum by lemma 4.2 is the sum of two numbers, one of which is $h_1^{d+k+1-i}h_2^i\cdot E$, and the other is a summand in the second term, so it cancels. Again by lemma 4.3, the last term in the sum consists of a single summand shared with the previous term.
\end {proof}
\begin{cor} Assuming that coordinates are chosen generically,
$$mult_Y(\Gamma_d( N_D(\cX)))$$
$$=\sum_{i=0}^{min(d,n-1)} \sum_{j=0}^{min(n-d+i-1, i+k-1)} (-1)^j { {d+k}\choose i} M_{d+k+j-i}  \cdot   M^{c,i}_{d+k-i, j}. $$
\end{cor}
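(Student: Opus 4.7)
The plan is to chain Corollary 3.10 (the binomial expansion of $(h_1+h_2)^{d+k}\cdot E$) with Theorem 4.5 (which computes each mixed polar $h_1^{d+k-i}h_2^i\cdot E$ as an alternating sum of determinantal intersection numbers). First I would write
\[
mult_Y(\Gamma_d(N_D(\cX))) \;=\; \sum_{i=0}^{d+k}\binom{d+k}{i}\, h_1^{i}\, h_2^{d+k-i}\cdot E,
\]
and then re-index by $i\mapsto d+k-i$. Using the symmetry $\binom{d+k}{d+k-i}=\binom{d+k}{i}$, this becomes
\[
mult_Y(\Gamma_d(N_D(\cX))) \;=\; \sum_{i=0}^{d+k}\binom{d+k}{i}\, h_1^{d+k-i}\, h_2^{i}\cdot E,
\]
which is now in exactly the form where Theorem 4.5 applies summand by summand.

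Next I would truncate the outer sum to $0\le i\le \min(d,n-1)$. The summand vanishes for larger $i$ by the dimension constraints recorded just before Lemma 4.1: the fiber of $\Projan\cR(N_D(\cX))$ sits, via the Veronese embedding, inside $\mathbb{P}^{n+k-1}\times\mathbb{P}^{n-1}$, so $h_2^i\cdot E=0$ once $i\ge n$; and the bound on the Tjurina fiber dimension forces $h_1^{d+k-i}h_2^i\cdot E=0$ whenever $i>d$. Substituting Theorem 4.5 into each surviving term yields a double sum with inner index $j$ running from $0$ to $\min(n-d+i-1,\,i+k)$.

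The last step is to reconcile this inner upper bound with the stated value $\min(n-d+i-1,\,i+k-1)$. Here I would invoke the case analysis in Lemma 4.4: when $j=i+k$ the presentation matrix of $M^{c,i}_{d+k-i,j}$ has more columns than rows, so $J(M^{c,i}_{d+k-i,j})=0$. Under the genericity conventions fixed at the start of Section 4 the intersection number $M_{d+k+j-i}\cdot M^{c,i}_{d+k-i,j}$ is then zero, since the surviving component coming from $M_{d+k+j-i}$ alone is already the shared summand telescoped against the $j=i+k-1$ term. Dropping this vanishing boundary term gives the claimed upper bound and the stated formula. The main obstacle is purely bookkeeping: all the geometric content is already encoded in Corollary 3.10, Theorem 4.5, and Lemma 4.4, and no further transversality argument beyond the genericity of coordinates assumed in Lemma 4.1 is needed.
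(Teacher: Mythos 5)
Your route is the same as the paper's: the printed proof is a one-line citation of the preceding theorem (the formula for $h_1^{d+k-i}h_2^i\cdot E$) combined with the binomial expansion $mult_Y(\Gamma_d(N_D(\cX)))=\sum_i\binom{d+k}{i}h_1^ih_2^{d+k-i}$ from Section 3, and your re-indexing $i\mapsto d+k-i$ together with the truncation of the outer sum to $0\le i\le\min(d,n-1)$ (via the bound on the fiber dimension of the Tjurina transform recorded just before Lemma 4.1) is exactly the intended bookkeeping.

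The one step that does not hold up is your reconciliation of the inner upper bound. You assert that the $j=i+k$ term $M_{d+k+j-i}\cdot M^{c,i}_{d+k-i,j}$ vanishes because the matrix of generators of $M^{c,i}_{d+k-i,j}$ has more columns than rows. Under the paper's conventions this is not so: when $J(M^{c,i}_{d+k-i,j})=0$, the overview of Section 4 declares that the curves of the last term are cut out by the maximal minors of $M_{d+k+j-i}$ alone, and Lemma 4.4 identifies this last locus as the single component $\Gamma_{d+k-j-i,i}(M(j))$ --- which is precisely the spurious summand of the $j=i+k-1$ term that must be cancelled for the telescoping in Theorem 4.5 to close. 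Discarding it would in general change the value of the sum. The honest resolution is that the bound $\min(n-d+i-1,\,i+k-1)$ printed in this corollary is inconsistent with Theorem 4.5 and with the immediately following corollaries, all of which use $\min(n-d+i-1,\,i+k)$; direct substitution of Theorem 4.5 produces the latter bound. The two bounds coincide exactly when $n\le d+k$ (true in the space-curve example and in most cases where the singularity has positive codimension exceeding the number of columns), which is why the discrepancy is usually invisible --- but your argument for the smaller bound, as stated, contradicts Lemma 4.4 rather than following from it.
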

\begin{proof} This follows from the previous theorem and Cor. 3.7.
\end{proof}

 \begin{cor} Let $s$ denote the minimum of $ \{n-d+i-1, i+k\}$. Assuming that coordinates are chosen generically,
$$mult_Y(\Gamma_d( JM_z(\cX)))=$$
$$e(JM(X),N_D(X))+\sum_{i=0}^{min(d,n-1)} \sum_{j=0}^s (-1)^j { {d+k}\choose i} M_{d+k+j-i}  \cdot   M^{c,i}_{d+k-i, j}. $$
\end{cor}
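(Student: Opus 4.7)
The plan is to apply the Multiplicity Polar Theorem (Theorem 2.5) to the pair of relative modules $(JM_z(\cX), N_D(\cX))$ on the total space of the smoothing $\cX \to Y = \mathbb{C}$, and then invoke Corollary 4.5 to replace $\mathrm{mult}_Y \Gamma_d(N_D(\cX))$ by its explicit alternating sum.

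First I would check the hypotheses of Theorem 2.5. Since $X$ has an isolated singularity and $\cX$ is a smoothing, $\overline{JM_z(\cX)}$ and $\overline{N_D(\cX)}$ agree on the smooth locus of $\cX$, where both equal the full normal module. The defect locus $C$ is thus the singular locus of $\cX$, which shrinks to $\{0\}$ on the central fiber and is empty on the generic fiber; so $C$ has dimension $k=1$ and is finite over $Y$. The two modules share a common generic rank, and the specialization condition $C(\Projan\mathcal{R}(JM_z(\cX)))(0) = C(\Projan\mathcal{R}(JM(X)))$ off the origin follows because $JM_z(\cX)$ restricts to $JM(X)$ on the central fiber and $N_D$ is universal (as recalled in Section 3, so $N_D(\cX)$ restricts to $N_D(X)$).

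Next I would evaluate $\Delta(e(JM_z(\cX), N_D(\cX)))$ in the sense of Theorem 2.5. On the central fiber the pair restricts to $(JM(X), N_D(X))$, contributing $e(JM(X), N_D(X))$. At a generic $y \in Y$ the fiber is smooth, so $JM_z(\cX_y) = N_D(\cX_y)$ and the pair multiplicity vanishes there. Hence $\Delta = e(JM(X), N_D(X))$. Theorem 2.5 then gives
\[
e(JM(X), N_D(X)) \;=\; \mathrm{mult}_Y \Gamma_d(JM_z(\cX)) \,-\, \mathrm{mult}_Y \Gamma_d(N_D(\cX)),
\]
equivalently
\[
\mathrm{mult}_Y \Gamma_d(JM_z(\cX)) \;=\; e(JM(X), N_D(X)) \,+\, \mathrm{mult}_Y \Gamma_d(N_D(\cX)).
\]
Substituting Corollary 4.5 for the second summand yields the stated identity.

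The algebra is routine; the one delicate point is the specialization hypothesis of the Multiplicity Polar Theorem for $JM_z(\cX)$, namely that the fiber over $0$ of the non-free locus of $\Projan\mathcal{R}(JM_z(\cX))$ coincides off the origin with the non-free locus of $\Projan\mathcal{R}(JM(X))$. For the generic coordinate choices inherited from Section 4 (the same genericity underlying Theorem 4.4), this is guaranteed because the transversality ensures no spurious components appear, and the isolated-singularity hypothesis on $X$ confines all extra contributions to the origin, where they are absorbed into the jump $\Delta$.
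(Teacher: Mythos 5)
Your proof is correct and follows exactly the route the paper takes: the paper's own proof of this corollary is the one-line observation that it "follows from the previous corollary and the multiplicity polar theorem," which is precisely your argument of applying Theorem 2.5 to the pair $(JM_z(\cX), N_D(\cX))$, noting $\Delta = e(JM(X),N_D(X))$ because the generic fiber is smooth, and substituting Corollary 4.5. Your additional verification of the hypotheses of the Multiplicity Polar Theorem is a reasonable fleshing-out of details the paper leaves implicit.
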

\begin{proof} This follows from the previous corollary and the multiplicity polar theorem.
\end{proof}
 \begin{cor} Let $s$ denote the minimum of $ \{n-d+i-1, i+k\}$. Assuming that coordinates are chosen generically, and $H$ is not a limiting tangent hyperplane to $X$ at the origin,
$$(-1)^d\chi(X_s)+(-1)^{d-1} \chi((X\cap H)_s)=  $$
$$e(JM(X),N_D(X))+\sum_{i=0}^{min(d,n-1)} \sum_{j=0}^s (-1)^j { {d+k}\choose i} M_{d+k+j-i}  \cdot   M^{c,i}_{d+k-i, j}. $$
\end{cor}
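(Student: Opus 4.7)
The plan is to identify, using Corollary 4.6, the right-hand side with the polar multiplicity $\mathrm{mult}_Y\Gamma_d(JM_z(\cX))$ of the relative Jacobian module, and then to show this quantity equals the left-hand side via a Morse-theoretic count on a smoothing.

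First I would reinterpret $\Gamma_d(JM_z(\cX))$ geometrically. The generators of $JM_z(\cX)$ are the partials over $Y$ of a set of defining equations of $\cX$, so intersecting $\Projan\cR(JM_z(\cX))$ with a generic plane $H_{d+g-1}$ and projecting back to $\cX$ picks out, along the smooth locus of the fibers, the critical locus of a generic linear form $\ell\:\cX\to\mathbb{C}$ restricted to the fibers of $\cX\to Y$. For a generic $y\in Y$ the fiber $X_s=\cX_y$ is smooth, and $\Gamma_d(JM_z(\cX))\cap X_s$ is the finite set of critical points of $\ell|_{X_s}$, each of which is Morse and counted once by the usual genericity. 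Hence $\mathrm{mult}_Y\Gamma_d(JM_z(\cX))$ equals the number of Morse critical points of $\ell|_{X_s}$.

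Second, since $H=\{\ell=0\}$ is by hypothesis not a limiting tangent hyperplane to $X$ at $0$, for a small regular value $c$ of $\ell|_{X_s}$ the slice $\ell^{-1}(c)\cap X_s$ has the homotopy type of the smoothing $(X\cap H)_s$ of $X\cap H$. Ordinary Morse theory on the smooth $d$-manifold $X_s$ inside the Milnor tube then exhibits $X_s$ as obtained from this slice by attaching one $d$-cell per Morse critical point of $\ell|_{X_s}$, whence
$$\chi(X_s)-\chi((X\cap H)_s)=(-1)^d\cdot\#\{\text{Morse critical points of }\ell|_{X_s}\}.$$
Rearranging, the critical point count equals $(-1)^d\chi(X_s)+(-1)^{d-1}\chi((X\cap H)_s)$, and combining with step one and Corollary 4.6 yields the asserted identity.

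The main obstacle is step one, together with the boundary control needed in step two: one must justify that each branch of the relative polar curve over $Y$ contributes exactly one transverse Morse critical point inside the Milnor tube, with no critical points escaping through its boundary. The hypothesis that $H$ is not a limiting tangent hyperplane is precisely the input that rules out such boundary critical points and that identifies $\ell^{-1}(c)\cap X_s$ with $(X\cap H)_s$ up to homotopy. Compatibility between the genericity used to define $\Gamma_d$ and the genericity required for the Morse count on $X_s$ follows from a Kleiman-type transversality argument, of the same flavor already used in Theorem 4.4 and Corollary 4.5.
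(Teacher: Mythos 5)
Your proposal is correct and follows essentially the same route as the paper: the paper's proof is simply Corollary 4.6 combined with the cited identity $mult_Y(\Gamma_d(JM_z(\cX)))=(-1)^d\chi(X_s)+(-1)^{d-1}\chi((X\cap H)_s)$ from \cite{Gaff1} and \cite{BOT}. The only difference is that you unpack the proof of that cited identity via the standard Morse-theoretic argument on the smoothing, which is exactly how it is established in those references.
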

\begin{proof} This follows from the previous corollary and the fact that $$mult_Y(\Gamma_d( JM_z(\cX)))=(-1)^d\chi(X_s)+(-1)^{d-1} \chi((X\cap H)_s).$$ (Cf. \cite{Gaff1} p 130, \cite {BOT}.) \end{proof}

From this formula, in the usual way we can get a formula for the Euler characteristic of a smoothing of a maximal rank determinantal singularity, for the given presentation matrix. If we take hyperplane slices of $X$ all of the above modules specialize. Let $J_q$ denote the Jacobian module of $X\cap l_q$ where $l_q$ is a generic plane of codimension $q$, let $N_q$ denote $N_D(X\cap l_q)$, $M(q)_i$ and $M(q)^{c,i}_{s,t}$ the modules corresponding to the $M_i $ and $M^{c,i}_{s,t}$. Then we get:

\begin{cor} Let $t$ the minimum of $\{d-q, n-1\}$, and  $s$ denote the minimum of $ \{n-d+i-1, i+k\}$. Assuming that coordinates and hyperplanes are chosen generically,
$$(-1)^d\chi(X_s)= \sum_{q=0}^d (-1)^q e(J_q,N_q)+$$
$$\sum_{q=0}^d\sum_{i=0}^t \sum_{j=0}^s (-1)^{q+j}  { {d-q+k}\choose i} M(q)_{d-q+k+j-i}  \cdot   M(q)^{c,i}_{d-q+k-i, j}. $$
\end{cor}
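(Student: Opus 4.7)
The plan is to iterate Corollary 4.7 on successive generic hyperplane slices and sum with alternating signs so that the topological contributions telescope. Fix a chain of generic linear subspaces $l_0 \supset l_1 \supset \cdots \supset l_d \supset l_{d+1}$ with $\operatorname{codim} l_q = q$, chosen so that for every $q$ the slice $X \cap l_q$ is a maximal rank determinantal singularity of dimension $d-q$ with an isolated singularity at the origin, its smoothing is obtained by restricting the chosen smoothing of $X$, and the hyperplanes involved are not limiting tangent hyperplanes of $X \cap l_q$. The restrictions $M(q)_i$ and $M(q)^{c,i}_{s,t}$ are the obvious specializations of the modules $M_i$ and $M^{c,i}_{s,t}$ to $X \cap l_q$, and $J_q=JM(X \cap l_q)$, $N_q=N_D(X \cap l_q)$.

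Now apply Corollary 4.7 to the slice $X \cap l_q$ (in place of $X$, with dimension $d-q$ in place of $d$, and with $X \cap l_{q+1}$ playing the role of the generic hyperplane section of $X \cap l_q$). This yields, for each $0 \le q \le d$,
\begin{equation*}
(-1)^{d-q}\chi((X\cap l_q)_s) + (-1)^{d-q-1}\chi((X\cap l_{q+1})_s) \;=\; e(J_q,N_q) \;+\; \Sigma_q,
\end{equation*}
where $\Sigma_q := \sum_{i=0}^{t}\sum_{j=0}^{s}(-1)^j \binom{d-q+k}{i} M(q)_{d-q+k+j-i}\cdot M(q)^{c,i}_{d-q+k-i,j}$ with $t=\min(d-q,n-1)$ and $s=\min(n-d+i-1,i+k)$.

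Multiply the $q$-th equation by $(-1)^q$ and sum from $q=0$ to $q=d$. Writing $b_q := \chi((X \cap l_q)_s)$ with the convention $b_{d+1}=0$ (since $X \cap l_{d+1}$ is empty as $l_{d+1}$ has codimension $d+1$ in the ambient space), the left hand side becomes
\begin{equation*}
\sum_{q=0}^{d} (-1)^q\bigl[(-1)^{d-q} b_q + (-1)^{d-q-1} b_{q+1}\bigr] \;=\; (-1)^d\sum_{q=0}^{d}\bigl(b_q - b_{q+1}\bigr) \;=\; (-1)^d\bigl(b_0 - b_{d+1}\bigr) \;=\; (-1)^d\chi(X_s),
\end{equation*}
by telescoping. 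The right hand side is $\sum_{q=0}^{d}(-1)^q e(J_q,N_q) + \sum_{q=0}^{d}(-1)^q \Sigma_q$, which after absorbing the extra sign into the $(-1)^j$ is exactly the triple sum in the statement with $(-1)^{q+j}$.

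The main technical point, and what I expect to be the only real obstacle, is justifying that the chain of slices can be chosen so that the hypotheses of Corollary 4.7 hold at every stage simultaneously: each $X \cap l_q$ must remain a maximal rank determinantal singularity of expected codimension with isolated singularity, the induced deformation $\cX \cap (l_q \times Y)$ must remain a smoothing, the cutting hyperplanes must avoid the limit tangent hyperplanes, and the coordinate choices must stay generic enough that the formulas of Corollary 4.6 apply to each slice. Since at each step we intersect with a hyperplane in a nonempty Zariski-open condition (transversality to the finite family of relevant strata, polars, and limit tangent hyperplanes of all the intermediate slices), these conditions can be realized simultaneously by choosing $l_1,\ldots,l_d$ generically one at a time, which is exactly what is meant by ``coordinates and hyperplanes are chosen generically'' in the statement.
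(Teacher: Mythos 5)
Your proposal is correct and follows exactly the paper's (very terse) argument: apply Corollary 4.7 to each successive generic slice $X\cap l_q$, weight by $(-1)^q$, and sum so that the Euler characteristic terms telescope down to $(-1)^d\chi(X_s)$. The only additional content you supply is the explicit verification of the telescoping signs and the genericity of the chain of slices, both of which the paper leaves implicit.
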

\begin{proof} This follows from the previous corollary by adding the terms from the corresponding formulas, thereby creating a telescoping sum.
\end{proof}

Now we would like to give some examples. 

In the first example, we calculate the multiplicity over $Y$ of the relative  polar curve of a smoothing of the family of space curves $X_l$. This will entail calculating the multiplicity of the pair $(JM(X_l), N_D(X_l)$. This calculation will be facilitated by a theorem of Steven Kleiman (\cite{K2}). We will also need the multiplicity over $Y$ of the polar curve of $N_D(\cX_l)$.

The singularities $X_l$ are defined by the minors of 
$$M_{X_l}=\begin{bmatrix} z&x\\
                                         y&z\\
                                          x^l&y
\end{bmatrix}$$

We assume $l-1$ is not divisible by $3$.

For the formula for the multiplicity over $Y$ of the polar curve of $N_D(\cX_l)$, we have $n=2$, $k=1$, $d=1$, so Corollary 3.7 becomes
$$\sum_{i=0}^{2}  { {2}\choose i} h_1^ih_2^{2-i}=h_1^2+2h_1h_2+h_2^2. $$

By Theorem 4.4 we have:
$$h_1^2= M_{d+k}  \cdot   M^{c,0}_{d+k, 0} =M_{2}\cdot M^{c,0}_{2,0}$$
$$h_1h_2= M_{d+k-1}  \cdot   M^{c,1}_{d+k-1, 0} - M_{d+k}  \cdot   M^{c,1}_{d+k-1, 1}=M_1\cdot  M^{c,1}_{1, 0}-M_{2}  \cdot   M^{c,1}_{1, 1}$$ 

In this example our matrix is generic enough; no operations on rows and columns are needed.

We have  $M_{2}\cdot M^{c,0}_{2,0}$ is the colength of the elements of the first row and the determinant of the last two rows which is the colength of $(x,z,y^2-xz)$ which is 2.

Denoting the $(i,j)$ entry of $M$ by $a_{i,j}$, we have $M_1\cdot  M^{c,1}_{1, 0}$ is the colength of the ideal generated by $(a_{1,1}a_{2,2}-a_{1,2}a_{2,1},  a_{1,2}, a_{3,2})=(a_{1,1}a_{2,2}, a_{1,2}, a_{3,2})=(z^2,x,y)$ which is $2$. 
We also have  $M_{2}  \cdot   M^{c,1}_{1, 1}$ is the colength of the ideal generated by $(a_{1,1}, a_{1,2}, a_{3,2})=(z,x,y)$, which is $1$. 
this means that for all $l$ the multiplicity over $Y$ of the polar curve of $N_D(\cX_l)=2\cdot (2-1)+2=4$, which is $m+1$ since $m$ for these curves is $3$.

Notice that in general we see that the ideal $(a_{1,1}a_{2,2}, a_{1,2}, a_{3,2})$ has two groups of components; those of $ (a_{1,1}, a_{1,2}, a_{3,2})$ and of $(a_{2,2}, a_{1,2}, a_{3,2})$. It is those of the second type that make up the $\Gamma_{1,1}$. Those of the first type are subtracted in the second term.

We must now calculate $e(JM(X_l),N_D(X_l))$. We will show it is $2l-2$.

Tracing through the connection between the deformations of the presentation matrix and the elements of $N_D$, we get that a matrix of generators $M[N_D]$, of $N_D$ is:

$$M[N_D]=\begin {bmatrix} z&-x&0&-y&z&0\\
0&y&-z&0&-x^l&y\\
y&0&-x&-x^l&0&z\end{bmatrix}.$$

Because $X$ is weighted homogeneous with weights $(3,2l+1,l+2)$, $X$ has a parameterization by
$r(t)=(t^3,t^{2l+1},t^{l+2})$. For our computations it will be convenient to work over $\cO_1$ by pulling back our modules by $r^*$.

Pulling back the matrix of generators $M[N_D]$, we obtain:

$$r^*(M[N_D])=\begin {bmatrix} t^{l+2}&-t^3&0&-t^{2l+1}&t^{l+2}&0\\
0&t^{2l+1}&-t^{l+2}&0&-t^{3l}&t^{2l+1}\\
t^{2l+1}&0&-t^3&-t^{3l}&0&t^{l+2}\end{bmatrix}.$$

As this matrix has generic rank $2$, $r^*(N_D)$ can be generated freely by $2$ generators, so a matrix of generators $R_N$ of $r^*(N_D)$ with a minimal number of columns is

$$R_N=\begin {bmatrix} -t^3&0\\
t^{2l+1}&-t^{l+2}\\
0&-t^3\end{bmatrix}.$$

A calculation shows that $r^*(JM(X))$ is generated by the columns of:

$$R_{JM}=\begin {bmatrix} -t^3&2t^{l+2}\\
2t^{2l+1}&-t^{3l}\\
t^{l+2}&t^{2l+1}\end{bmatrix}.$$

Note that $$R_{JM}=R_N \begin {bmatrix}1&-2t^{l-1}\\
-t^{l-1}&-t^{2l-2}
\end{bmatrix}.$$

Since $r^*(N_D)$ is freely generated, $e(r^*(JM(X)),r^*(N_D))$ is the colength of the submodule of $\cO^2_1$ generated by the columns of the $2\times 2$ matrix in the last equation. This is the colength of the determinant of the matrix of generators which is $2l-2$ as claimed. By the result of Kleiman mentioned above, $e(r^*(JM(X)),r^*(N_D))=e(JM(X),N_D)$.

 By results of Watanabe et al \cite{NNW},  $\delta(X_l)=l$. Hence the Milnor number is $2\delta=2l$, so $e(JM(X_l),N_D(X_l))=\mu-2$, and 
so  $e(JM(X_l),N_D(X_l))$ plus the multiplicity over $Y$ of the polar curve of $N_D(\cX_l)$ is $\mu-2+m+1=\mu+m-1$, which is  the multiplicity over $Y$ of the relative  polar curve of a smoothing of the family of space curves $X_l$ as predicted.

\section{Consequences for equisingularity--the Whitney conditions, $W_f$, $A_l$ and $A_f$ conditions.}

In this section we will apply the results from previous sections to provide criteria for various equisingularity conditions. We first review the conditions we will study, and recall some basic properties.

The conditions we will study are concerned with the relation between tangent planes 
plane to a stratum and limiting
tangent hyperplanes from higher-dimensional strata, so we need a way of
measuring distance between linear spaces.
  Suppose $A$, $B$ are linear subspaces at the origin  in ${\mathbb C}^{N} $,
then define the distance from
$A$ to $B$ as:
                         $${ {\rm{dist }}}(A,B) =
\mathop {\sup}\limits_{\begin{matrix} {u\in {B^{\perp}}-\{0\}}\\{v\in
{A-\{0\}}}\end{matrix}} {{|(u,v)|} \over {\left\| u
\right\|\left\| v \right\|}}.$$

        In the applications $B$ is the ``big'' space and $A$ the ``small''
space.
        (Note that ${\rm{dist }}(A,B)$ is not in general the same as
${\rm{dist }}(B,A)$.)

We recall Verdier's condition $W$. Suppose
$Y\subset
\Bar X$, where $X,Y$ are strata in a stratification of an analytic space,
and $
{\rm{dist }}(TY_0, TX_x) \le C
{\rm{dist }}(x,Y)$ for all $x$ close to $Y$.   Then the pair $(X,Y)$
satisfies {\it Verdier's condition} $W$ at $0 \in Y$.

The Whitney conditions are also used to describe the incidence relation of two strata; however Teissier proved  condition $W$ is  equivalent to these two Whitney
conditions in the complex analytic
case \cite{T-2},V.1.2,
 so we will use the two terms interchangeably.

The following result connects theory of integral closure and the condition $W$.

\begin{Prop}  ({\cite{G-2}, Theorem 2.5}) Suppose $(X^{d+k},0) \subset
({\mathbb C}^{n+k},0)$ is an equidimensional complex
analytic set, $X = F^{-1}(0)$, $Y$ a smooth subset of $X$,
$F: {\mathbb C}^{n+k}\to {\mathbb C}^p$, coordinates  chosen so
that ${{\mathbb C}}^k \times {0} = Y$, $m_Y=(z_1, ...,z_n)$ denoting the ideal
defining $Y$,
$F$ defines $X$ with reduced structure.   Then, ${{\partial F} \over
{\partial s}}\in \overline
{m_{n}JM(F)}$ for all tangent vectors ${\partial } \over {\partial s}$ to
$Y$ if and only if W holds for
$(X_0,Y)$.\end{Prop}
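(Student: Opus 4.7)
The plan is to use the curve criterion for integral closure of modules stated in the introductory discussion of Section~2: $\partial F/\partial s \in \overline{m_nJM(F)}$ if and only if for every analytic arc $\phi\colon(\mathbb C,0)\to(\mathbb C^{n+k},0)$ the pullback $(\partial F/\partial s)\circ\phi$ lies in $\phi^\ast(m_nJM(F))\cdot\mathcal O_1$. Writing $\phi(t)=(z(t),s(t))$ where $z=(z_1,\dots,z_n)$ are coordinates transverse to $Y$, the ideal $\phi^\ast m_n$ is $(z_1(t),\dots,z_n(t))\subset\mathcal O_1$, and the order of this ideal in $t$ equals, up to a bounded factor, the order of $\mathrm{dist}(\phi(t),Y)$. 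So the integral closure condition becomes: the $p$-vector $(\partial F/\partial s)(\phi(t))$ admits an expression
\[
(\partial F/\partial s)(\phi(t)) \;=\; \sum_{i,j} z_i(t)\,g_{ij}(t)\,(\partial F/\partial x_j)(\phi(t)),\qquad g_{ij}\in\mathcal O_1,
\]
with the $x_j$ running over all ambient coordinates. By a standard reduction the curves that matter are those whose generic point lies in the smooth locus of $X$ (see the remark following Definition~2.1), so one may assume $\mathrm{rank}\,DF(\phi(t))=p$ generically.

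Next, I would translate Verdier's condition $W$ into an estimate along such curves. Along a smooth point of $X$ the conormal space is the row span of $DF$, so
\[
\mathrm{dist}(T_yY,T_{\phi(t)}X)\;=\;\sup_{a\neq 0}\frac{|\langle\sum_i a_i\nabla f_i(\phi(t)),\,\partial/\partial s\rangle|}{\bigl\|\sum_i a_i\nabla f_i(\phi(t))\bigr\|}\;=\;\sup_{a\neq 0}\frac{\bigl|\sum_i a_i\,(\partial f_i/\partial s)(\phi(t))\bigr|}{\bigl\|\sum_i a_i\nabla f_i(\phi(t))\bigr\|}.
\]
Condition $W$ asserts this quantity is $O(|z(t)|)$ along every arc with $z(t)\to 0$. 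By the curve selection lemma applied to the conormal modification, $W$ fails for $(X_0,Y)$ at $0$ precisely when some analytic arc $\phi(t)$ together with a choice of covectors $u(t)=\sum a_i(t)\nabla f_i(\phi(t))$ in the conormal fiber violates this bound.

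Finally, I would bridge the two conditions via the pairing between $(\partial F/\partial s)(\phi(t))$ and linear forms on $\mathbb C^p$. The integral closure statement above holds if and only if, for every linear form $a=(a_1,\dots,a_p)$ with entries in $\mathcal O_1$, the order in $t$ of $\sum_i a_i\,(\partial f_i/\partial s)(\phi(t))$ is at least $\mathrm{ord}(z(t))$ plus the order of $\sum_i a_i\nabla f_i(\phi(t))$ viewed in $\phi^\ast JM(F)$; this is exactly the order‐theoretic reformulation of the Verdier estimate displayed above. Combining this equivalence over all curves $\phi$ proves the proposition in both directions.

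The main obstacle is the duality step in the last paragraph: integral closure of a submodule of $\mathcal O^p$ is naturally an ``upper bound on coefficients'' statement, while the distance formulation of $W$ is a ``lower bound on covectors'' statement, and marrying the two requires the Kleiman--Thorup/Teissier type identification of order of vanishing in $\mathcal R(JM(F))$ with the sup over unit conormals. Once this is in place the rest is bookkeeping with arcs and valuations, and one recovers the statement attributed to \cite{G-2}.
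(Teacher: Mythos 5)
The paper does not actually prove this proposition: it is quoted from \cite{G-2}, and the ``proof'' in the text is just the citation to Theorem~2.5 there. Your outline is, in substance, a reconstruction of the argument of that reference: reduce both sides to statements along arcs, compute $\mathrm{dist}(TY_0,TX_x)$ at smooth points of $X$ as a supremum over conormal covectors $\sum_i a_i\nabla f_i$, and match the resulting estimate $\bigl|\sum_i a_i(\partial f_i/\partial s)\bigr|\le C\|z\|\,\bigl\|\sum_i a_i\nabla f_i\bigr\|$ against membership of $\partial F/\partial s$ in $\overline{m_YJM(F)}$. Two small slips: the test curves in the curve criterion should map into $(X,0)$, not into $(\mathbb C^{n+k},0)$, since $JM(F)$ is a submodule of $\mathcal O_X^p$ (the remark after Definition~2.1 lets you restrict further to curves generically in the smooth locus); and the generic rank of $DF$ along $X$ is the codimension $n-d$ of $X$, which equals $p$ only in the complete intersection case --- what matters is only that the rows of $DF$ span the conormal space at smooth points.

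The genuine issue is the step you yourself flag as ``the main obstacle.'' Condition $W$ is a uniform pointwise inequality on a punctured neighborhood, while the curve criterion only yields order inequalities along individual arcs; and dually, integral closure of a vector in a submodule of $\mathcal O_1^p$ must be tested against all $\mathcal O_1$-linear functionals (Smith normal form over the discrete valuation ring), with the functional $a(t)$ allowed to depend on the arc. Passing between these requires the analytic (pointwise sup-norm) characterization of the integral closure of a module and its equivalence with the valuative criterion --- the module analogue of the Lejeune-Jalabert--Teissier theorem, which is Proposition~1.11 and its consequences in \cite{G-2}. Your appeal to ``curve selection on the conormal modification'' points in the right direction for one implication (a failure of $W$ is detected on an analytic subset of the blowup $B_{m_Y}(\Projan\mathcal R(JM(F)))$, hence along a curve there, which supplies both $\phi(t)$ and the covector path $a(t)$ simultaneously), but the converse implication --- from the order inequalities to the uniform constant $C$ --- is exactly the content of that equivalence and cannot be dismissed as bookkeeping. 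So the proposal is a faithful sketch of the intended proof, but as written it assumes the one theorem that does the real work; to make it self-contained you would need to import or reprove the analytic-versus-valuative equivalence for integral closures of modules from \cite{G-2}.
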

\begin{proof} See  ({\cite{G-2}, Theorem 2.5})\end{proof}

If $f$ is the germ of a complex analytic mapping defined on the closure  of
a stratum $X$, then it is useful
to have a notion of condition $W$ relative to $f$; this is obtained from
the above definition of  $W$ by  replacing the tangent
space of $X$  by the tangent space of the fiber of $f$ at points where $f$
is a submersion onto its image.
The {\it $W_f$ condition}  holds  for the pair $(X,Y)$  when the new
condition holds;  when
it holds with some unspecified exponent on the term ${\rm{dist }}(x,Y)$, we say that {\it Thom's condition
$A_f$} holds. (It can be shown using the theory of integral closure, that if every limiting tangent plane to the fibers of $f$ contains $TY_y$, along $Y$, then the distance condition holds with some exponent.)

If we have a maximal rank isolated determinantal singularity $X$, then the invariants of this section take the form 
$e(M,N_D)+ mult _Y\Gamma (N_D)$, where $Y$ is the base of the smoothing defined by the matrix defining our singularity. In this first part, $M$ will be the Jacobian module of $X$ or $mJM(F)$. We denote the sum $e(M,N_D)+ mult _Y\Gamma (N_D)$ by $e_{\Gamma}(M,N_D)$; by the results of the previous section, this is depends only on the presentation matrix of $X$, hence is independent of any family deforming $X$ induced from a deformation of the presentation matrix. 

Here is our result for condition $W$. The result is a modification of theorem 5.7 of \cite{Gaff1} to fit the context of this paper. 
Since in our situation we assume that $Y$ is in the cosupport of $JM(F)$,
and  $m_YJM(F)=JM(F)$ off $Y$, it follows that
$e(m_y JM(F_y),N_X,(z,y))=e(JM(F_y, N_X,(z,y)))$, $z\ne0$.

\begin{theorem}   Suppose $( X^{d+k},0) \subset  (\mathbb C^{n+k},0)$, 

$X = F^{-1}(0)$,  $F:{\mathbb C}^{n+k}\to{\mathbb C}^p$, $Y$ a smooth subset of $X$,
coordinates  chosen so
that ${\mathbb C}^k \times {0} = Y$, $F$ induced from a deformation of the presentation matrix of  $X_0$, $X$ equidimensional with equidimensional fibers, of expected dimension.

A) Suppose $X_y$ are isolated, maximal rank determinantal 
singularities, suppose the singular set of $X$ is $Y$.
Suppose $e_{\Gamma}(m_yJM(F_y),N_D)(y))$ is independent of $y$. 
 Then the union of the singular points
of $X_y$
 is $Y$, and the pair of strata
$(X-Y,Y)$ satisfies condition $W$.

B) Suppose $\Sigma(X)$
 is $Y$  and  the pair $(X-Y,Y)$ satisfies condition $W$.  Then  
 $e_{\Gamma}(m_yJM(F_y),N_D(y))$ is independent of $y$.\end{theorem}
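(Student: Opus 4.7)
The plan is to reduce condition $W$ to an integral-dependence statement via Proposition 5.1, and then to read this dependence off of constancy of $e_\Gamma$ using the Multiplicity Polar Theorem (Theorem 2.5) and the covering argument (Theorem 2.8). By Proposition 5.1, $(X-Y,Y)$ satisfies $W$ if and only if $\partial F/\partial y_i \in \overline{m_Y JM(F)}$ for each tangent direction $\partial/\partial y_i$ to $Y$. Off $Y$ this inclusion is automatic, since there $X$ is smooth and $m_Y$ is the unit ideal, so the only issue is to propagate the generic inclusion to all of $Y$.

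First I would apply Theorem 2.5 to the pair $(m_Y JM(F), N_D)$ on the family $\cX/Y$. The hypothesis $\Sigma(X)\subset Y$ ensures that the two modules have the same integral closure off the finite-over-$Y$ set $Y$, so the theorem gives
$$\Delta e(m_Y JM(F), N_D) = mult_Y\Gamma_d(m_Y JM(F)) - mult_Y\Gamma_d(N_D).$$
Next I would apply Theorem 2.8 with $(Z,0)$ a smoothing component of the miniversal base for $M_X$ containing $(Y,0)$. Because $N_D$ is universal (the end of Section~3 shows that $N_D(\cX)$ specializes correctly and that its polar multiplicity on a smoothing is an invariant of the fiber), the quantities $mult_{Z_0}\Gamma_d(N_D)$ and $mult_{Z_y}\Gamma_d(N_D)$ read off the landscape polar invariants of the fibers $(X_0,0)$ and $(X_y,y)$, and so by Theorem 2.8, $mult_Y\Gamma_d(N_D)$ is precisely the jump in the $mult\,\Gamma_d(N_D)$ term appearing in $e_\Gamma$. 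Combining, one obtains the central identity $\Delta e_\Gamma = mult_Y\Gamma_d(m_Y JM(F))$.

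For part (A), if $e_\Gamma$ is constant in $y$, the central identity forces $mult_Y\Gamma_d(m_Y JM(F)) = 0$, and the principle of specialization of integral dependence in module form (a corollary of Theorem 2.5; see \cite{Gaff1}) then propagates the generic dependence $\partial F/\partial y_i \in \overline{m_Y JM(F)}$ to all of $Y$. By Proposition 5.1 this gives condition $W$. The fiberwise statement that the singular points of $X_y$ make up exactly $Y$ follows from the hypothesis $\Sigma(X)=Y$ together with equidimensionality of fibers (were some fiber $X_{y_0}$ smooth at $y_0$, both terms in $e_\Gamma(y_0)$ would vanish, contradicting $e_\Gamma(0)>0$). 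For part (B), condition $W$ together with Proposition 5.1 gives $\partial F/\partial y_i \in \overline{m_Y JM(F)}$; combined with the isolated-singularity hypothesis, this forces the codim-$d$ polar of $m_Y JM(F)$ to be empty over $Y$, and the central identity yields $\Delta e_\Gamma = 0$.

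The main obstacle is the identification of $mult_Y\Gamma_d(N_D)$ with the jump in the landscape polar via Theorem 2.8. This requires that $N_D$ specializes correctly across the family $\cX/Y$, a property that ultimately rests on the universality of $N_D$ (arising from its identification with the pullback of the Jacobian module of $\overline{\Sigma}_r$, which is stable by Proposition~3.3) and on the smoothness of the base of the determinantal miniversal deformation, which lets us enlarge $(Y,0)$ inside a smoothing component $(Z,0)$ as Theorem 2.8 demands. Once this step is in hand, both implications follow from the integral-closure dictionary of \cite{Gaff1} along the same lines as Theorem~5.7 there.
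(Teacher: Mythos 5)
Your overall route is the one the paper takes: reduce condition $W$ to the integral dependence $\partial F/\partial y\in\overline{m_YJM_z(F)}$ via Proposition 5.1, use the Multiplicity Polar Theorem together with Theorem 2.8 (applied after embedding the family in a versal unfolding with smooth base) to convert constancy of $e_\Gamma$ into emptiness of the codimension-$d$ polar of $m_YJM_z(F)$ over $Y$, and then invoke the principle of specialization of integral dependence; the converse in (B) runs the same machine backwards, using Teissier's bound on the fiber dimension of the exceptional divisor of $B_{m_Y}(C(X))$ to pass from the integral closure condition to emptiness of the polar. Your ``central identity'' $\Delta e_\Gamma=\mathrm{mult}_Y\Gamma_d(m_YJM_z(F))$ is exactly the bookkeeping the paper performs, just organized with Theorem 2.8 applied to $N_D$ rather than to the relative Jacobian module over the versal base; both versions are fine.

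There is one genuine gap: your justification of the conclusion in (A) that the union of the singular points of the $X_y$ equals $Y$. Your parenthetical only rules out a fiber $X_{y_0}$ becoming smooth along $Y$; it does not rule out the fiberwise singular locus acquiring points off $Y$. The hypothesis $\Sigma(X)=Y$ does not by itself give this, because a fiber of the projection $X\to Y$ can be singular at a point where the total space $X$ is smooth (the projection merely fails to be submersive there); in other words $\mathrm{cosupp}\,JM_z(F)$ always contains, and may strictly contain, $\mathrm{cosupp}\,JM(F)=\Sigma(X)$. This splitting phenomenon is exactly what the example following Corollary 5.6 exhibits and what Proposition 5.8 is designed to handle. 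The paper closes the gap by observing that the PSID conclusion $JM_Y(F)\subset\overline{m_YJM_z(F)}$ yields $\overline{JM(F)}\subset\overline{JM_z(F)}$ (since $JM(F)$ is generated by $JM_z(F)$ together with $JM_Y(F)$), hence $\mathrm{cosupp}\,\overline{JM_z(F)}=\mathrm{cosupp}\,\overline{JM(F)}=Y$. You should derive the non-splitting statement from this integral closure inclusion rather than from the hypothesis on $\Sigma(X)$ alone.
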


\begin{proof} Again note that $JM(F)\in H_0(JM_z(F))$. 

Now we prove A). We can embed the family in a restricted versal unfolding with smooth base $\tilde Y^l$. Consider the polar variety of $JM_z(F)$ of dimension $l$, and the degree of its projection to $\tilde Y^l$ along points of $Y$. The hypothesis on $e_{\Gamma}$ implies by the multiplicity polar theorem that this degree is constant over $Y$. In turn this implies that the polar variety over $Y$ does not split, hence the polar of the original deformation is empty. This then implies that the PSID holds.

Since $W$ holds generically, by the PSID, it follows that 
$${JM_Y(F)}\subset\?{m_YJM_z(F)}.$$ This implies that $\?{JM(F)}\subset\?{JM_z(F)}$.
 Hence the union of the singular points
of $F_y$ which is the cosupport of $\?{JM_z(F)}$ is equal to the cosupport of $\?{JM(F)}$
which is $Y$.  Then the inclusion ${JM_Y(F)}\subset\?{m_YJM_z(F)}$ implies $W$ for $(X-Y,Y)$. (Cf \cite{G-2})

Now we prove B).  $W$ implies ${JM_Y(F)}\subset\?{m_YJM_z(F)}$ which implies that $\?{m_YJM(F)}=\?{m_YJM_z(F)}$.
We know by \cite{T-2} that condition $W$ implies that the fiber dimension of the exceptional divisor of
$B_{m_Y}(C(X))$ over each point of $Y$ is as small as possible.
The integral closure condition $\?{m_YJM(F)}=\?{m_YJM_z(F)}$ implies that the same is true for
$B_{m_Y}(\Projan{{{\mathcal R}}} (JM_z(F)))$. This implies that the polar of $m_YJM_z(F)$ is empty, hence by the multiplicity polar formula
the invariant $e_{\Gamma}(m_yJM(F_y),N_D)(y))$ is independent of $y$.\end{proof}

There is a nice geometric interpretation of the number $e(mJM(F_y))$ 
which we now describe. We denote the multiplicity of the polar 
variety of 
$X_y,0$ of dimension $i$ by $m^i(X_y)$.
\begin{theorem} Suppose  $N$ any submodule  in $\cO^p_{X_y,0}$ such 
that $\dim_{\mathbb C}N/JM(F_y)<\infty$, then
$$e(mJM(F_y),N)=e(JM(F_y),N)+\sum\limits_{i=1}^{d}{{n-1}\choose i } 
m^i(X_y).$$
\end{theorem}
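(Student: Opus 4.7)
The first move is to invoke the principle of additivity of Kleiman and Thorup \cite{KT}. Because $mJM(F_y)\subset JM(F_y)\subset N$, and each of the pairs $(mJM(F_y),JM(F_y))$ and $(JM(F_y),N)$ is a reduction on a deleted neighborhood of the origin (so both multiplicities are well-defined), additivity gives
\[
e(mJM(F_y),N)=e(mJM(F_y),JM(F_y))+e(JM(F_y),N).
\]
The theorem thus reduces to the $N$-free identity
\[
e(mJM(F_y),JM(F_y))=\sum_{i=1}^{d}\binom{n-1}{i} m^i(X_y). \qquad(\star)
\]

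For $(\star)$, the idea is to interpret $mJM(F_y)$ through a Segre construction. If $z_1,\ldots,z_n$ generate the maximal ideal $m=m_{X_y,0}$ and $n_1,\ldots,n_p$ generate $JM(F_y)$, then $mJM(F_y)$ is generated by the products $z_j n_\ell$. This yields a surjection $\mathcal{R}(m)\otimes_{\cO_{X_y}}\mathcal{R}(JM(F_y))\twoheadrightarrow \mathcal{R}(mJM(F_y))$, and hence a closed immersion
\[
\Projan\mathcal{R}(mJM(F_y))\hookrightarrow B_m(X_y)\times_{X_y}\Projan\mathcal{R}(JM(F_y)),
\]
with $B_m(X_y)=\Projan\mathcal{R}(m)$ the blow-up of $X_y$ at the origin. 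Under this embedding, the tautological class of $mJM(F_y)$ pulls back to $c_m+c_{JM(F_y)}$. Moreover, $\rho(mJM(F_y))$ on $\Projan\mathcal{R}(JM(F_y))$ coincides with the pullback of $m$, so the exceptional divisor $D_{mJM,JM}$ in the Kleiman--Thorup construction is the preimage of the origin in the product.

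I would then substitute $c_{mJM(F_y)}=c_m+c_{JM(F_y)}$ into the Kleiman--Thorup formula
\[
e(mJM(F_y),JM(F_y))=\sum_{j=0}^{d+g-2}\int D_{mJM,JM}\cdot (c_m+c_{JM(F_y)})^{d+g-2-j}\cdot c_{JM(F_y)}^{j},
\]
expand by the binomial theorem, and collect terms of fixed $c_m$-degree. The mixed intersection numbers $\int D_{mJM,JM}\cdot c_m^{i}\cdot c_{JM(F_y)}^{\,d+g-2-i}$ are, by the Le--Teissier interpretation \cite{Le-T} of the absolute polar varieties of $X_y$ via its conormal, the polar multiplicities $m^i(X_y)$. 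A combinatorial collapse of the resulting double sum over $(i,j)$ should yield the coefficient $\binom{n-1}{i}$ in front of $m^i(X_y)$, with $i$ restricted to $1\le i\le d$ by dimension reasons (the truncation $c_m^{n}=0$ on the $\mathbb{P}^{n-1}$-factor coming from $B_m(X_y)$ bounds $i$ above, and the $i=0$ contribution is exactly the summand already extracted by additivity).

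\textbf{Main obstacle.} The delicate step is the combinatorial bookkeeping: showing that after expanding $(c_m+c_{JM(F_y)})^{d+g-2-j}$ and summing over $j$, the coefficient of each $m^i(X_y)$ collapses to the clean binomial $\binom{n-1}{i}$. This hinges on the vanishing $c_m^{n}=0$ together with a hockey-stick type identity. A secondary subtlety is confirming that the intersection $\int D_{mJM,JM}\cdot c_m^{i}\cdot c_{JM(F_y)}^{\,d+g-2-i}$ really does compute the absolute polar multiplicity $m^i(X_y)$ as classically defined on the conormal of $X_y$; this requires checking that the $c_m$-hyperplanes coming from the blow-up of the origin cut out precisely the slicings used in the Le--Teissier definition, independently of the modification $\Projan\mathcal{R}(JM(F_y))$ chosen to house the computation.
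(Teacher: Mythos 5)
The paper's own proof of this statement is a one-line citation to Theorem 9.8(i) of Kleiman--Thorup \cite{KT}, so your proposal is not so much an alternative route as an outline of how that cited result is actually proved. The skeleton is correct: additivity applies because $mJM(F_y)=JM(F_y)$ off the origin and $JM(F_y)$ has finite colength in $N$, so both pairs have well-defined multiplicities and the problem reduces to your $(\star)$; the Segre embedding does give $c_{mJM}=c_m+c_{JM}$; and since the generic rank of $JM(F_y)$ is $g=n-d$ (the codimension of $X_y$ in $\mathbb C^n$), the Kleiman--Thorup sum runs to $d+g-2=n-2$ and the hockey-stick identity $\sum_{l=a}^{n-2}\binom{l}{a}=\binom{n-1}{a+1}$ supplies the binomial coefficients. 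Three pieces of bookkeeping in your sketch need correcting, though none is fatal. First, the correct identification is
$$\int D_{mJM,JM}\cdot c_m^{\,a}\cdot c_{JM}^{\,n-2-a}=m^{a+1}(X_y),$$
since cutting with $n-2-a$ generic sections in the $c_{JM}$ direction produces the polar variety of $X_y$ of dimension $a+1$ (using that $\Projan\cR(JM(F_y))$ lies over the conormal of $X_y$), and the remaining $c_m^{\,a}$ computes its multiplicity at $0$; thus the exponent of $c_m$ attached to $m^i(X_y)$ is $i-1$, not $i$, and only then does the hockey-stick coefficient come out as $\binom{n-1}{(i-1)+1}=\binom{n-1}{i}$ --- your identification as written would produce $\binom{n-1}{i+1}$ in front of $m^i$. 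Second, the operative truncation is not $c_m^{\,n}=0$ but $c_m^{\,d}\cdot D_{mJM,JM}=0$: the divisor lies over the origin and the exceptional fibre of $B_m(X_y)$ has dimension $d-1$, which is what kills the terms with $i>d$. Third, after additivity the entire expansion concerns only the pair $(mJM(F_y),JM(F_y))$, so there is no leftover ``$i=0$ contribution'' equal to $e(JM(F_y),N)$; the pure $c_m^{\,0}$ term is absorbed into $\binom{n-1}{1}m^1(X_y)$. With these adjustments your argument is precisely the Kleiman--Thorup/L\^e--Teissier computation that the paper invokes.
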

\begin{proof} This is exactly the content of the formula in Theorem 9.8 (i) p.221 
\cite{KT}.
\end{proof}
\begin{cor} We have:
$$e_{\Gamma}(mJM(F_y),N_D(y))=e_{\Gamma}(JM(F_y),N_D(y))+\sum\limits_{i=1}^{d}{{n-1}\choose i } 
m^i(X_y).$$\end{cor}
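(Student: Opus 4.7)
The plan is to deduce this corollary directly from Theorem 5.4 by specializing the module $N$ to $N_D(y)$. The only thing to verify is the hypothesis of Theorem 5.4, namely that $\dim_{\mathbb C} N_D(y)/JM(F_y) < \infty$.

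First, I would recall from the discussion of the determinantal normal module in Section 3 that, because $X_y$ is an isolated maximal-rank determinantal singularity, we have the chain $JM(F_y) \subset N_D(y) \subset H_{d-1}(JM(F_y))$, and the first inclusion has finite colength. This is exactly the hypothesis needed for Theorem 5.4, so I may apply it with $N := N_D(y)$ to obtain
\begin{equation*}
e(mJM(F_y),N_D(y)) \;=\; e(JM(F_y),N_D(y)) + \sum_{i=1}^{d}\binom{n-1}{i}\,m^i(X_y).
\end{equation*}

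Next, I would unwind the definition of $e_{\Gamma}$ introduced just before Theorem 5.2: for any submodule $M$ of $N_D$ for which the pair is defined,
\begin{equation*}
e_{\Gamma}(M,N_D(y)) \;=\; e(M,N_D(y)) + \mathrm{mult}_Y\,\Gamma(N_D(y)).
\end{equation*}
Adding $\mathrm{mult}_Y\,\Gamma(N_D(y))$ to both sides of the identity obtained from Theorem 5.4 and regrouping then yields the claim. Note that the term $\mathrm{mult}_Y\,\Gamma(N_D(y))$ is a function only of $N_D(y)$, so it is common to both $e_{\Gamma}(mJM(F_y),N_D(y))$ and $e_{\Gamma}(JM(F_y),N_D(y))$ and contributes nothing to the difference.

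There is no real obstacle here: the work has all been done in Theorem 5.4, which is the Kleiman--Thorup formula relating $e(mJM,N)$ to $e(JM,N)$ by an alternating sum of polar multiplicities, and the corollary is purely a matter of bookkeeping the additional polar term built into the definition of $e_{\Gamma}$. The only mildly delicate point is confirming that the finite-colength hypothesis of Theorem 5.4 is automatic in our landscape; this is where the isolated-singularity assumption on $X_y$, together with the sandwiching of $N_D$ inside $H_{d-1}(JM)$, is invoked.
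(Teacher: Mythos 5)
Your proposal is correct and matches the paper's own argument, which is literally to apply Theorem 5.4 with $N=N_D(y)$ and add $\mathrm{mult}_Y\,\Gamma_d(N_D)$ to both sides; your extra check that $JM(F_y)$ has finite colength in $N_D(y)$ (via the sandwiching of $N_D$ between $JM$ and $H_d(JM)$ for an isolated singularity) is a point the paper leaves implicit but is exactly the right justification. The only quibble is cosmetic: the Kleiman--Thorup formula in Theorem 5.4 is a binomial-weighted sum of polar multiplicities, not an alternating sum.
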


\begin{proof} Simply add $mult _Y\Gamma_d (N_D)$ to both sides of the formula of the last theorem.\end{proof}
 
On earlier work on ICIS it was possible to drop the hypothesis $S(X)=Y$ in part A) of theorem 5.2. There since $m_d(X^d(y))$, was non-zero at any singular point, the independence of $m_d(X^d(y))$ from $y$ prevented the singular locus from splitting. In order to strengthen 5.2A, we would like to know when $e_{\Gamma}(JM(F_y),N_D)$ is  non-zero. As a first step we prove a result about when $mult _Y\Gamma_d (N_D)$ is  non-zero.

\begin{Prop} Assume the presentation matrix of $X^d\subset {\mathbb C}^q$ has size $(n+k,n)$, all of whose entries are zero at zero. Then the polar curve of $N_D$ is  non-empty if and only if 
$d+k+1=q\le 2n+k-1$.
\end{Prop}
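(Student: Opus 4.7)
The strategy is to reduce to an intersection computation on $\overline{\Sigma}_1$ via Corollary 3.6, which (using that $\overline{\Sigma}_1$ is stable by Proposition 3.3) gives
\[ mult_{\mathbb{C}}\Gamma_d(N_D(\tilde X_{\tilde M})) = M(\mathbb{C}^q)\cdot \Gamma_d(\overline{\Sigma}_1) \]
for any smoothing $\tilde M$ of $M$. The hypothesis that $X^d$ is maximal rank determinantal with presentation matrix of size $(n+k)\times n$ forces the expected codimension of $X$ to be $k+1$, so $q = d+k+1$ automatically; the content of the proposition is therefore the inequality $q \le 2n+k-1$, and the polar curve is non-empty precisely when the intersection number on the right is positive.

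For the necessity, suppose $q > 2n+k-1$, i.e. $d > 2n-2$. Since $\dim\overline{\Sigma}_1 = n(n+k)-(k+1)$, a direct computation shows
\[ \dim \Gamma_d(\overline{\Sigma}_1) = n(n+k) - (k+1) - d < (n-1)(n+k-1). \]
Corollary 2.8, applied with $r=1$, then gives $\Gamma_d(\overline{\Sigma}_1) = \emptyset$, so the intersection number vanishes and the polar curve is empty.

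For the sufficiency, suppose $d \le 2n-2$. The same dimension count gives $\dim \Gamma_d(\overline{\Sigma}_1) \ge (n-1)(n+k-1)$, and a generic hyperplane section argument shows $\Gamma_d(\overline{\Sigma}_1)$ is actually non-empty. Because $\overline{\Sigma}_1$ is cut out by homogeneous minors of the tautological matrix, it is invariant under the $\mathbb{C}^{\ast}$-action by scalar multiplication, and so is $\Gamma_d(\overline{\Sigma}_1)$; hence the latter is a cone through the origin of codimension $q$ in $Hom(\mathbb{C}^n,\mathbb{C}^{n+k})$. The hypothesis that every entry of $M$ vanishes at $0$ gives $M(0)=0$, so $M(\mathbb{C}^q)$ passes through the vertex of this cone. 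Since $\dim M(\mathbb{C}^q)$ and $\dim \Gamma_d(\overline{\Sigma}_1)$ are exactly complementary in $Hom(\mathbb{C}^n,\mathbb{C}^{n+k})$, for a generic smoothing $\tilde M$ the preimage $\tilde M^{-1}(\Gamma_d(\overline{\Sigma}_1))$ contains $0$, meets $\overline{\Sigma}_1$ transversely away from the origin, and has the expected dimension $1$. Its projection to $Y = \mathbb{C}$ is therefore a cover of positive degree, whence $M(\mathbb{C}^q)\cdot\Gamma_d(\overline{\Sigma}_1) > 0$.

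The main obstacle is the sufficiency direction: non-emptiness of $\Gamma_d(\overline{\Sigma}_1)$ alone is not enough, since intersection numbers can vanish when the image misses a variety. The essential inputs are the cone structure of $\Gamma_d(\overline{\Sigma}_1)$ together with $M(0)=0$, which forces the image through the vertex, and the complementary dimensions, which guarantee that the intersection at the origin contributes positively. Once these are combined, conservation of number under the smoothing $\tilde M$ converts the local contribution at the origin into a positive multiplicity for the cover $\tilde M^{-1}(\Gamma_d(\overline{\Sigma}_1)) \to Y$.
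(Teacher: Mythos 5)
Your reduction to the intersection number $M(\mathbb{C}^q)\cdot\Gamma_d(\overline{\Sigma}_1)$ and your necessity argument are sound: when $q\ge 2n+k$ the relevant polar of $\overline{\Sigma}_1$ has dimension at most $(n-1)(n+k-1)-1$, so Corollary 2.8 kills it and the polar curve upstairs is empty. This is a genuinely different route from the paper, which works on the modification $\mathcal{X}_M\cong\Projan\cR(N_D(\mathcal{X}))$: there the bound $q-1\le 2n+k-2$ comes from the exceptional fibre sitting in $\mathbb{P}^{n+k-1}\times\mathbb{P}^{n-1}$, and non-emptiness is obtained by a local count of the bilinear defining equations of $\mathcal{X}_M$ together with Grothendieck's connectedness theorem, which forces the fibre over the origin to attain the maximal dimension $q-1$.

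The gap is in your sufficiency direction, at two points. First, you infer that $\Gamma_d(\overline{\Sigma}_1)\ne\emptyset$ from the computation $\dim\Gamma_d(\overline{\Sigma}_1)=n(n+k)-q\ge(n-1)(n+k-1)$; but that is only the \emph{expected} dimension, and an expected-dimension count can never show a variety is non-empty (Corollary 2.8 gives emptiness below the threshold, not non-emptiness above it). What is actually needed is the content of Proposition 2.7: the fibre of $C(\overline{\Sigma}_1)$ over the origin is $X_{n-1}$, of dimension $2n+k-2$, so the $q-1$ generic hyperplane sections defining $\Gamma_d(\overline{\Sigma}_1)$ meet that fibre exactly when $q-1\le 2n+k-2$; this is what proves simultaneously that the polar is non-empty and that it passes through the origin. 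Second, your conservation-of-number step requires the central-fibre intersection $M^{-1}(\Gamma_d(\overline{\Sigma}_1))$ to be $\{0\}$, i.e.\ proper: if it were positive-dimensional, a one-parameter perturbation can have intersection count zero (the points escape along the excess locus, as for a line perturbed off a hyperplane containing it), so the cover over $Y$ could have degree $0$. Genericity of the smoothing $\tilde M$ cannot supply this, because the smoothing does not move the central fibre $M$; the properness has to come from moving the polar itself by the $GL_{n+k}\times GL_n$-action and Kleiman transversality, as in Propositions 2.4 and 4.4. With these two inputs your argument closes; as written, the positivity of the intersection number is not established.
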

\begin{proof} We assume $\mathcal X$ is a smoothing of $X$. Then $\mathcal X$ is itself an isolated determinantal singularity. The non-emptiness of the polar curve is equivalent to the dimension of the fiber of ${\mathcal X}_M$ over the origin having maximal dimension $q-1$. This is maximal because the dimension of ${\mathcal X}_M$ is $q$. The fiber of ${\mathcal X}_M$ over the origin is inside ${\mathbb P}^{n+k-1}\times {\mathbb P}^{n-1}$. Thus a necessary condition for the fiber over the origin to be of dimension $q-1$ is 
for $q-1\le  (n+k-1)+(n-1)= 2n+k-2$ or $q\le  2n+k-1$.

Assume $q\le  2n+k-1$. We must show the fiber over the origin has dimension $q-1$. 

Consider the set of equations of form $(T_1,\dots,T_n)[M]^t=0$ and of form $(S_1,\dots,S_{n+k})[M]=0$, where $(T_1,\dots,T_n)$ (resp.  $(S_1,\dots,S_{n+k})$) are coordinates on ${\mathbb P}^{n-1}$ (resp. ${\mathbb P}^{n+k-1}$). Off $0\times {\mathbb P}^{n+k-1}\times {\mathbb P}^{n-1}$, these are a set of defining equations for ${\mathcal X}_M$. Denote by $V$ the zero set of these equations. Locally at points of $0\times {\mathbb P}^{n+k-1}\times {\mathbb P}^{n-1}$, we can cut down on the number of equations needed. To illustrate the idea, suppose we are working at $(1,0,\dots,0)\times (1,0,\dots,0)$. The equations of the first type become $(1,\dots,T_n)[M]^t=0$. If the equations hold at a point then the first column of $[M]$ must be in the span of the other columns. So it suffices to use only $n-1$ equations of the form  $(S_1,\dots, S_{n+k})[M]=0$ to cut out the vanishing of the original set of equations. Locally then we use $(n+k)+(n-1)$ equations to cut out $V$. This implies that each component of $V$ must have codimension at most $2n+k-1$ or dimension at least $[(q+1)+(n-1)+(n+k-1)]-(2n+k-1)=q$. If  $q-1=  2n+k-2$, then although $0\times {\mathbb P}^{n+k-1}\times {\mathbb P}^{n-1}$ lies in $V$, it cannot be a component of $V$, hence is in  ${\mathcal X}_M$, and the dimension of the fiber over $0$ of  ${\mathcal X}_M$ is maximal.

So we may suppose $q-1<  2n+k-2$. In this case, since ${\mathcal X}$ has an isolated singularity, $V$ has two components  $0\times {\mathbb P}^{n+k-1}\times {\mathbb P}^{n-1}$ and ${\mathcal X}_M$. By Grothendieck's connectedness theorem (\cite {GR}, XIII2.1) it follows that the two components intersect in a set of dimension $q-1$, which finishes the proof.

\end{proof} 

\begin{cor} Assume the presentation matrix of $X^d\subset {\mathbb C}^q$ has size $(n+k, n)$, all of whose entries are zero at zero. Then $m_d(X^d)$  is  non-zero if 
$q\le 2n+k-1$.
\end{cor}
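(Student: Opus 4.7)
My plan is to combine Proposition~5.5 with the Multiplicity Polar Theorem and the identification of the $Y$-degree of the relative polar curve with the classical polar multiplicity $m_d(X^d)$. Fix a one-parameter smoothing $\cX\to Y=\mathbb{C}$ of $X$ induced by a deformation of the presentation matrix. By Proposition~5.5, the hypothesis $q\le 2n+k-1$ guarantees that the polar curve $\Gamma_d(N_D(\cX))$ is non-empty, hence $\mathrm{mult}_Y\Gamma_d(N_D(\cX))\ge 1$.

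Next, I would apply the Multiplicity Polar Theorem (Theorem~2.5) to the pair $(JM_z(\cX), N_D(\cX))$. Since the generic fiber $\cX_y$ ($y\ne 0$) is smooth, both modules coincide with the normal bundle there, so $e(JM_z(\cX_y), N_D(\cX_y))=0$. The change $\Delta e$ across the smoothing equals $e(JM(X), N_D(X))\ge 0$, and Theorem~2.5 yields
\[
\mathrm{mult}_Y\Gamma_d(JM_z(\cX)) = e(JM(X),N_D(X)) + \mathrm{mult}_Y\Gamma_d(N_D(\cX)) \ge 1.
\]

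The final step is to identify $\mathrm{mult}_Y\Gamma_d(JM_z(\cX))$ with $m_d(X^d)$. By conservation of multiplicity, the generic fiber of the relative polar curve over $Y$ consists of critical points of a generic linear projection on smooth $\cX_y$; since $X$ has isolated singularity these accumulate only at the origin as $y\to 0$, and their count is $m_d(X^d)$ by definition. Hence $m_d(X^d)\ge 1>0$.

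I expect this last identification to be the main obstacle. In the paper's framework it can be obtained by combining the Section~4 formula for $\mathrm{mult}_Y\Gamma_d(JM_z(\cX))$ with the L\^{e}-Teissier expression for $m_d(X^d)$ in terms of Euler characteristics of the smoothing, or directly via the specialization of the relative polar curve.
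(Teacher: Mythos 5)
Your proposal is correct and follows essentially the same route as the paper: the paper's proof of this corollary simply observes that $m_d(X^d)$ is the sum $e(JM(X),N_D(X))+\mathrm{mult}_Y\Gamma_d(N_D)$ (the content of the Multiplicity Polar Theorem as packaged in Corollary 4.7), that the first term is nonnegative, and that the second is positive by Proposition 5.5 under the hypothesis $q\le 2n+k-1$. The identification of $\mathrm{mult}_Y\Gamma_d(JM_z(\cX))$ with $m_d(X^d)$, which you flag as the main obstacle, is taken as given in the paper (it is the standard Lê--Teissier interpretation of the $d$-th polar multiplicity of an isolated singularity via the relative polar curve of a smoothing), so no further work is needed there.
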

\begin{proof}
If the polar of $N_D$ is non-empty, since $e(JM(X),N_D(0))$ is non negative, and $m_d(X^d)$ is the sum of these two terms, then  $m_d(X^d)$ is non-zero.
\end{proof}

The next example shows the necessity for the hypothesis about the singular set of our family. 

Damon-Pike \cite{D-P} looked at invariants connected with isolated singularities in the case of $2\times 3$ matrices. They viewed the matrices as maps $F$ from ${\mathbb C}^p\to Hom({\mathbb C}^3,{\mathbb C}^2)$. Their techniques allowed them to calculate the reduced Euler characteristic $b_3-b_2$ of the smoothings for many determinantal singularities of this type with $p=5$. In particular, they showed that the reduced Euler characteristic was $-1$ for  the smoothings of the singularities associated with the following maps:

$$F_k=\begin {bmatrix} w&y&x\\
z&w&y+v^k
\end{bmatrix} \hskip 2em 1\le k\le 5.$$

Denote the associated singularity by $X_k$. We  take a generic linear section of $X_k$ by restricting $F_k$ to a generic hyperplane and considering the associated singularities of the restricted map. If we do this by taking $v=0$ for example, we get a surface singularity of type associated with the map:

$$G=\begin {bmatrix} w&y&x\\
z&w&y
\end{bmatrix} \hskip 2em 1\le k\le 5.$$

The Tjurina number, $\tau$, of this is $2$ (\cite {FN}); Pereira and Ruas (\cite {P-R}) showed that for this type of singularity, the Milnor number of the smoothing, and the only non-zero Betti number appearing in the reduced Euler characteristic, satisfies $\tau=\mu+1$. So here $\mu=1$. 

Since $m_d(X^d)=(-1)^d\chi(H_*(X_s))+(-1)^{d-1}\chi(H_*((X\cap H)_s)$,
we get $m_3(X_k)=b_3-b_2+\mu=0$.

So for these singularities the relative Jacobian module of the smoothing has no polar curve.

Now consider the family of sets $X_t$ defined by the family of maps:

$F_{(2,3),t}=\begin {bmatrix} w&y&x\\
z&w&y+tv^2+v^3
\end{bmatrix}.$

For each value of $t$, $m_3(X_t)=0$. Further, all of these singularities have equivalent hyperplane slices; this immediately means that the multiplicity and the multiplicity of the codimension 1 polars of the $X_t$ are the same, as these multiplicities are invariant under slicing by generic hyperplanes. It also implies that the multiplicity of the family of polar curves of the $X_t$ is constant as well. We will recall this.  Consider the union of the polar curves; when we intersect this set with a generic hyperplane which contains the parameter axis, we get the parameter axis and a curve. This curve is the relative polar curve of the total space of $X\cap H$. It is non-empty if and only the multiplicity of the union of curves changes with the parameter. In our situation, the relative polar curve of $X\cap H$ is empty because $X\cap H$ is a family of analytically equivalent surfaces. (More than we need.) Hence the multiplicity of the polar curves of $X_t$ are constant.

However, the singular locus of the total space is given by the zeros of the entries of $F_{(2,3),t}$. These are
$x=y=w=z=0$, $v^2(t+v)=0$, so the locus splits at $t=0$. Thus $X-Y$ is not a stratum in this case, although $(X-S(X),Y)$ is a pair of strata which satisfy the Whitney conditions at the origin, as there is no polar curve, and the other polar multiplicities are constant. For this type of example, a necessary and sufficient condition to prevent the splitting is to ask that the multiplicity of the ideal of entries of the $F_t$ be independent of $t$.

In the examples, on the list of Damon and Pike, the ideal of entries is generated by 5 elements; hence the above independence of parameter is necessary and sufficient for no splitting of the singular set of a family.

We can use Corollary 5.6 to strengthen 5.2A when the hypotheses of 5.6 apply.

\begin{Prop} Suppose in Theorem 5.2A, $d=1$ or $d=2$. If the invariant $e_{\Gamma}(m_yJM(F_y),N_D)(y)$ is independent of $y$, then
$X-Y$ is smooth, and the pair of strata
$(X-Y,Y)$ satisfies condition $W$.\end{Prop}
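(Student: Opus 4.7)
The plan is to reduce to Theorem 5.2A by showing that, under the hypotheses of the proposition, the singular locus $\Sigma(\mathcal{X})$ of the total space must coincide with $Y$; once that is established, Theorem 5.2A applies directly to yield Whitney's condition. Assume for contradiction that $\mathcal{X} - Y$ contains a singular point. Since each fiber $X_y$ has only isolated singularities, $\Sigma(\mathcal{X})$ has an irreducible component $Y' \ne Y$ that is finite over $Y$, so for a generic $y$ close to $0$ the fiber $X_y$ has extra isolated singular points $p_1^{(y)}, \ldots, p_r^{(y)}$ that tend to the origin as $y \to 0$.

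At each extra singular point $p = p_j^{(y)}$, the germ $(X_y, p)$ is an isolated maximal rank determinantal singularity. Row and column operations on its presentation matrix produce a block form $\mathrm{diag}(I, N)$, where $N$ is a smaller presentation matrix with $s$ columns whose entries all vanish at $p$, so that $(X_y,p)$ is locally determinantal with presentation matrix $N$. When $s \ge 2$, Corollary 5.6 applies to $N$: the dimensional hypothesis $q \le 2s + k - 1$ is equivalent to $s \ge (d+2)/2$, which for $d \in \{1,2\}$ reduces to $s \ge 2$ and is therefore met. When $s = 1$, $N$ is a column vector and $(X_y,p)$ is an isolated complete intersection singularity, for which positivity of the top polar multiplicity at an isolated singular point is classical (e.g., via the L\^e--Greuel formula). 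Thus $m^d(X_y)|_p > 0$ in either case.

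Finally, invoke conservation of number for the constituents of $e_{\Gamma}$. By upper semi-continuity of the Buchsbaum--Rim pair multiplicity $e(JM, N_D)$ and of the polar multiplicities $m^i$ in families of isolated singularities, the value at the origin of $X_0$ of each of these invariants dominates the sum of its local values over the singular points of $X_y$ limiting to the origin; the remaining summand $\mathrm{mult}_Y\Gamma(N_D)$ of $e_{\Gamma}$ is a function of the presentation matrix alone by Corollary 3.6 and hence is constant in $y$. Expanding $e_{\Gamma}$ via Corollary 5.3 carries the strictly positive weight $\binom{n-1}{d}$ on each $m^d$ contribution, and the strict positivity of $m^d$ at every $p_j^{(y)}$ then forces $e_{\Gamma}(X_0)|_0 > e_{\Gamma}(X_y)|_{(0,y)}$, contradicting the constancy hypothesis. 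Hence $\Sigma(\mathcal{X}) = Y$, and Theorem 5.2A yields the conclusion. The main obstacle is this conservation-of-number step for the composite invariant $e_{\Gamma}$ in the determinantal setting, which must be assembled from the semi-continuity of each individual summand --- a fact that is classical only for the Milnor number in the ICIS case.
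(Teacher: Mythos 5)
Your overall strategy coincides with the paper's: show that the local invariant is strictly positive at any would-be extra singular point, combine this with conservation of number to rule out splitting of the singular locus, and then invoke Theorem 5.2A. Your positivity step is correct and is in fact a more explicit justification than the paper's one-line assertion: reducing the presentation matrix at an extra singular point to an $(s+k)\times s$ block all of whose entries vanish there, observing that the hypothesis $q\le 2s+k-1$ of the corollary on the non-vanishing of $m_d$ reads $s\ge (d+2)/2$ (so is automatic for $s\ge 2$ when $d\le 2$), and settling $s=1$ by the ICIS/L\^e--Greuel case, is exactly what is needed to see that the top polar multiplicity is positive at every singular point when $d=1$ or $2$.

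The gap is in the conservation-of-number step, and you have correctly located it yourself. You propose to obtain conservation of $e_\Gamma$ by summing separate semicontinuity statements for $e(JM,N_D)$ and for the $m^i$, together with ``constancy'' of $\mathrm{mult}\,\Gamma(N_D)$. None of these is available, and the first runs against the point of the paper: by the Multiplicity Polar Theorem the defect of $e(JM(F_0),N_D(0))$ against $\sum_p e(JM(F_y),N_D(y))_p$ is $\mathrm{mult}_y\Gamma_d(JM_z(F))-\mathrm{mult}_y\Gamma_d(N_D(\mathcal{X}))$, which has no a priori sign precisely because $N_D$ is not free in the determinantal setting; this is why the paper works with $e_\Gamma$ rather than $e$ in the first place. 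Likewise $\mathrm{mult}\,\Gamma_d(N_D)$ is a local invariant of each germ, so it is not constant across the extra singular points $p_j^{(y)}$; the correct statement is Theorem~\ref{covering argument}, which gives $\mathrm{mult}_{Z_0}\Gamma_d(N_D)-\sum_p\mathrm{mult}_{Z_p}\Gamma_d(N_D)=\mathrm{mult}_Y\Gamma_d(N_D(\mathcal{X}))\ge 0$. The repair is to not decompose $e_\Gamma$ at all: apply the Multiplicity Polar Theorem to the pair $(m_YJM_z(F),N_D(\mathcal{X}))$ over $Y$ and add the identity of Theorem~\ref{covering argument} for $\Gamma_d(N_D)$; the two occurrences of $\mathrm{mult}_Y\Gamma_d(N_D(\mathcal{X}))$ cancel and one obtains the exact identity $e_\Gamma(0)-\sum_p e_\Gamma(p)=\mathrm{mult}_y\Gamma_d(m_YJM_z(F))\ge 0$ for the composite invariant. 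With that identity in hand, your positivity analysis does finish the proof.
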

\begin{proof} If $d=2$, then for any $n$, a maximal rank determinantal singularity $V$ with presentation matrix of size $(n,n+k)$ has the property that $e_{\Gamma}(JM(V),N_D(V))$ is  positive. Thus the independence of the invariant $e_{\Gamma}(m_yJM(F_y),N_D)(y)$ from parameter implies that splitting cannot occur. \end{proof} 

We now turn to the two relative conditions $\Af$ and $\Wf$. 

The result for $\Af$ is a re-tuning of theorem 5.6 of \cite{Gaff1} for our situation.

\begin{theorem}   Suppose $( X^{d+k},0) \subset  (\mathbb C^{n+k},0)$, is a determinantal singularity with presentation matrix $M$.

$X = G^{-1}(0)$,  $G:{\mathbb C}^{n+k}\to{\mathbb C}^p$, $Y$ a smooth subset of $X$,
coordinates  chosen so
that ${\mathbb C}^k \times {0} = Y$, $X$ equidimensional with equidimensional fibers of the expected dimension, $X$ reduced.

Suppose $F\:(X,Y)\to \mathbb C$,  $F\in m^2_Y$, $Z=F^{-1}(0)$.

A) Suppose $X_y$ and $Z_y$ are isolated
singularities, suppose the singular set of $F$ is $Y$.
Suppose $e_{\Gamma}(JM(G_y;F_y),\cO_{n+k}\oplus N_D(y))$ is independent of $y$. 
 Then the union of the singular points
of $F_y$
 is $Y$, and the pair of strata
$(X-Y,Y)$ satisfies Thom's \AF \hskip 2pt condition.

B) Suppose $\Sigma(F)$
 is $Y$ or is empty, and  the pair $(X-Y,Y)$ satisfies Thom's \AF\hskip 2pt
condition.  Then   $e_{\Gamma}(JM(G_y;F_y),\cO_{n+k}\oplus N_D(y))$ is independent of $y$.\end{theorem}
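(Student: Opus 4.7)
The plan is to adapt the argument of Theorem 5.2 to the relative setting by using the Jacobian module $JM(G;F)$ of the pair $(G,F)$ and the module $\cO_{n+k}\oplus N_D$, where the free summand $\cO_{n+k}$ accounts for the freedom to deform the function $F$ trivially (since $F$ is a single function with no determinantal constraint) and $N_D$ records the allowable determinantal deformations of $G$. Because $F\in m_Y^2$, the fibers $X_y,Z_y$ are isolated singularities, and $\Sigma(F)=Y$, one has the inclusions $JM(G_y;F_y)\subset JM_z(G;F)\subset \cO_{n+k}\oplus N_D(y)$ together with agreement of integral closures off $Y$, so the hypotheses needed to apply the Multiplicity Polar Theorem are in place.

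For Part A, I would embed the family in a restricted versal unfolding with smooth base $\tilde Y^\ell$ whose members are compatible with both the determinantal structure and the function $F$, and study the codimension-$d$ polar variety of $JM_z(G;F)$ on the total space. The constancy of $e(JM(G_y;F_y),\cO_{n+k}\oplus N_D(y))$ together with the universality of $N_D$ (so that $\mathrm{mult}_Y\Gamma_d$ of the larger module specializes as in Theorem 2.5 and Corollary 3.7) forces the degree of the polar projection to $\tilde Y^\ell$ over points of $Y$ to be constant. The non-splitting argument used in Theorem 5.2A then shows that the relative polar curve of $JM_z(G;F)$ along $Y$ is empty. By PSID we obtain $\partial(G,F)/\partial y\in\overline{JM_z(G;F)}$ along $Y$, which is the integral closure characterization of \AF\ for $(X-Y,Y)$ obtained by adapting Proposition 5.1, namely dropping the $m_Y$ factor (since \AF\ requires no distance rate) and replacing $F$ by the pair. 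The statement about the singular locus of $F$ coinciding with $Y$ follows from the inclusion $\overline{JM(G;F)}\subset\overline{JM_z(G;F)}$ combined with $\Sigma(F)=Y$.

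For Part B, I would reverse the chain of implications. The condition \AF\ gives directly $JM_Y(G;F)\subset\overline{JM_z(G;F)}$, and hence $\overline{JM(G;F)}=\overline{JM_z(G;F)}$ off $Y$. By the Teissier-style argument used in Theorem 5.2B, this integral closure equality forces the fiber dimension of the exceptional divisor of $\Projan\cR(JM_z(G;F))$ over each point of $Y$ to be as small as possible, so the codimension-$d$ polar variety of $JM_z(G;F)$ is empty. The Multiplicity Polar Theorem, applied once more to the pair $(JM(G_y;F_y),\cO_{n+k}\oplus N_D(y))$, then yields the desired independence of $e_\Gamma(JM(G_y;F_y),\cO_{n+k}\oplus N_D(y))$ from $y$.

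The main obstacle will be the technical bookkeeping in establishing the correct integral closure characterization of \AF\ in the presence of the determinantal constraint: Proposition 5.1 is stated for a single map $F$ with the $m_Y$ factor because it treats the Whitney condition, and here one must verify that replacing $F$ by the pair $(G,F)$ and dropping the $m_Y$ factor produces precisely the integral closure inclusion whose geometric content is the distance condition defining \AF. Once this translation is secured, the universality of $N_D$ proven in Section 3 guarantees that all the specialization properties required by the Multiplicity Polar Theorem are automatic, so the rest of the argument proceeds in parallel to the proof of Theorem 5.2.
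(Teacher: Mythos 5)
Your proposal is correct and follows essentially the same route as the paper: the paper's own proof is a short reduction to Theorem 5.6 of \cite{Gaff1} (whose argument is the versal-unfolding/Multiplicity-Polar/PSID chain you reconstruct, run in parallel to Theorem 5.2 here), preceded by the observation that the isolated-singularity hypothesis on $X_y$ and $Z_y$ makes $e(JM(G_y;F_y),\cO_{n+k}\oplus N_D(y))$ well defined. The one concrete point the paper supplies that you gloss over is the identification of the polar correction term: since $\Projan\cR(\cO_{n+k}\oplus N_D)$ is the join of a point with $\Projan\cR(N_D)$, the relevant polar variety of $\cO_{n+k}\oplus N_D$ coincides with that of $N_D$; this is what lets the $\Gamma$-part of $e_{\Gamma}(JM(G_y;F_y),\cO_{n+k}\oplus N_D(y))$ be computed from $N_D$ alone via Theorem 2.5 and Corollary 3.7, which you invoke for $N_D$ but apply to the larger module $\cO_{n+k}\oplus N_D$ without justification. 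Supplying that join observation closes the only real gap; the rest of your argument matches the intended proof.
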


\begin{proof} The condition that  $X_y$ and $Z_y$ are isolated singularities implies that the integral closure of $JM(G_y;F_y)$ contains $\cO_{n+k}\oplus N_D(y)$ for all $y$, so the multiplicity of the pair $(JM(G_y;F_y),\cO_{n+k}\oplus N_D(y))$
 is well defined.  For the module  $\cO_{n+k}\oplus N_D$,  $\Projan\cR(\cO_{n+k}\oplus N_D)$ is the join of a point with $\Projan \cR(N_D)$ in $\mathbb P^n$. This implies that the polar variety of $\cO_{n+k}\oplus N_D$ of dimension $K$ is the same as the polar variety of dimension $k$ of $N_D$. With this observation the proof of Theorem 5.6 in \cite {Gaff1} goes through.\end{proof}

The hypothesis $F\in m^2_Y$ is technical, and is used to ensure the components of the relative conormal over $Y$ have maximal dimension. Without this hypothesis it seems necessary to assume that $W_A$ holds for the pair $(X-Y,Y)$.

The result  and proof for $\WF$ is similar.

\begin{theorem}   Suppose $( X^{d+k},0) \subset  (\mathbb C^{n+k},0)$, is a determinantal singularity with presentation matrix $M$.

$X = G^{-1}(0)$,  $G:{\mathbb C}^{n+k}\to{\mathbb C}^p$, $Y$ a smooth subset of $X$,
coordinates  chosen so
that ${\mathbb C}^k \times {0} = Y$, $X$ equidimensional with equidimensional fibers of the expected dimension, $X$ reduced.

Suppose $F\:(X,Y)\to (\mathbb C,0)$, $Z=F^{-1}(0)$.

A) Suppose $X_y$ and $Z_y$ are isolated
singularities, suppose the singular set of $F$ is $Y$.
Suppose $e_{\Gamma}(m_YJM(G_y;F_y),\cO_{n+k}\oplus N_D(y))$ is independent of $y$. 
 Then the union of the singular points
of $F_y$
 is $Y$, and the pair of strata
$(X-Y,Y)$ satisfies the \WF \hskip 2pt condition.

B) Suppose $\Sigma(F)$
 is $Y$ or is empty, and  the pair $(X-Y,Y)$ satisfies the \WF\hskip 2pt
condition.  Then   $e_{\Gamma}(m_YJM(G_y;F_y),\cO_{n+k}\oplus N_D(y))$ is independent of $y$.\end{theorem}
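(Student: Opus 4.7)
The plan is to follow the template of Theorem 5.8 almost verbatim, with the single substantive change being the replacement of $JM(G_y;F_y)$ by $m_Y JM(G_y;F_y)$ in the smaller module of the pair. The preliminary point is to justify that the multiplicity in question is defined: the hypothesis that $X_y$ and $Z_y$ are isolated singularities forces $\overline{m_Y JM(G_y;F_y)} = \overline{\mathcal{O}_{n+k} \oplus N_D(y)}$ off the origin in each fiber, so the pair $(m_Y JM(G_y;F_y), \mathcal{O}_{n+k} \oplus N_D(y))$ has finite colength and $e_\Gamma$ is well defined. I would then reuse the observation from the proof of 5.8 that $\Projan \cR(\mathcal{O}_{n+k} \oplus N_D)$ is the join of a point with $\Projan \cR(N_D)$; this gives that the codimension $d$ polar of the larger module agrees with the polar of $N_D$, so the $mult_Y\Gamma$ contribution in $e_\Gamma$ is the same quantity that appears in the computations of Section 3--4.

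For part A, I would apply the Multiplicity Polar Theorem (Theorem 2.5) to the pair above along the base $Y$. Independence of $e_\Gamma$ from $y$ translates, via the join identification, into the vanishing over $Y$ of $mult_Y \Gamma_d(m_Y JM(G;F))$. Precisely as in the proof of 5.2A (and in the earlier Theorem 5.6 of \cite{Gaff1} cited by the authors), vanishing of this polar multiplicity prevents the relative polar curve from splitting off the parameter axis, so the polar is empty. Then the analogue of Proposition 5.1 for the $F$-relative setting --- characterising $W_F$ by the integral-closure inclusion $\partial G_y/\partial s,\, \partial F_y/\partial s \in \overline{m_Y \cdot JM_z(G;F)}$ along tangent directions to $Y$ --- upgrades the absence of polar to the $W_F$ condition itself, and at the same time rules out splitting of $\Sigma(F)$.

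For part B, I would reverse the chain. The $W_F$ condition together with Proposition 5.1 (applied to the map germ $(G,F)$) yields the relative inclusion $JM_Y(G;F)\subset \overline{m_Y JM_z(G;F)}$, and hence $\overline{m_Y JM(G;F)} = \overline{m_Y JM_z(G;F)}$. By Teissier's interpretation of $W$ in terms of the fiber dimension of the exceptional divisor of $B_{m_Y}(C(X))$, this integral-closure equality forces the fibers of $B_{m_Y}(\Projan \cR(JM_z(G;F)))$ over points of $Y$ to have the smallest possible dimension. Thus the codimension $d$ polar of $m_Y JM(G;F)$ is empty, and the Multiplicity Polar Theorem delivers the desired independence of $e_\Gamma(m_Y JM(G_y;F_y),\mathcal{O}_{n+k}\oplus N_D(y))$ from $y$.

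The main obstacle, as I see it, is the $F$-relative version of Proposition 5.1: the original statement handles a map whose zero set is $X$, and one needs the corresponding statement in which the Jacobian module of the pair $(G,F)$ controls limiting tangent hyperplanes to the fibers of $F$ restricted to $X$. In the $A_F$ case the hypothesis $F \in m_Y^2$ was used to ensure that the components of the relative conormal over $Y$ have maximal dimension; without it one would normally need to assume $W_A$ for $(X-Y,Y)$. For $W_F$, as the authors already hint, the stronger distance condition lets one bypass this technicality, but verifying cleanly that the join structure of $\mathcal{O}_{n+k} \oplus N_D$ interacts correctly with the $m_Y$-blowup is the delicate bookkeeping step where care is needed.
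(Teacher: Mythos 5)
Your proposal is correct and follows essentially the same route as the paper: the paper's own (very terse) proof consists exactly of the ingredients you assemble --- the join observation $\Projan\cR(\cO_{n+k}\oplus N_D)\cong\{pt\}*\Projan\cR(N_D)$ carried over from the $A_F$ theorem, the well-definedness of the pair from the isolated-singularity hypothesis, and the Multiplicity Polar Theorem / integral-closure characterization argument (which the paper defers to Corollary 4.14 of \cite{G-P-H}). One small correction: the paper attributes the ability to drop $F\in m_Y^2$ not to the stronger distance inequality of $W_F$ but purely to the structural fact that $\Projan\cR(m_YJM_z(G;F))$ is obtained by blowing up the pullback of $m_Y$ on $\Projan\cR(JM_z(G;F))$, so every component of the inverse image of $Y$ is automatically a divisor, hence of maximal dimension --- the very point you flag as "delicate bookkeeping" is thus handled for free by the presence of the factor $m_Y$ in the smaller module.
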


\begin{proof} Note that we can drop the hypothesis $F\in m^2_Y$. This is because the construction of $ \Projan \cR (m_YJM_z(G;F))$ involves blowing up by the pullback of $m_Y$ to $\Projan\cR(JM_z(G;F))$; this has the effect of ensuring that all components of the inverse image of $Y$ have maximal dimension. Then the proof is similar to that of Corollary 4.14  of \cite{G-P-H}, keeping in mind the remarks made in the proof of the last theorem. \end{proof}

\section{Two Whitney equisingular families with generic members with different smoothings}

 In this section we present an example of a singularity which is a member of two Whitney equisingular families, whose generic elements have topologically distinct smoothings. The invariant of the last section $e_{\Gamma}(JM(X),N_D)$ takes on different values for each family, though independent of parameter within a family. This example shows that it is impossible to find an invariant which depends only on an analytic space $X$ with an isolated singularity, whose value is independent of parameter for all Whitney equisingular deformations of $X$, and which is determined by the geometry of a smoothing of $X$.

 The two families lie in different landscapes of $X$. We will see that our invariant $e_{\Gamma}(JM(X),N_D)$ gives different values for each family because the generic element in each landscape has different topology. 
 
Our example is constructed from 
the following example, taken from Wahl \cite[pg. 52]{Wahl}. Consider the rational surface singularity $(X,0) \subset (\mathbb{C}^{5},0)$ given by the vanishing of the $2$ by $2$ minors of
\begin{equation}\label{central fiber}
\begin{pmatrix}
x_1 & x_2 & x_3 &  x_4 \\
x_2 & x_3 & x_4 & x_1+x_5^{2}
\end{pmatrix}
\end{equation}
The miniversal base space has components of dimensions $4,6$ and $8$ and $\dim T_{X}^{1}=10$. The versal determinantal deformation of dimension $8$ is given by
\begin{equation}
\begin{pmatrix}\label{8-dim}
x_1 & x_2 & x_3 &  x_4 \\
x_2+t_1+t_6x_5 & x_3+t_2+t_7x_5 & x_4+t_5+t_8x_5 & x_1+x_5^{2}+t_4+t_{10}x_3
\end{pmatrix}.
\end{equation}
We denote the eight dimensional base of this deformation by $V_1$.
The versal family over a component of dimension 4 is obtained from the $2$ by $2$ minors of the following matrix
\begin{equation}
\begin{pmatrix}\label{4-dim}
x_1 & x_2 & x_3 \\
x_2 & x_3+t_5+t_9x_5 & x_4\\
x_3 & x_4 & x_1+x_5^{2}+t_4+t_{10}x_3
\end{pmatrix}.
\end{equation}
We denote the four dimensional base of this deformation by $V_2$.
Our goal is to deform $(X,0)$ over $V_1$ and $V_2$ so that
the resulting one-parameter families are Whitney equisingular. Moreover, we want to show that
the two landscapes of $(X,0)$ corresponding to $V_1$ and $V_2$ are distinct. To do this it is enough to prove that the Euler characteristics of the smooth generic fibers of the one-parameter deformations of $(X,0)$ are different. We will make use the following formula (see \cite{Gaff1}) specialized to our setting
\begin{equation}\label{polar-Euler}
\mathrm{mult}_{S_i}(\Gamma_2(\mathrm{Jac}_z(\mathcal{X}_{S_i}))=\chi(X_{s_i})-\chi((X \cap H)_{s_i})
\end{equation}
where $S_i$ for $i=1,2$ are the parameter spaces of smoothings that lie in a $V_i$ respectively, $\mathcal{X}_{S_i}$ is the total space of the family over $Y$, $\mathrm{Jac}_z(\mathcal{X}_{S_i})$ is the relative Jacobian module, i.e. the module obtained by considering partials with the fiber coordinates only, $\chi(X_{s_i})$ is the Euler characteristic of the generic fiber $X_{s_i}$ and $\chi((X \cap H)_{s})$ is the Euler characteristic  of a smoothing of a hyperplane slice of $X$ over $V_i$, and $H$ is a hyperplane that is not a limiting tangent hyperplane of $(X,0)$ at $0$.

Assume $H$ is chosen so that $X \cap H$ is reduced. Then $\chi((X \cap H)_s)$ is constant regardless of which component of the miniversal base space we use, because $X \cap H$ is a reduced curve (see part (2) of Theorem 4.2.2 in \cite{BG}). So it suffices to show that the polar multiplicity in (\ref{polar-Euler}) is different across the two smoothing components.

The following observation will prove useful on several occasions throughout the remainder of this section.

\begin{Prop} The hyperplane $H_{x_5}$ defined by the vanishing of $x_5 =0$ in $\mathbb{C}^{5}$ is not a limiting tangent hyperplane at $0 \in X$.
\end{Prop}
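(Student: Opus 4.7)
The plan is to compute the tangent cone $C(X,0) \subset \mathbb{C}^{5}$ explicitly and invoke the standard criterion that a hyperplane $H$ through $0$ is not a limit of tangent hyperplanes of $X$ if $H$ contains no irreducible component of $C(X,0)$, equivalently if $\mathrm{mult}_{0}(X) = \mathrm{mult}_{0}(X \cap H)$.

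First I will compute $C(X,0)$ from the initial forms of the six $2 \times 2$ minors $f_{ij}$ of the presentation matrix (\ref{central fiber}). The three minors $f_{14}, f_{24}, f_{34}$ that involve the entry $x_{1} + x_{5}^{2}$ have initial forms obtained by formally erasing the $x_{5}^{2}$, since $x_{5}^{2}$ contributes in degree $\geq 3$ while the remaining quadratic piece dominates. Consequently the six initial forms are precisely the $2 \times 2$ minors of the constant-coefficient matrix $\left(\begin{smallmatrix} x_{1} & x_{2} & x_{3} & x_{4} \\ x_{2} & x_{3} & x_{4} & x_{1} \end{smallmatrix}\right)$. Solving its rank-$\leq 1$ condition by writing $(x_{1}, x_{2}, x_{3}, x_{4}) = s (1, \lambda, \lambda^{2}, \lambda^{3})$ forces $\lambda^{4} = 1$, and since $x_{5}$ is absent from these equations the zero set in $\mathbb{C}^{5}$ is the union $\bigcup_{\lambda^{4} = 1} L_{\lambda} \times \mathbb{C}_{x_{5}}$ of four $2$-planes $P_{\lambda}$. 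Since $X$ is pure two-dimensional, this union is exactly $C(X,0)$; its projectivisation consists of four distinct lines in $\mathbb{P}^{4}$, yielding $\mathrm{mult}_{0}(X) = 4$.

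The transversality step is then immediate: every component $P_{\lambda} = L_{\lambda} \times \mathbb{C}_{x_{5}}$ contains the vector $(0, 0, 0, 0, 1)$, so none of them lies inside $H_{x_{5}} = \{x_{5} = 0\}$. Equivalently, $X \cap H_{x_{5}}$ is the reduced union of the four distinct lines $L_{\lambda}$, with $\mathrm{mult}_{0}(X \cap H_{x_{5}}) = 4$. Invoking the multiplicity criterion concludes that $H_{x_{5}}$ is not a limiting tangent hyperplane of $X$ at $0$.

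The only delicate step is appealing to the equivalence ``transverse to $C(X,0)$ $\Leftrightarrow$ not a limiting tangent hyperplane'' in the setting of our codimension-$3$ isolated determinantal surface, rather than in the classical hypersurface case due to Teissier; in our framework the extension follows from the polar-variety arguments of Section 2. Should one prefer to bypass this citation, a direct alternative is to parametrise the smooth rank-$1$ stratum of $X$ by $\Phi(t, x_{5})$ coming from the rank-one decomposition $M(x) = u v^{T}$, observe that the tangent plane admits the basis $\{\partial_{x_{5}} \Phi, \partial_{t} \Phi\}$ in which the first vector always has $x_{5}$-component equal to $1$, and verify by a short case analysis on the rate at which $(t_{n}, x_{5, n})$ approaches the preimage of the origin (splitting according to whether $t_{n}^{4} \to 1$ and according to the ratio $(1 - t_{n}^{4})/x_{5,n}$) that, after rescaling, the Grassmannian limit of the tangent plane still contains a vector of nonzero $x_{5}$-component.
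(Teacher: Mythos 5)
Your main argument rests on the criterion ``$H$ is not a limiting tangent hyperplane if $H$ contains no component of the tangent cone $C(X,0)$, equivalently if $\mathrm{mult}_0(X)=\mathrm{mult}_0(X\cap H)$,'' and this criterion is false. The multiplicity equality does characterize the condition that $H$ contains no component of the tangent cone (Teissier), but the set of limiting tangent hyperplanes is the fiber of the conormal variety over $0$, and by L\^e--Teissier this fiber is the union of the duals of the tangent cone components \emph{together with the duals of certain exceptional cones} of smaller dimension contained in them. (The Whitney umbrella already shows the gap: $\{z=0\}$ contains no component of the tangent cone $\{x^2=0\}$ of $x^2=y^2z$, yet it is a limiting tangent plane at $0$.) Your tangent cone computation is correct --- $C(X,0)$ is the union of the four $2$-planes $P_\lambda=L_\lambda\times\mathbb{C}_{x_5}$, $\lambda^4=1$, and since these are linear their duals consist exactly of the hyperplanes containing them, so $H_{x_5}$ is indeed not in the dual of the tangent cone. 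But the dangerous case is precisely an exceptional cone: if any of the four lines $L_\lambda=X\cap H_{x_5}$ belonged to the aureole of $(X,0)$, then every hyperplane containing that line --- in particular $H_{x_5}$ --- would be a limiting tangent hyperplane. Nothing in your argument excludes this, and the citation of ``the polar-variety arguments of Section 2'' does not help: Proposition 2.6 of the paper rules out exceptional cones for $\overline{\Sigma}_r$ inside the matrix space, not for this codimension-three surface $X\subset\mathbb{C}^5$.

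Your proposed ``direct alternative'' is in substance the paper's actual proof: the authors take limits of conormal vectors (rows of the reduced Jacobian matrix $[K]$) along arbitrary curves $\phi:(\mathbb{C},0)\to(X,0)$ and show by an order-of-vanishing analysis that the fifth component can never dominate. But the ``short case analysis'' you defer is where all the content lies --- the paper needs the relations $\alpha_1+\alpha_3=2\alpha_2$, etc., among the vanishing orders, a three-way case split on the ordering of the $\alpha_i$, and in the leading case an algebraic contradiction via the quadratic $\hat a_3(\hat x_2/\hat x_1)^2+\hat a_2(\hat x_2/\hat x_1)-2\hat a_1=0$ and the identity $\hat a_1\hat a_3=-\hat a_2^2$. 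As it stands, your proposal either uses an invalid criterion or omits the entire substantive verification, so it does not constitute a proof.
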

\begin{proof} Note that by identifying each tangent hyperplane at smooth points of $(X,0)$ with its corresponding conormal vector, it is enough to show that the vector $$e_5 :=[0,0,0,0,1]$$ is not a limit of conormal vectors at smooth points of $(X,0)$. Furthermore, it is enough to check this along curves $\phi:(\mathbb{C},0) \rightarrow (X,0)$. The matrix
\begin{equation}\label{module K}
[K] := \begin{pmatrix}
-x_3 & 2x_2 & -x_1 & 0 & 0 \\
x_4  & -x_3 & -x_2 & x_1 & 0 \\
2x_1+x_5^2 & -x_4 & 0 & -x_2 & 2x_5x_1
\end{pmatrix}
\end{equation}
is obtained from the Jacobian matrix of $(X,0)$ by deleting three of its rows. Away from $\mathbb{V}(x_1x_5)$ the matrix is of maximal possible rank $3$. If $\phi$ lies in $\mathbb{V}(x_1x_5)$, then all entries in the column vector corresponding to the partials with respect to $x_5$ vanish, so all limits of conormal vectors along $\phi$ have a vanishing fifth component. Therefore, $e_5$ is not a limit of conormal vectors at points from the locus $\mathbb{V}(x_1x_5)$. Now suppose that $\phi$ does not lie in $\mathbb{V}(x_1x_5)$. Then we can assume that
$\phi$ meets $\mathbb{V}(x_1x_5)$ only at $0$
after possibly replacing $(X,0)$ with a smaller neighborhood.

Let $t$ be a generator for the maximal ideal of $\mathcal{O}_{\mathbb{C}}$. Any conormal vector along $\phi$  can be represented as a linear combination of the rows of $[K]$ with coefficients from $\mathcal{O}_{\mathbb{C}}$, i.e. we can write
\begin{equation}
v(t) = a_1(t)v_1(t)+a_2(t)v_2(t)+a_3(t)v_3(t),
\end{equation}
where $a_i(t) \in \mathcal{O}_{\mathbb{C}}$, $v_i(t) := v_i \circ \phi$ where $v_i$ are the rows of $[K]$. To prove that
$$\lim_{t \rightarrow 0}\frac{v(t)}{t^{k}} \neq e_5,$$
where $k$ is the minimum of the orders of $t$ in the components of $v(t)$, it suffices to show the order of $t$ in the fifth  component of $v(t)$ is strictly larger than one of the remaining components of $v(t)$. Suppose the contrary. Set $x_i(t)=t^{\alpha_i}\hat{x_i}$ and $a_i(t) = t^{\lambda_i}\hat{a_i}$ where $\hat{x_i}$ and $\hat{a_i}$ are units in $\mathcal{O}_{\mathbb{C}}$. The vanishing of either $x_i$ or $a_i$ for some $i$ simplifies the problem considerably, so we can assume that each $\alpha_i$ and $\lambda_i$ is a finite nonnegative integer. The  defining equations for $(X,0)$ give us
\begin{align}
\alpha_1+\alpha_3 & = 2\alpha_2 \label{ord.first} \\
\alpha_2+\alpha_4 & =2\alpha_3 \label{ord.second} \\
\alpha_1+\alpha_4 & =\alpha_2+\alpha_3 \label{ord.third}
\end{align}
and the following three possibilities for $\alpha_i$
\begin{displaymath}
\alpha_1>\alpha_2>\alpha_3>\alpha_4, \ \ \text{or} \ \ \alpha_4>\alpha_3>\alpha_2>\alpha_1  \ \ \text{and} \ \ \mathrm{ord}_{t}(2x_1+x_5^{2})=\alpha_1,
\end{displaymath}
or
$$ \alpha_1=\alpha_2=\alpha_3=\alpha_4.$$
Suppose $\alpha_1>\alpha_2>\alpha_3>\alpha_4$. The last two columns of $[K]$ give us
\begin{displaymath}
\mathrm{ord}_{t}(A_3(t)x_5(t)x_1(t))<\mathrm{ord}_{t}(x_1(t)A_2(t)-x_2(t)A_3(t)).
\end{displaymath}
 However, $\mathrm{ord}_{t}(A_3(t)x_5(t)x_1(t))>\mathrm{ord}_{t}(x_2(t)A_3(t))$ because $\alpha_1>\alpha_2$.
Therefore, $\mathrm{ord}_{t}(A_2(t)x_1(t))=\mathrm{ord}_{t}(x_2(t)A_3(t))$, i.e.
\begin{equation}\label{eq.l1}
\lambda_2+\alpha_1=\lambda_3+\alpha_2.
\end{equation}
and hence $\lambda_3>\lambda_2$. By the same token comparing the third and the fifth components of $v(t)$ we get
\begin{equation}\label{eq.l2}
\lambda_2+\alpha_2=\lambda_1+\alpha_1
\end{equation}
We claim that (\ref{eq.l1}) and (\ref{eq.l2}) imply that the order of $t$ in the summands in the second component of $v(t)$ are equal. To see this, it is enough to check that
$$\lambda_3+\alpha_4=\lambda_2+\alpha_3=\lambda_1+\alpha_2.$$
Indeed, $\lambda_3+\alpha_4=\lambda_2+\alpha_3$ because by (\ref{eq.l1}) $\lambda_3+\alpha_2=\lambda_2+\alpha_1$
and by (\ref{ord.third}) we obtain $\alpha_1-\alpha_2=\alpha_3-\alpha_4$. Also, $\lambda_2+\alpha_3=\lambda_1+\alpha_2$
because by (\ref{eq.l2}) $\lambda_2+\alpha_2=\lambda_1+\alpha_1$ and by (\ref{ord.second}) we get $2\alpha_2=\alpha_1+\alpha_3$. Thus, the orders of $t$ in each summand of the second, third and fourth components of $v(t)$ are the same. Therefore, by comparing the orders of each of these components with fifth component of $v(t)$, we get that each of the expressions
$2\hat{a_1}\hat{x_2}-\hat{a_2}\hat{x_3}-\hat{a_3}\hat{x_4},$ $\hat{x_1}\hat{a_1}+\hat{x_2}\hat{a_2},$ and $\hat{x_1}\hat{a_2}-\hat{x_2}\hat{a_3}$ is divisible by $t$. Reduce modulo $t$ and identify $\hat{x_i}$ and $\hat{a_i}$ with their images in $\mathbb{C}$. We get
\begin{gather}
\frac{\hat{x_2}}{\hat{x_1}} = -\frac{\hat{a_1}}{\hat{a_2}}=\frac{\hat{a_2}}{\hat{a_3}} \label{frac}, \\
2\hat{a_1}\hat{x_2}-\hat{a_2}\hat{x_3}-\hat{a_3}\hat{x_4}=0 \label{quadratic}.
\end{gather}
The original set of equations for $(X,0)$ gives
$$\hat{x_3}=\frac{\hat{x_2}^{2}}{\hat{x_1}} \ \ \text{and} \ \ \hat{x_4}=\frac{\hat{x_2}\hat{x_3}}{\hat{x_1}} =\frac{\hat{x_2}^{3}}{\hat{x_1}^{2}}.$$
Substituting $\hat{x_3}$and $\hat{x_4}$ with the corresponding expressions in (\ref{quadratic}), and factoring out $\hat{x_2}$ we get
$$\hat{a_3}\left(\frac{\hat{x_2}}{\hat{x_1}}\right)^{2}+\hat{a_2}\left(\frac{\hat{x_2}}{\hat{x_1}}\right)-2\hat{a_1}=0.$$
Since $\hat{a_1}\hat{a_3}=-\hat{a_2}^{2}$ we obtain
$$ \frac{\hat{x_2}}{\hat{x_1}}= \frac{\hat{a_2}}{\hat{a_3}} \left(\frac{-1 \pm \sqrt{7}i}{2}\right)$$
which contradicts (\ref{frac}).

Next, suppose that $\alpha_4>\alpha_3>\alpha_2>\alpha_1$. By comparing the first and fifth components of $v(t)$ we get $$\mathrm{ord}_{t}(a_3(t)x_5(t)x_1(t)) > \mathrm{ord}_t(a_3(t)(2x_1(t)+x_5(t)^{2})$$ because $\mathrm{ord}_t(2x_1+x_5^2)=\alpha_1$ and $\alpha_5>0$. This forces $$\lambda_3+\alpha_1=\min\{\lambda_2+\alpha_4,\lambda_1+\alpha_3\}.$$ Assume $\lambda_3+\alpha_1=\lambda_2+\alpha_4$. Then $\lambda_3>\lambda_2$ because $\alpha_4>\alpha_1$. From the fourth and fifth components of $v(t)$ we get
$\lambda_3+\alpha_1>\lambda_2+\alpha_1$. This forces the orders of each of the summands in the fourth component of $v(t)$ to be the same, i.e. $\alpha_1+\lambda_2=\lambda_3+\alpha_2$. Combining the last two equalities we obtain
$$2\alpha_1=\alpha_2+\alpha_4.$$
However, from (\ref{ord.first}) and (\ref{ord.third}) we have $2\alpha_1=3\alpha_2-\alpha_4$ which yields $\alpha_2=\alpha_4$ contradicting with $\alpha_4>\alpha_2$.

Now suppose that $\lambda_3+\alpha_1=\lambda_1+\alpha_3$. Then $\lambda_3>\lambda_1$. Comparing the third and fifth component of $v(t)$ we get $\lambda_2+\alpha_1=\lambda_2+\alpha_2$. The last two equations yield
\begin{equation}\label{lamb.1}
\lambda_3-\lambda_2=\alpha_3+\alpha_2-2\alpha_1.
\end{equation}
Comparing the fourth and fifth component of $v(t)$ we get
\begin{equation}\label{lamb.2}
\lambda_3-\lambda_2=\alpha_1-\alpha_2.
\end{equation}
Combining (\ref{lamb.1}), (\ref{lamb.2}) and (\ref{ord.first}) we get $\alpha_1=\alpha_3$, a contradiction.

Finally, suppose $\alpha_1=\alpha_2=\alpha_3=\alpha_4$. By similar considerations we get that the orders of the summands in each of the components of $v(t)$ are the same, so we reduce to the first case treated above.
\end{proof}
\begin{cor}\label{str.dep.} The column of partials with respect to $x_5$ is strictly dependent on the module generated by the remaining columns of $\mathrm{Jac}_z(X)$.
\end{cor}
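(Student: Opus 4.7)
The plan is to deduce Corollary 6.2 directly from Proposition 6.1 by dualizing the arc-wise conormal computation carried out there. Strict dependence of $h := \partial F/\partial x_5$ on $M := \langle \partial F/\partial x_j : j\ne 5\rangle\subset\mathcal{O}_X^{p}$ is the requirement that, for every analytic arc $\phi:(\mathbb{C},0)\to(X,0)$, one can write
\[
h\circ\phi \;=\; \sum_{j\ne 5} g_j(t)\,(\partial F/\partial x_j)\circ\phi
\]
with each $g_j(t)\in m_1=(t)\subset\mathcal{O}_1$; equivalently $h\circ\phi\in m_1\cdot\phi^{*}M\cdot\mathcal{O}_1$.

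First I would reduce to the $3$-row submatrix $[K]$ of Proposition~6.1. Because $[K]$ already realizes the generic rank $3$ of the Jacobian at smooth points of $X$, the three deleted rows are $\mathcal{O}_{X_{\mathrm{sm}}}$-linear combinations of the three retained rows on $X_{\mathrm{sm}}$, so strict dependence of the fifth column of the full Jacobian on the remaining four is equivalent to the corresponding statement for the columns of $[K]$. Hence it suffices to prove strict dependence inside $\mathcal{O}_X^{3}$ for the columns of $[K]$.

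Next, I argue by contradiction. Suppose strict dependence fails along some arc $\phi$, so that $(\partial K/\partial x_5)\circ\phi\notin t\cdot\phi^{*}M\cdot\mathcal{O}_1$. Applying the elementary divisor structure of submodules of $\mathcal{O}_1^{3}$ over the DVR $\mathcal{O}_1$ to the pulled-back $3\times 4$ matrix $[(\partial K/\partial x_j)\circ\phi]_{j\ne 5}$, one produces a row vector $a(t)=(a_1(t),a_2(t),a_3(t))\in\mathcal{O}_1^{3}$ and an integer $k\ge 0$ such that
\[
v(t) \;:=\; a(t)\cdot[K](\phi(t)) \;\in\; \mathcal{O}_1^{5}
\]
has fifth component of the form $c\,t^{k}$ with $c\ne 0$ while every other component lies in $(t^{k+1})$. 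Dividing by $t^{k}$ and letting $t\to 0$, $v(t)/t^{k}$ tends to $c\cdot e_5$. But at each generic $t$ the vector $v(t)/t^{k}$ is a rescaled conormal vector of $X$ at the smooth point $\phi(t)$, so $c\cdot e_5$ would be a limiting conormal vector of $X$ at $0$, contradicting Proposition~6.1.

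The main obstacle is the dualization step: producing the explicit $a(t)$ with the stated $t$-order behavior from the assumption that strict dependence fails. This amounts to standard order-of-vanishing bookkeeping in the style already carried out in the proof of Proposition~6.1, combined with the Smith normal form for matrices over $\mathcal{O}_1$; no new arithmetic is required. Once $a(t)$ is in hand, the identification of $c\cdot e_5$ as a limiting conormal vector and hence the contradiction with Proposition~6.1 are immediate.
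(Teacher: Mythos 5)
Your overall strategy---reformulating strict dependence arc-wise and dualizing via the elementary-divisor structure of $\phi^{*}M\subset\mathcal{O}_1^{3}$ over the discrete valuation ring $\mathcal{O}_1$---is the right machinery, and it is essentially how the paper's (unwritten) deduction of the corollary is made precise. But the key dualization step is wrong as you state it. If strict dependence fails along an arc $\phi$, i.e.\ $h\circ\phi\notin m_1\phi^{*}M$, the Smith normal form only yields a row vector $a(t)$ with
$\mathrm{ord}_t\bigl(a(t)\cdot(h\circ\phi)\bigr)\le\mathrm{ord}_t\bigl(a(t)\cdot((\partial F/\partial x_j)\circ\phi)\bigr)$ for all $j\ne5$; it does \emph{not} produce one for which the inequality is strict. (Already for the smooth curve $F=x_1-x_2=0$ in $\mathbb{C}^{2}$, the column $\partial F/\partial x_2$ fails to be strictly dependent on $\partial F/\partial x_1$, yet the unique conormal direction $(1,-1)$ never degenerates to $e_2$; no choice of $a(t)$ gives the strict order gap you claim.) Consequently your limit of $v(t)/t^{k}$ is only a limiting conormal vector with \emph{nonzero} fifth component, not a vector proportional to $e_5$, and you get no contradiction with the statement of Proposition 6.1. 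The two standard dualities are: ``$e_5$ is not a limit of conormal vectors'' is equivalent to $\partial F/\partial x_5\in\overline{M}$ (integral dependence), whereas ``every limiting conormal vector at $0$ has vanishing fifth component'' is equivalent to strict dependence. You have invoked the first when the corollary requires the second.

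The fix is small. The \emph{proof} of Proposition 6.1 establishes more than its statement: it shows that for every arc $\phi$ and every $\mathcal{O}_1$-combination $v(t)=a(t)\cdot[K](\phi(t))$ of the rows, the order of the fifth component of $v(t)$ strictly exceeds the order of at least one of the remaining components---equivalently, every limiting conormal vector at $0$ annihilates $\partial/\partial x_5$. That strict inequality is exactly what contradicts the $a(t)$ your Smith normal form step actually produces (one with $\mathrm{ord}_t(v_5)\le\mathrm{ord}_t(v_j)$ for all $j\ne5$), so the corollary does follow once you quote what the proof of the proposition shows rather than what its statement asserts. Separately, your reduction from the full $6\times5$ Jacobian to $[K]$ needs a sentence more: to transfer strict dependence of the truncated columns back to the full columns you need the three deleted rows to be $\mathcal{O}_X$-linear combinations of the retained ones (so that the deleted components of each column are determined by the retained components), not merely linear combinations pointwise on the smooth locus.
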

Note that $X \cap H_{x_5}$ is the reduced union of $4$ coordinate axes in $\mathbb{C}^{4}$. Therefore, $\chi(X \cap H_{x_5})_s$=3 by the Example to Corollary 1.2.3 in \cite{BG}. The $8$-dimensional family (\ref{8-dim}) is induced by  an irreducible component $V_1$ of the miniversal base space of $(X,0)$. In fact, $V_1$ is the Artin component of $\mathrm{Def}(X,0)$ (see Theorem 3.2 in \cite{Wahl}). By setting $t_i=0$ for $i \neq 1$ in (\ref{8-dim}) we obtain a smoothing of $X$ in $V_1$. Denote the corresponding one-dimensional parameter space by $S_1$. In (\ref{4-dim}) we get another smoothing by setting $t_i=0$ for $i \neq 5$ that lies in a $4$-dimensional component  $V_2$ of the miniversal base space of $(X,0)$. Denote the corresponding parameter space for the smoothing by $S_2$. Finally, by abuse of notation denote by $M$ the relative Jacobian module defined on the total space of miniversal deformations of $(X,0)$.
\begin{Prop}\label{diff. lands.} We have
$$\mathrm{mult}_{S_1}\Gamma_2(M)=8 \ \ \text{and} \ \ \mathrm{mult}_{S_2}\Gamma_2(M)=6.$$
In particular, the Euler characteristics of the generic fibers of the deformations of $(X,0)$ over $S_1$ and $S_2$
are $11$ and $9$.
\end{Prop}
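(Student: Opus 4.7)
The strategy is to apply the polar-Euler formula \eqref{polar-Euler} in both cases and reduce the proposition to computing the two Euler characteristics $\chi(X_{s_1})$ and $\chi(X_{s_2})$. By Proposition 6.1 the hyperplane $H_{x_5}=\{x_5=0\}$ is not a limiting tangent hyperplane to $X$ at the origin, so \eqref{polar-Euler} may be applied with $H=H_{x_5}$ for both smoothing components. The central slice $X\cap H_{x_5}$ is the reduced union of the four coordinate lines in $\mathbb{C}^4$, and the quoted result from \cite{BG} already yields $\chi((X\cap H_{x_5})_s)=3$ regardless of which component we work in. Consequently the polar multiplicity statements $\mathrm{mult}_{S_1}\Gamma_2(M)=8$ and $\mathrm{mult}_{S_2}\Gamma_2(M)=6$ are equivalent to the Euler-characteristic statements $\chi(X_{s_1})=11$ and $\chi(X_{s_2})=9$, respectively, so it suffices to establish the latter.

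For $\chi(X_{s_1})$ I would invoke the fact that $V_1$ is the Artin component of the miniversal base of $(X,0)$ (\cite[Theorem 3.2]{Wahl}). A classical theorem of Artin asserts that a smoothing over the Artin component of a rational surface singularity has Milnor fibre diffeomorphic to the minimal resolution $\widetilde{X}$ of $(X,0)$, so $\chi(X_{s_1})=\chi(\widetilde{X})$. The next step is to produce the minimal resolution of $(X,0)$ directly from the presentation matrix \eqref{central fiber} (the singularity is rational and quasi-homogeneous, so the weighted dual graph can be computed from the standard algorithm for rational double/triple cover type singularities) and to read off its Euler characteristic from the configuration of exceptional $\mathbb{P}^1$'s; for the specific graph associated with $(X,0)$ this yields $\chi(\widetilde{X})=11$.

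For $\chi(X_{s_2})$ the analysis is more delicate, because $V_2$ corresponds to deformations through the symmetric $3\times 3$ determinantal presentation and is not the Artin component. The smoothing is obtained by setting $t_5\neq 0$ in \eqref{4-dim} and the remaining parameters to zero. I would compute $\chi(X_{s_2})$ by an independent argument: one option is to combine Corollary~\ref{str.dep.} (which allows us to drop the column of partials with respect to $x_5$ in the relative Jacobian module computation) with an explicit description of the relative polar curve of a one-parameter refinement of the $4$-dimensional family, yielding the polar multiplicity $6$ directly; alternatively, one can describe $X_{s_2}$ as the symmetric-matrix Milnor fibre associated with the perturbed symmetric matrix and compute its Euler characteristic using a Lefschetz-type argument on the determinantal stratification, in the spirit of the calculations for $2\times 3$ determinantal surfaces used in Section 5 for the Damon-Pike examples. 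Either route gives $\chi(X_{s_2})=9$, and the proposition follows by combining with the reduction of the first paragraph.

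The main obstacle is this second step: the Artin-component case is a direct application of a classical theorem, but the identification of the Milnor fibre topology of the non-Artin smoothing requires an independent and genuinely different computation. Indeed the entire point of the example is that the two landscapes of $(X,0)$ produce topologically distinct generic elements even though the hyperplane-slice Milnor fibre, and hence $\chi((X\cap H_{x_5})_s)$, is the same in both cases; the discrepancy is recorded precisely by the difference $\mathrm{mult}_{S_1}\Gamma_2(M)-\mathrm{mult}_{S_2}\Gamma_2(M)=2$.
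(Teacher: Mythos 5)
Your first paragraph is a correct (and legitimate) reduction: since Proposition 6.1 guarantees $H_{x_5}$ is not a limiting tangent hyperplane and $\chi((X\cap H_{x_5})_s)$ is the same over both components, formula (\ref{polar-Euler}) makes the two polar multiplicities equivalent to the two Euler characteristics. But you then run the implication in the opposite direction from the paper, and that is where the proof stops being a proof. Neither Euler characteristic is actually computed. For $S_1$ you invoke the Artin/simultaneous-resolution theorem (which does apply, since $V_1$ is the Artin component) to reduce to $\chi(\widetilde X)$, but you never produce the resolution graph of $(X,0)$; the sentence ``for the specific graph associated with $(X,0)$ this yields $\chi(\widetilde X)=11$'' is an assertion, not a computation, and it is exactly the content that needs to be verified. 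For $S_2$ the situation is worse: you offer two alternative strategies in outline and conclude ``either route gives $\chi(X_{s_2})=9$'' without carrying out either one. Since the entire point of the example is that the non-Artin smoothing has different topology, leaving that computation unperformed means the proposition is not established. (Your proposed use of Corollary \ref{str.dep.} is also not what that corollary does: in the paper it certifies that a specific codimension-$4$ plane misses the conormal $C(X)$, i.e.\ that a $4$-generator submodule is a reduction of $M(0)$; it does not by itself let you ``drop the $x_5$-column'' and read off a polar curve.)

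For contrast, the paper proves the numbers $8$ and $6$ directly and only afterwards deduces the Euler characteristics $11$ and $9$ from (\ref{polar-Euler}). Concretely: it replaces $M$ by the smaller module $K$ (same $\mathrm{Proj}$ of the Rees algebra away from $\mathbb{V}(x_1x_5)$), computes with Singular an explicit generating set for the kernel of the presentation $\mathcal{O}_{\mathcal{X}_{S_1}}[z_1,\dots,z_5]\to\mathcal{R}(M)$, intersects with the explicit codimension-$4$ plane $H_4=\mathbb{V}(z_1+z_5,z_2+z_5,z_3,z_4+z_5)$ --- chosen so that, by Corollary \ref{str.dep.}, $H_4$ misses $C(X)$ --- exhibits the intersection with the fiber over $0$ as an explicit $8$-dimensional vector space, and concludes by conservation of number; a second, Fitting-ideal computation confirms $14-6=8$. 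If you want to salvage your route, you must either compute the minimal resolution of $(X,0)$ (and, separately, the topology of the $V_2$-smoothing, for which no Artin-type theorem is available), or fall back on a direct polar computation of the kind the paper performs. As written, the proposal identifies a plausible strategy but contains no proof of either equality.
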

\begin{proof} As the computations for the two polar multiplicities are similar in nature, here we present only the computation of $\mathrm{mult}_{S_1}\Gamma_2(M)$. Denote by $\mathcal{X}_{S_1}$ the total space of the smoothing over $S_1$. Our first task is to find the equations of $\mathcal{R}(M)$ as a subvariety of $ \mathcal{X}_{S_1} \times \mathbb{P}^{4}$. Consider the map
\begin{equation}\label{Rees map}
\alpha : \mathcal{O}_{\mathcal{X}_{S_1}}[z_1,\ldots,z_5] \longrightarrow \mathcal{R}(M)
\end{equation}
where
$$z_i \rightarrow m_i$$
and the $m_i$ are the generators for $M$ corresponding to the columns of partials of the relative Jacobian matrix.
Finding the kernel of this map is computationally impossible even for software packages like Singular \cite{Singular}. However, we can reduce the complexity of this task by replacing $M$ with simpler module. Let $K$ denote the module generated by the columns of the $3 \times 5$ matrix obtained from $\mathrm{Jac}_z(\mathcal{X}_{S_1})$ whose specialization to $(X,0)$ is the matrix $[K]$ from (\ref{module K}). Observe that
$$\mathrm{Proj}(\mathcal{R}(M)) = \mathrm{Proj}(\mathcal{R}(K)).$$
Indeed, if $\alpha'$ denotes the map in (\ref{Rees map}) with $M$ replaced by $K$, then $\mathrm{Ker}(\alpha)=\mathrm{Ker}(\alpha')$ over the Zariski open dense subset $\mathrm{Spec}(\mathcal{O}_{\mathcal{X}_{S_1}})\setminus \mathbb{V}(x_5x_1)$.
Using Singular we find a minimal generating set for $\mathrm{Ker}(\alpha):$
\begin{displaymath}
\begin{split}
&(8x_4^2t+2x_2t^2+2t^3-2x_1x_4x_5^2-8x_3x_5^2t)z_1+(8x_1x_4t-2x_1^2x_5^2)z_2+\\
&(8x_2x_4t+6x_4t^{2}-4x_1x_2x_5^2+2x_3x_4x_5^2-2x_1x_5^2t)z_3+\\
&(8x_3x_4t+4x_1t^2-6x_1x_3x_5^2+4x_4^2x_5^2-2x_2x_5^2t+2x_5^2t^2)z_4+\\
&(3x_3x_5t+3x_4x_5^3)z_5,\\
&(16x_2x_4+4x_4t+2x_1x_5^2)z_1+(16x_3x_4-4x_2x_5^2)z_2+\\
&(16x_4^{2}-4x_2-10x_3x_5^2)z_3+(16x_1x_4-8x_3t)z_4-(4x_1x_5z_5+3x_5^3z_5)z_5, \\
&(8x_3x_4+2x_1t-8x_2x_5^2)z_1+(8x_4^2-8x_3x_5^2)z_2+(8x_1x_4-2x_3t)z_3+\\
&(8x_2x_4+4x_4t)z_4+(4x_2x_5+x_5t)z_5,\\
&(8x_2x_5+2x_5t)z_1+(6x_3x_5)z_2+(4x_4x_5)z_3+(2x_1x_5+2x_5^3)z_4-(4x_2+t)z_5,\\
&(2x_1x_5)z_1-(2x_3x_5)z_3-(4x_4x_5)z_4-(4x_1+3x_5^2)z_5.
\end{split}
\end{displaymath}
Let $M(0)$ be the image of $M$ in $F(0)$, where $F(0)$ is the restriction of the free module $F$ of rank $6$ that
contains $M$ to the closed fiber $(X,0)$. The map from $M$ to $M(0)$ induces a map on Rees algebras
$$\beta : \mathcal{R}(M) \rightarrow \mathcal{R}(M(0)).$$
Set $\delta = \beta \circ \alpha$. The map $\delta$ realizes $\mathrm{Proj}(\mathcal{R}(M(0)))$, which is the conormal space $C(X)$ of $(X,0)$, as a subspace of $\mathrm{Proj}(\mathcal{R}(M))$.
Let $J$ be the ideal in $\mathcal{O}_{\mathcal{X}_{S_1}}[z_1, \ldots, z_5]$ generated by $z_1+z_5,z_2+z_5,z_3,z_4+z_5$. Denote by $H_4$ the codimension $4$ hyperplane defined by $J$. Set $M_r := \alpha(J)$. We claim that $H_4$ does not intersect $C(X)$ after possibly replacing $(X,0)$ by a smaller representative. Indeed, by Corollary \ref{str.dep.}, the submodule $M_r(0)$ of $M(0)$ generated by the images of the the generators of $J$ under $\delta$ is a reduction of $M(0)$. Hence,
$M_r(0)M(0)^{q}=M(0)^{q+1}$ for $q \gg 0$, i.e. $\delta(J)$ is the irrelevant ideal in $\mathcal{R}(M(0))$.
The intersection of $H_4$ with the fiber of $\mathrm{Proj}(\mathcal{R}(M))$ over $0$ is $0$-dimensional scheme supported at $0 \in \mathcal{X}_{S_1}$. In fact,
$$ \mathcal{O}_{\mathcal{X}_{S_1}}[z_1,\ldots,z_5]/\langle \mathrm{Ker}(\alpha),J,t \rangle \cong \mathbb{C}\{1,x_3,x_4,x_5,x_3x_5,x_4x_5,x_5^2,x_5^3 \}.$$
Therefore, the degree of the component of maximal dimension of the fiber of $\mathrm{Proj}(\mathcal{R}(M))$ over $0$  is $8$ as $H_4$ misses all components of lower dimension. By conservation of number (see Proposition 10.2 in \cite{Fulton}) applied to the the proper map
$$\mathrm{Proj}(\mathcal{R}(M)) \cap H_4 \rightarrow S_1$$
we get $\mathrm{mult}_{S_1}\Gamma_2(M)=8$.
Alternatively, observe that $$\mathrm{Proj}(\mathcal{R}(M)) \cap H_4 = \mathrm{Proj}(\mathcal{R}(M)/M_r\mathcal{R}(M)).$$
Thus, $$\Gamma_2(M) = \mathrm{Supp}_{\mathcal{O}_{\mathcal{X}_{S_1}}}(M/M_r).$$
Next we claim that
\begin{equation}\label{polar red}
\mathrm{Supp}_{\mathcal{O}_{\mathcal{X}_{S_1}}}(M/M_r) =\mathbb{V}(\mathrm{Fitt}_{3}(F/M_r)),
\end{equation}
where $\mathrm{Fitt}_{3}(F/M_r)$ is the $3$rd Fitting ideal of $F/M_r$. Off $C(M)$, the non-free locus of $M$, the quotient $M/M_r$ is supported precisely at points where the rank of $M_r$ drops. Thus, $\mathrm{Supp}_{\mathcal{O}_{\mathcal{X}_{S_1}}}(M/M_r)$ is equal to the Zariski closure of $\mathbb{V}(\mathrm{Fitt}_{3}(F/M_r)) \setminus C(M).$ However, $M$ is free off $0 \in X$ because $X$ smoothens over $S_1$. Hence, $C(M)=0$, which proves (\ref{polar red}).

There is an alternative approach for computing $\mathrm{mult}_{S_1}\Gamma_2(M)$. Namely, using (\ref{polar red}) we can express $\mathrm{mult}_{S_1}\Gamma_2(M)$ as the difference
\begin{displaymath}
\dim_{\mathbb{C}} \mathbb{C}[x_1,\ldots,x_5]/\langle \mathrm{Fitt}_{3}(F/M_r),t-\epsilon,I \rangle - \dim_{\mathbb{C}} \mathbb{C}[x_1,\ldots,x_5]/I'
\end{displaymath}
where $\varepsilon$ is a generic constant, $I$ is the ideal that defines the total space of the deformation over $S_1$ and
$$I':= \bigcup_{i=0}^{\infty}(\langle \mathrm{Fitt}_{3}(F/M_r),t,I \rangle :_{\mathbb{C}[x_1,\ldots,x_5]} \mathfrak{m_{0}}^{i})$$
where $\mathfrak{m}_0$ is the maximal ideal generated by the $x_i$. The first term computes globally the degree of the covering  $\Gamma_2(M) \rightarrow S_1$, and the second term computes the number those branches that do not pass through the origin. One checks easily that $\dim_{\mathbb{C}} \mathbb{C}[x_1,\ldots,x_5]/\langle \mathrm{Fitt}_{3}(F/M_r),t-\epsilon,I \rangle =14$, $\dim_{\mathbb{C}} \mathbb{C}[x_1,\ldots,x_5]/I'=6$, and hence once again $\mathrm{mult}_{S_1}\Gamma_2(M)=14-6=8$.

A similar computation yields $\mathrm{mult}_{S_1}\Gamma_2(M)=6$. The part about the Euler characteristics follows at once by applying (\ref{polar-Euler}).
\end{proof}
Consider the family of singularities defined by the maximal minors of the following matrix
\begin{equation}
\begin{pmatrix}\label{W.8-dim}
x_1 & x_2 & x_3 &  x_4 \\
x_2 & x_3 & x_4 & x_1+x_5^{2}+tx_3
\end{pmatrix}.
\end{equation}
Denote by $Y_1$ the corresponding one-dimensional parameter space in $V_1$.
Another deformation of $(X,0)$, this time in $V_2$ is given by the vanishing of the $2$ by $2$ minors of the following matrix
\begin{equation}
\begin{pmatrix}\label{W.4-dim}
x_1 & x_2 & x_3 \\
x_2 & x_3 & x_4\\
x_3 & x_4 & x_1+x_5^{2}+tx_4
\end{pmatrix}.
\end{equation}
Denote by $Y_2$ the corresponding one-dimensional parameter space in $V_2$. Assume that the generic fibers of these two families deform to smooth fibers over $S_{i}'$ where $S_{i}'\subset V_i$. For a closed subspace $Y$ of the base space of miniversal deformations, denote by $\mathfrak{m}_Y$ the ideal of $Y$. Next we prove that the two families are Whitney equisingular, however, as the corresponding two landscapes of $(X,0)$ are distinct, the invariants of $(X,0)$ will be different.
\begin{Prop} We have
\begin{equation}\label{first fam}
\mathrm{mult}_{S_1}\Gamma_2(\mathfrak{m}_{S_1}M)=
\mathrm{mult}_{S_{1}'}\Gamma_2(\mathfrak{m}_{S_{1}'}M)=78
\end{equation}
and
\begin{equation}\label{second fam}
\mathrm{mult}_{S_2}\Gamma_2(\mathfrak{m}_{S_2}M)= \mathrm{mult}_{S_{2}'}\Gamma_2(\mathfrak{m}_{S_{2}'}M)=76.
\end{equation}
In particular, (\ref{W.8-dim}) and (\ref{W.4-dim}) are Whitney equisingular.
\end{Prop}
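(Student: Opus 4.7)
The plan is to compute each of the four polar multiplicities by adapting the Singular-assisted procedure of Proposition 6.3 to the module $\mathfrak{m}_S M$, and then to conclude Whitney equisingularity by invoking Theorem 5.2A. A matrix of generators for $\mathfrak{m}_S M$ is obtained from $M$ by multiplying each of its five columns by the six generators of $\mathfrak{m}_S = (x_1, \ldots, x_5, t)$, producing thirty generators in the free module $F$ of rank $6$. By Corollary 6.2, the column of $\mathrm{Jac}_z$ corresponding to $x_5$ is strictly dependent on the remaining four, so $\mathrm{Proj}(\mathcal{R}(\mathfrak{m}_S M)) = \mathrm{Proj}(\mathcal{R}(\mathfrak{m}_S K))$, with $K$ the module of Proposition 6.3 defined by the matrix $[K]$. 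This cuts the number of generators to $24$ and makes the Gr\"obner computation feasible.

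Next, mirroring Proposition 6.3, I would run Singular to produce a minimal generating set for the kernel of the Rees map $\mathcal{O}_{\mathcal{X}_S}[z_{i,j}] \to \mathcal{R}(\mathfrak{m}_S K)$, and then intersect the resulting scheme $\mathrm{Proj}(\mathcal{R}(\mathfrak{m}_S M))$ with a codimension $d+g-1 = 4$ hyperplane $H_4$ whose defining linear forms project, modulo $t$, to a reduction of the special fibre of $\mathfrak{m}_0 M(0)$; the strict dependence of Corollary 6.2 again guarantees such a reduction exists and avoids the conormal $C(X)$. The polar multiplicity is then the difference of two Fitting-ideal colengths, one computed at a generic $t = \varepsilon$, the other at the branches of the relative polar that avoid the origin, exactly as in Proposition 6.3. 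Carrying this out for the four families over $S_1, S_{1}', S_2, S_{2}'$ yields the integers $78, 78, 76, 76$. Conceptually, the equality $\mathrm{mult}_{S_1}\Gamma_2(\mathfrak{m}_{S_1} M) = \mathrm{mult}_{S_{1}'}\Gamma_2(\mathfrak{m}_{S_{1}'} M)$ can also be read as an instance of Theorem 2.8: both numbers compute the landscape invariant $e(\mathfrak{m}_0 JM(X), N_D(X)) + \mathrm{mult}_0 \Gamma_2(N_D)$ attached to the single smoothing component $V_1$, and analogously for $V_2$.

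Finally, to establish Whitney equisingularity of (\ref{W.8-dim}) and (\ref{W.4-dim}) I would apply Theorem 5.2A with $Y$ equal to the parameter axis. Direct inspection of the two presentation matrices shows that the ideals of entries cut out only the $t$-axis, so the singular set of each family is exactly $Y$ and the hypothesis of 5.2A is met; the equalities (\ref{first fam})--(\ref{second fam}) then say precisely that $e_\Gamma(\mathfrak{m}_y JM(F_y), N_D(y))$ is independent of $y \in S_i$, with the value at the special fibre supplied by $\mathrm{mult}_{S_i}\Gamma_2(\mathfrak{m}_{S_i} M)$ and at a generic fibre by $\mathrm{mult}_{S_{i}'}\Gamma_2(\mathfrak{m}_{S_{i}'} M)$. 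Theorem 5.2A then yields condition $W$ for both families. The main obstacle is the Gr\"obner computation: the module $\mathfrak{m}_S M$ has substantially more generators than $M$, and without the reduction to $\mathfrak{m}_S K$ and a judicious choice of hyperplane $H_4$ so that the resulting $0$-dimensional colengths can be read off directly in $\mathbb{C}\{x_1, \ldots, x_5\}$, the Singular computation is unlikely to terminate in reasonable time.
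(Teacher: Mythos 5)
Your logical skeleton for the Whitney conclusion matches the paper's: the equalities $\mathrm{mult}_{S_i}\Gamma_2(\mathfrak{m}_{S_i}M)=\mathrm{mult}_{S_i'}\Gamma_2(\mathfrak{m}_{S_i'}M)$ are, via Theorem 2.8, equivalent to the vanishing of $\mathrm{mult}_{Y_i}\Gamma_2(\mathfrak{m}_{Y_i}M)$, and that vanishing gives condition W (the paper cites Teissier directly rather than routing through Theorem 5.2A, but the content is the same). The genuine gap is in the middle: the numbers $78$ and $76$ are never actually derived. You defer them to a direct Gr\"obner computation of $\mathrm{Proj}\,\mathcal{R}(\mathfrak{m}_S M)$ on a $30$-generator module, which you yourself flag as unlikely to terminate; the paper avoids this entirely. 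Its key move is the Multiplicity-Polar Theorem applied to the pair $(\mathfrak{m}_{S_i}M, M)$, which gives
$$\mathrm{mult}_{S_i}\Gamma_2(\mathfrak{m}_{S_i}M)=e(\mathfrak{m}_0M(0),M(0))+\mathrm{mult}_{S_i}\Gamma_2(M),$$
followed by Kleiman--Thorup's Theorem 9.8(i), which expresses $e(\mathfrak{m}_0M(0),M(0))$ as $\binom{4}{1}$ and $\binom{4}{2}$ times the classical polar multiplicities $m^1(X)$ and $m^2(X)$ of the central fibre. These are cheap to compute ($m^2(X)=4$ from the determinantal structure, $m^1(X)=9$ from maximal minors of three generic combinations of the generators of $M$), giving $70$, and the second summand is already known from Proposition 6.3 to be $8$ (resp.\ $6$). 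Hence $78=70+8$ and $76=70+6$, with the term $70$ independent of the smoothing component. Without this decomposition your sketch contains no derivation of the two integers that constitute the statement.

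A secondary problem: your ``conceptual reading'' that $\mathrm{mult}_{S_1}\Gamma_2(\mathfrak{m}_{S_1}M)$ and $\mathrm{mult}_{S_1'}\Gamma_2(\mathfrak{m}_{S_1'}M)$ both compute the same landscape invariant $e(\mathfrak{m}_0 JM(X),N_D(X))+\mathrm{mult}\,\Gamma_2(N_D)$ of $(X,0)$ is circular. The second quantity is the corresponding invariant of the \emph{generic} member $X_t$ of the family over $Y_1$, not of $(X,0)$; their equality is exactly what Theorem 2.8 converts into the emptiness of $\Gamma_2$ over $Y_1$, i.e.\ it is the thing to be proven, not a formal identity. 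Both numbers must actually be computed (the paper does the three remaining computations ``in exactly the same manner,'' replacing $(X,0)$ by the generic fibre where appropriate).
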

\begin{proof} First, note that the only singular points in the members of the two families lie on the corresponding parameter spaces. To prove that the two families are Whitney equisingular, or satisfy condition Whitney B, it suffices to show that $\mathrm{mult}_{Y_i}\Gamma_2(\mathfrak{m}_{Y_i}M)$ vanish for $i=1,2$ (see \cite{T-2}). By Theorem \ref{covering argument} applied with $Y=Y_i$, $Z_0=S_i$ and $Z_y=S_{i}'$, we have that $\mathrm{mult}_{Y_i}\Gamma_2(\mathfrak{m}_{Y_i}M)$ vanish if and only if the first equalities in (\ref{first fam}) and (\ref{second fam}) hold. Below we carry the computation for $\mathrm{mult}_{S_1}\Gamma_2(\mathfrak{m}_{S_1}M)$ only as the rest of the computations are performed in exactly the same manner. By the Multiplicity-Polar Theorem
\begin{equation}\label{two polars}
\mathrm{mult}_{S_1}\Gamma_2(\mathfrak{m}_{S_1}M) = e(\mathfrak{m}_{0}M(0),M(0))+\mathrm{mult}_{S_1}\Gamma_2(M)
\end{equation}
On the other hand, by Theorem 9.8 (i) from \cite{KT} it follows that
\begin{equation}\label{KT}
e(\mathfrak{m}_{0}M(0),M(0))= \binom{4}{1}m^{1}(X)+\binom{4}{2}m^{2}(X)
\end{equation}
where $m^{1}(X)$ is the multiplicity of the polar curve of $X$ and $m^{2}(X)$ is the multiplicity of $(X,0)$.
Since $(X,0)$ is a rational determinantal singularity defined by the vanishing of the maximal minors of a $2$ by $4$ matrix, it follows that $m^{2}(X)=4$ by \cite{Wahl}. Following the approach outlined at the end of the proof of Proposition \ref{diff. lands.} we can compute the polar curve of $(X,0)$ as the zero locus of the maximal minors of the submodule of $M$ obtained by taking $3$ general linear combinations of the generators for $M$. Indeed, the maximal minors define a determinantal variety, hence reduced, as the matrix is of the right size and $(X,0)$ is Cohen-Macaulay. An easy computations shows that $m^{1}(X)=9$.

 Thus,
$$e(\mathfrak{m}_{0}M(0),M(0))=4\cdot 4+6\cdot 9=70.$$
Hence, by Proposition \ref{diff. lands.} and (\ref{two polars}) it follows that
$$\mathrm{mult}_{S_1}\Gamma_2(\mathfrak{m}_{S_1}M)=70+8=78.$$
\end{proof}
Note that these computations can be done using the presentation matrix only as shown in Theorem 5.2.

 \end{document}